\let\mathcal\mathscr
\def\C{{\mathbb C}}
\def\bP{{\mathbb P}}
\def\bN{{\mathbb N}}
\def\Q{{\mathbb Q}}
\def\Z{{\mathbb Z}}
\newtheorem{thm}{Theorem}[section]
\def\Z{{\bf Z}}
\def\P{{\bf P}}
\def\R{{\bf R}}
\def\Q{{\bf Q}}
\def\C{{\bf C}}
\def\Spec{\mathop{\rm Spec}\nolimits}
\def\tilde{\widetilde}
\def\phi{\varphi}
\def\a{{\alpha}}
\def\cB{{\mathcal B}}
\def\cC{{\mathcal C}}
\def\cD{{\mathcal D}}
\def\cH{{\mathcal H}}
\def\cL{{\mathcal L}}
\def\cM{{\mathcal M}}
\def\cO{{\mathcal O}}
\def\cU{{\mathcal U}}
\def\cV{{\mathcal V}}
\def\cX{{\mathcal X}}
\def\cY{{\mathcal Y}}
\def\div{\mathop{\rm div}\nolimits}
\def\Card{\mathop{\rm Card}\nolimits}
\numberwithin{equation}{section}
\newtheorem{theorem}[thm]{Theorem}
\newtheorem{conjecture}[thm]{Conjecture}
\newtheorem{corollary}[thm]{Corollary}
\newtheorem{definition}[thm]{Definition}
\newtheorem{lemma}[thm]{Lemma}
\newtheorem{proposition}[thm]{Proposition}
\newtheorem{remark}[thm]{Remark}
\newtheorem{Fact}[thm]{Fact}
\begin{document}

\title {Transcendental Liouville inequalities on projective varieties}
\author{Carlo Gasbarri}\address{Carlo Gasbarri, IRMA, UMR 7501
 7 rue Ren\'e-Descartes
 67084 Strasbourg Cedex }
 \email{gasbarri@math.unistra.fr}

\date\today
\thanks{Research supported by the project ANR-16-CE40-0008 FOLIAGE}


\begin{abstract} Let $p$ be an algebraic point of a projective variety $X$ defined over a number field. Liouville inequality tells us that the norm  at $p$ of a non vanishing integral  global section of an hermitian  line bundle over $X$ is either zero or it cannot be too small with respect to the $\sup$ norm of the section itself. 
We study inequalities similar to Liouville's for subvarietes and for transcendental points of a projective variety defined over a number field. We prove that almost all transcendental points verify a good inequality of Liouville type. We also relate our methods to a (former) conjecture by Chudnovsky and give two applications to the growth of  the number of rational points of bounded height on the image of an analytic map from   a disk to a projective variety.
\end{abstract}

\maketitle

\tableofcontents
\section {Introduction}

An important tool in diophantine geometry and in transcendental theory is the so called {\it Liouville inequality}. In its simplest form, it may be stated in the following way: Let $n\in\Z$ and $P(z)\in\Z[z]$ be a polynomial, then $P(n)\neq 0$ implies that $|P(n)|\geq 1$. This can be generalized to any algebraic number and also to any algebraic point of a projective variety: Suppose that $\cX\to\Spec(\Z)$ is a projective arithmetic scheme and  $\cL$ is an hermitian ample line bundle on it.  In this introduction, in order to simplify notations, we suppose that everything is defined over $\Z$ (the case of varieties defined over arbitrary number fields will be treated in the paper). If $s\in H^0(\cX, \cL^d)$, we denote by $\Vert s\Vert$  the supremum of $\Vert s\Vert(z)$ on $\cX(\C)$. 

Liouville inequality tells us that, if $x\in\cX(\overline{\Q})$ is an algebraic point, we can find a positive constant $A$ such that, for every positive integer $d$ and every $s\in H^0(\cX,\cL^d)$ such that $s(x)\neq 0$ we have $\log\Vert s\Vert (x)\geq -A(\log\Vert s\Vert +d)$ cf. Theorem \ref{Liouvillethm}. 

In section 3 we show that a similar inequality holds for sub varieties: Denote by $X$ the generic fibre of the projective arithmetic scheme $\cX$ fixed above. For every closed sub variety  $Y$ of $X(\C)$ and every global section $s\in H^0(\cX, \cL^d)$, we denote by $\Vert s\Vert_Y$ the supremum of $\Vert s\Vert$ on $Y$. In section 3 we prove, 
\begin{theorem} (cf. Theorem \ref{liouville2}) Let $Y$ be a  Zariski closed sub variety of $X(\C)$ which is defined over $\overline{\Q}$, then we can find a constant $A$ , depending only on $Y$ and the couple $(\cX,\cL)$, such that, for every positive integer $d$ and every $s\in H^0(\cX, \cL^d)$ which do not vanish identically on $Y$ we have
$$\log\Vert s\Vert_Y\geq -A(\log\Vert s\Vert +d).$$
\end{theorem}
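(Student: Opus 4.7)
The strategy is to reduce the subvariety inequality to the point case (Theorem \ref{Liouvillethm}) by evaluating $s$ at a carefully chosen algebraic point of $Y$. We may assume $Y$ is irreducible and defined over a number field $K$; let $\cY\subset\cX_{\cO_K}$ be its Zariski closure (an arithmetic subvariety), and write $\cL_\cY:=\cL|_\cY$. For any $y\in Y(\overline{\Q})$ at which $s$ does not vanish, one has trivially $\log\Vert s\Vert_Y\geq \log\Vert s\Vert(y)$, and Theorem \ref{Liouvillethm} yields $\log\Vert s\Vert(y)\geq -A(y)(\log\Vert s\Vert+d)$. The task thus reduces to producing, for every $s\in H^0(\cX,\cL^d)$ with $s|_\cY\not\equiv 0$, an algebraic point $y=y_s\in Y(\overline{\Q})$ with $s(y_s)\ne 0$ such that the Liouville constant $A(y_s)$ at $y_s$ is bounded by some $A$ depending only on $Y$ and $(\cX,\cL)$.

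To construct $y_s$, I would use a Noether normalization $\pi\colon Y\to\bP^n_K$ of generic degree $r$, where $n=\dim Y$, defined over $K$. Associated to any nonzero $s|_\cY$ is the norm section $F_s:=N_\pi(s|_\cY)\in H^0(\bP^n_{\cO_K},\det\pi_*\cL^d\otimes(\det\pi_*\cO_\cY)^{-1})$, a nonzero integral section of a line bundle whose degree is linear in $d$ (the first Chern class of $\pi_*\cL^d$ is linear in $d$ by Grothendieck--Riemann--Roch). On the \'etale locus of $\pi$ one has $|F_s(Q)|=\prod_{y\in\pi^{-1}(Q)}\Vert s\Vert(y)$ up to metric constants, so $F_s(Q)\ne 0$ exactly when $s$ does not vanish at any preimage $y\in\pi^{-1}(Q)$. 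For such a $Q$, the algebraic points $y\in\pi^{-1}(Q)$ have degree and height controlled by those of $Q$, so any one of them can serve as $y_s$, provided the Liouville constant at $Q$ is bounded.

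The main obstacle is precisely to select $Q$ with $F_s(Q)\ne 0$ and with $A(Q)$ uniformly bounded in $s$. Since $F_s$ varies with $s$, and the hypersurface $\{F_s=0\}$ in $\bP^n$ can contain any fixed $Q$ for $d$ sufficiently large, $Q$ must depend on $s$. I would work around this by applying the point Liouville inequality to $F_s$ on $\bP^n$ rather than to $s$ on $\cX$: at $Q$, Theorem \ref{Liouvillethm} gives $\log|F_s(Q)|\geq -A(Q)(\log\Vert F_s\Vert_{\sup}+O(d))$, and combined with the bounds $\log\Vert F_s\Vert_{\sup}\leq r\log\Vert s\Vert+O(1)$ and $\log\Vert s\Vert_Y\geq \max_i\log\Vert s\Vert(y_i)\geq \tfrac{1}{r}\log|F_s(Q)|$ (the maximum dominating the average), one deduces $\log\Vert s\Vert_Y\geq -A'(\log\Vert s\Vert+d)$ for an $A'$ depending on $Y$, $(\cX,\cL)$ and $A(Q)$. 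The remaining uniformity of $A(Q)$ may be obtained by a counting argument for algebraic points of controlled height in $\bP^n(\overline{\Q})$ avoiding a hypersurface of degree $O(d)$, or alternatively by induction on $\dim Y$ via $\overline{\Q}$-rational hyperplane sections $Y\cap H$ (with the base case $\dim Y=0$ being immediate from the point Liouville inequality applied to the finite set of algebraic points of $Y$).
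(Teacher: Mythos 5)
Your reduction hinges on finding, for each section $s$, an algebraic point $y_s\in Y(\overline{\Q})$ with $s(y_s)\neq 0$ whose Liouville constant $A(y_s)$ is bounded independently of $s$. This cannot be done as a black-box invocation of Theorem \ref{Liouvillethm}: by Corollary \ref{Liouville and heights} (and its proof) the constant $A(y_s)$ carries a factor $[K(y_s):\Q]$, and by Northcott's theorem $Y$ has only finitely many algebraic points with $h_\cL\le C$ and $[K(y_s):\Q]\le D$ for any fixed $C,D$ --- so once $\deg s$ is large enough, the divisor $\{s|_Y=0\}$ can contain all of them, forcing $\deg(y_s)$, hence $A(y_s)$, to grow. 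The same obstruction hits your point $Q\in\bP^n$ and $F_s$; a ``counting argument for points of controlled height avoiding a hypersurface of degree $O(d)$'' cannot fix it, precisely because of Northcott. So the proposal has a genuine gap at its central step.

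The paper resolves this with two ingredients, the second of which you are missing. First (Lemma \ref{bounded height}): there is a Zariski-dense set $S\subset Y(\overline K)$ of uniformly \emph{bounded height} --- take points of $\bP^{\dim Y}$ with roots-of-unity coordinates and pull them back along a finite $K$-morphism $Y\to\bP^{\dim Y}$; density gives some $p\in S$ with $s(p)\neq 0$. These $p$ have bounded height but unbounded degree, so the second ingredient is essential: one does not apply Theorem \ref{Liouvillethm} as a black box, but reruns its proof and bounds the \emph{entire} sum $\sum_i\log\Vert s\Vert_{\sigma_0}(p_i^{\sigma_0})$ over the $\sigma_0$-conjugates of $p$ by $[K(p):K]\cdot\log\Vert s\Vert_{Y,\sigma_0}$, which is legitimate exactly because every conjugate of $p$ lies on $Y_{\sigma_0}$. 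The factor $[K(p):K]$ then cancels against $[K(p):\Q]=[K(p):K]\,[K:\Q]$, yielding a constant depending only on $[K:\Q]$ and $\sup_{p\in S}h_\cL(p)$. Incidentally, once this is in place, the Noether-normalization and norm-section machinery in your sketch is an unnecessary detour: the argument runs directly on $Y$.
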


Usually, In many diophantine proofs, Liouville inequality is used,  together with a  Siegel lemma, some form of Schwartz inequalities and  Zero Lemmas. The Zero Lemmas are used to ensure that the section we are dealing with do not vanish on involved algebraic point. Siegel lemma and Schwartz inequalities provide upper bounds for the value of the norm of a section in the point and Liouville inequality provides a lower bound of the same.  Even if we can find in the literature many effective versions of Zero Lemmas, when one applies them, usually the price to pay is the effectivity of the statement:  In order to apply the Zero Lemmas often it is required that the points we are dealing with have some special properties which can be supposed to hold just because the strategy of proof is by contradiction. for instance, in the actual proofs, the effectivity  of Roth theorem is lost in this way. If one could replace some of the involved algebraic points by transcendental points, probably we could avoid the the use of Zero Lemmas  and we could gain in effectivity. Unfortunately this cannot be done directly because, when one deals with transcendental points, Liouville inequality is not available anymore. Never the less,  the last section of this paper, we will see some applications of these ideas. 

In this paper we will study some weak form of Liouville inequality which holds for "many" points of an algebraic variety. 

In section 5 we  study  inequalities similar to Liouville inequality which are  in the spirit of the inequalities proposed by Chudnovsky, cf. \cite{CHUDNOVSKY}. 

If $P(z)\in \Z[z_1,\dots , z_N]$ is a polynomial, we will denote by $\Vert P\Vert$ the maximum of the absolute values of its coefficients. 

In the paper \cite{CHUDNOVSKY} the author conjectured that the set of points $\zeta\in \C^N$ for which there is a constant $A$ (depending on $\zeta$) such that, for every $P(z)$ of degree $d$, we have $\log|P(\zeta)|\geq -A(\log\Vert P\Vert +d)^{N+1}$ is full in $\C^N$. This conjecture have been proved by Amoroso \cite{AMOROSO}.
 
Here we prove an inequality which is independent but may be stronger in some situations:

\begin{theorem}\label{choudno intro} (cf. Definition \ref{S points on CN} and Theorem \ref{amoroso1}) Suppose that $a\geq N+1$.  A point $\zeta\in \C^N$ is said to be of type $S^N_a(\C)$ if we can find  a positive constant $A$ depending on $\zeta$, such that, for every non zero polynomial $P(z)\in \Z[z_1,\dots , z_N]$ of degree $d$ we have
\begin{equation}\label{eq: SNforpoly}
\log\vert P(\zeta)\vert\geq -Ad(d^{a-1}\log\Vert P\Vert+d\log(d)+1).
\end{equation} Then the set of points of type $S^N_a(\C)$ is full in $\C^N$ for the standard Lebesgue measure. 
\end{theorem}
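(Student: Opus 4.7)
The plan is a Borel--Cantelli argument. It suffices to show that for every bounded polydisk $U\subset\C^N$, the complement $U\setminus S^N_a(\C)$ has Lebesgue measure zero. For each pair $(d,h)$ with $d\geq 1,\ h\geq 0$ and a constant $A>0$, let
$$E_{d,h}(A)=\{\zeta\in U:\exists\,P\in\Z[z_1,\ldots,z_N]\setminus\{0\},\ \deg P=d,\ 2^h\leq\Vert P\Vert<2^{h+1},\ |P(\zeta)|<\epsilon_{d,h,A}\},$$
with $\epsilon_{d,h,A}=\exp(-Ad(d^{a-1}(h+1)\log 2+d\log d+1))$. If $\sum_{d,h}\mathrm{meas}(E_{d,h}(A))<\infty$ for some $A$, then Borel--Cantelli puts almost every $\zeta\in U$ in only finitely many $E_{d,h}(A)$. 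Since the countable union of zero loci of nonzero integer polynomials is a measure-zero subset of $\C^N$, almost every $\zeta$ also has $P(\zeta)\neq 0$ for every $P\neq 0$; for such $\zeta$ one enlarges $A$ to an $A(\zeta)$ that absorbs the finitely many exceptional polynomials, placing $\zeta\in S^N_a(\C)$.

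The key technical ingredient is a volume estimate: for $P\in\Z[z_1,\ldots,z_N]$ of exact degree $d$,
$$\mathrm{meas}(\{\zeta\in U:|P(\zeta)|<\epsilon\})\leq C_{U,N}\,d^{O(1)}\,\epsilon^{2/d},$$
which I would prove by induction on $N$. For $N=1$, factor $P(z)=a_d\prod_i(z-\zeta_i)$ with $|a_d|\geq 1$; then $|P(z)|<\epsilon$ forces $\min_i|z-\zeta_i|<\epsilon^{1/d}$, so the bad set lies in a union of $d$ disks of total area $\leq\pi d\,\epsilon^{2/d}$. For $N>1$, expand $P=\sum_k a_k(z_2,\ldots,z_N)\,z_1^k$: the highest nonzero coefficient $a_{d_1}$ is a nonzero integer polynomial in $N-1$ variables, so $\{|a_{d_1}|<\eta\}$ is controlled inductively while on its complement the one-dimensional bound applies fibrewise in $z_1$; optimizing the cutoff $\eta$ proves the claim.

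Plugging this into a union bound over the $\leq(5\cdot 2^h)^{M_d}$ polynomials in each bucket, with $M_d=\binom{N+d}{N}$, yields
$$\mathrm{meas}(E_{d,h}(A))\leq C_N\,d^{O(1)}\,5^{M_d}\cdot 2^{hM_d-2Ad^{a-1}(h+1)}\cdot d^{-2Ad}\,e^{-2A}.$$
The sum over $h\geq 0$ converges as soon as $2Ad^{a-1}>M_d+1$, which is exactly where the hypothesis $a\geq N+1$ is critical: since $M_d\leq c_Nd^N\leq c_Nd^{a-1}$, a single constant $A$ depending only on $N$ works uniformly in $d$.

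The main technical obstacle is the remaining $d$-sum at small $h$: the combinatorial factor $5^{M_d}\sim e^{c_Nd^N}$ defeats the superpolynomial decay $d^{-2Ad}$ once $N\geq 2$, so the naive union bound is inadequate. I would handle this by refining the count via a geometry-of-numbers / Fubini argument: for fixed $\zeta$ the integer polynomials of height $\leq 2^{h+1}$ with $|P(\zeta)|<\epsilon$ lie in a codimension-$2$ slab of $\R^{M_d}$ cut by the complex linear form $\ev_\zeta$, whose intersection with the cube has volume $O(2^{h(M_d-2)}\epsilon^2)$; averaging the resulting lattice-point count over $\zeta\in U$ bounds $\mathrm{meas}(E_{d,h}(A))$ by $C_N\cdot 2^{h(M_d-2)}\epsilon^2$, and with $\epsilon^2=e^{-2T},\ T=\Omega(d^a)$, the $d$-sum then converges. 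This step requires real care (the boundary contribution to the lattice count has to be controlled for generic $\zeta$), but once the refinement is in place, the estimates assemble into $\sum_{d,h}\mathrm{meas}(E_{d,h}(A))<\infty$ for $A$ sufficiently large and all $a\geq N+1$, completing the proof.
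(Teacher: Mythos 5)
Your approach is genuinely different from the paper's. The paper argues by induction on $N$: it slices $\C^{N}$ by a disk in the first coordinate, applies the one-variable area estimate (Corollary \ref{areaofpolynomial}) to the restricted polynomial $P_\zeta(z)=P(z,\zeta_2,\dots,\zeta_N)$, measuring it against the sup-norm $\Vert P_\zeta\Vert_r$ on the disk, and then lower-bounds $\log\Vert P_\zeta\Vert_r$ in terms of $\log\Vert P\Vert$ via the inductive hypothesis and $\log|P(0,\zeta)|$. You instead attempt a direct Borel--Cantelli argument on $\C^N$ driven by an $N$-variable volume estimate $\mathrm{meas}\{\zeta\in U:|P(\zeta)|<\epsilon\}\le C_{U,N}\,d^{O(1)}\,\epsilon^{2/d}$. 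That volume estimate and its inductive proof are sound: in the optimization $\eta\approx \epsilon^{(d-d_1)/d}$, both terms come out $\epsilon^{2/d}$ (and if $\deg a_{d_1}<d-d_1$ the bound is only stronger), and the $N=1$ case is handled correctly via the roots. Your route, once one trusts the union bound, is arguably cleaner than the paper's and avoids the subtleties of the relative norm $\Vert P_\zeta\Vert_r$.

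However, the second half of your proposal contains a genuine error, and it is in your \emph{assessment} of the union bound rather than in the bound itself. You claim that at small $h$ the combinatorial factor $5^{M_d}$ ``defeats the superpolynomial decay $d^{-2Ad}$''. But you have simply overlooked the factor $2^{-2A(h+1)d^{a-1}}$ already present in $\epsilon_{d,h,A}^{2/d}$: even at $h=0$ it contributes $2^{-2Ad^{a-1}}$. Since the hypothesis $a\ge N+1$ gives $a-1\ge N$ and $M_d=\binom{N+d}{N}\le c_N d^{N}\le c_N d^{a-1}$ for all $d\ge 1$, the product $5^{M_d}\cdot 2^{-2Ad^{a-1}}$ decays geometrically in $d^{a-1}$ as soon as $A$ is larger than an explicit constant depending only on $N$. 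Carrying out the $h$-sum (which requires only $M_d<2Ad^{a-1}$, again ensured by choosing $A$ large enough) and then the $d$-sum, everything converges. Your original union bound therefore already closes the argument; no refinement is needed.

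The proposed geometry-of-numbers ``refinement'' should be deleted, because it does not work as described. The number of integer vectors of height $\approx 2^h$ lying in a codimension-$2$ slab of width $\epsilon$ is \emph{not} bounded by the Euclidean volume $O(2^{h(M_d-2)}\epsilon^2)$ of the slab: when $\epsilon$ is small the boundary contribution to the lattice-point count dominates and is of size $\Omega(2^{h(M_d-2)})$, independent of $\epsilon$. The subsequent ``averaging over $\zeta$'' does not rescue this, because by Fubini the average is $\int_U\#\{P:\ |P(\zeta)|<\epsilon\}\,d\mu(\zeta)=\sum_P \mathrm{meas}\{\zeta:|P(\zeta)|<\epsilon\}$, which is exactly the union bound and is of size $\approx 2^{hM_d}\epsilon^{2/d}$, not $2^{h(M_d-2)}\epsilon^2$; the claimed bound is smaller than what the union bound provides and cannot be established this way. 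Removing that paragraph and reinstating your own union bound, with the correction above, yields a valid proof.
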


We would like to observe that the proof of the theorem above do not use special properties of the field $\C$. For instance it holds over $p$--adic fields. In section \ref{padicfields} we give some details of the proof in the $p$--adic case.  Remark that the proof by Amoroso in \cite{AMOROSO} holds only over $\C$. 

\smallskip

In the following sections will study some possible inequalities of Liouville type which hold for almost all the transcendental points on a projective variety.  

We will say that a point $z\in\cX(\C)$ is {\it arithmetically generic} if, {\it for every positive integer $d$} and every non zero section $s\in H^0(\cX, \cL^d)$ do not vanish at $z$. In section \ref{arithmetically generic sub varieties} we verify that arithmetic generic points do not depend on the line bundle $\cL$ but only on the generic fibre of $\cX$ and the complex embedding. 

First of all we show that inequalities as good as Liouville's are not possible for transcendental points ); We suppose that the genenic fiber of $\cX$ is of dimension $N$ and that $z\in\cX(\C)$:

\begin{theorem} (cf. Theorem \ref{liouville3}) We can find a constant $A$ depending only on $(\cX,\cL)$ for which the following holds:  let $z\in X_{\sigma_0}(\C)$ be an arithmetically generic point, then, for every $d\in \bN$ there exists an infinite sequence of sections $s_n\in H^0(\cX,\cL^d)$ for which
\begin{equation} \log\Vert s_n\Vert(z)\leq -Ad^N(\log^+\Vert s_n\Vert +d).
\end{equation}
\end{theorem}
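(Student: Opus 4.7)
The plan is a Dirichlet box principle argument on the $\Z$-lattice of integral global sections. Fix $d\in\bN$ and set $V_d := H^0(\cX,\cL^d)$, $h_d := \dim_\Q V_d(\Q)$; pick an integral basis $e_1,\dots,e_{h_d}$ of $V_d(\Z)$. The arithmetic Hilbert--Samuel formula gives $h_d = \tfrac{c}{N!}d^N + O(d^{N-1})$ with $c>0$ depending only on $(\cX_\Q,\cL_\Q)$, and a standard input from the arithmetic ampleness of $\cL$ (pulling sections from tensor powers of a very ample $\cL$) allows the basis to be chosen with $\log M_d := \log\max_i\Vert e_i\Vert = O(d)$, with implicit constant depending only on $(\cX,\cL)$.

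For a large integer $T$, consider the $(2T{+}1)^{h_d}$ integral sections $s = \sum n_i e_i$ with $|n_i|\le T$; each satisfies $\Vert s\Vert \le h_d T M_d$. Trivialize $\cL^d$ locally near $z$ via a frame $\tau$ with $\Vert\tau(z)\Vert = 1$, so that the value $s(z)\in\C$ in this trivialization has modulus $|s(z)| = \Vert s\Vert(z)\le \Vert s\Vert$. All evaluations therefore lie in the closed disk of radius $R := h_d T M_d$. Partitioning this disk into fewer than $(2T{+}1)^{h_d}$ axis-parallel squares of side roughly $R/(2T{+}1)^{h_d/2}$ and applying the pigeonhole principle yields two distinct coefficient tuples producing sections $s_1\ne s_2$ with
\[
 |s_1(z)-s_2(z)| \;\le\; \frac{C_0\, R}{(2T{+}1)^{h_d/2}}.
\]
The difference $s := s_1-s_2\in V_d(\Z)\setminus\{0\}$ then satisfies
\[
 \Vert s\Vert \le 2h_d T M_d,\qquad |s(z)| \le C_0\, h_d T M_d\,(2T{+}1)^{-h_d/2}.
\]
Eliminating $\log T$ between these two inequalities gives
\[
 \log|s(z)| \;\le\; -\bigl(\tfrac{h_d}{2}-1\bigr)\log^+\!\Vert s\Vert \;+\; O\bigl(h_d\log(h_d M_d)\bigr),
\]
and feeding in $h_d = \Theta(d^N)$ together with $\log M_d = O(d)$, one checks that for any fixed $A < c/(4\, N!)$ and all $T$ large enough, the right-hand side is bounded above by $-A d^N(\log^+\!\Vert s\Vert + d)$.

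Two points complete the argument. Since $z$ is arithmetically generic and $s\in V_d(\Z)\setminus\{0\}$, we have $s(z)\ne 0$, so the inequality is a genuine finite bound rather than the trivial one. To produce the infinite sequence, let $T$ range over an unbounded set of positive integers; the bound forces $|s_n(z)|\to 0$, so since each non-zero integral section has a fixed non-zero value at $z$, infinitely many of the $s_n$ must be pairwise distinct. The delicate step is obtaining the linear-in-$d$ bound $\log M_d = O(d)$ with a constant depending only on $(\cX,\cL)$ rather than on $z$ or on $d$; this is what ultimately forces the constant $A$ in the theorem to be uniform, and it relies on the arithmetic ampleness of $\cL$ in an essential way.
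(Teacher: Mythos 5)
Your proposal is correct and follows essentially the same strategy as the paper: a Dirichlet pigeonhole argument on the evaluations at $z$ of integral sections of bounded sup-norm, using the arithmetically generic hypothesis to guarantee the resulting difference section does not vanish at $z$. The only presentational difference is that the paper cites the Zhang/Gillet--Soul\'e counting estimate (formula \eqref{eq:counting1}) directly rather than re-deriving the count of bounded-norm lattice points from an integral basis and the arithmetic Hilbert--Samuel asymptotics, but the mechanism and the source of the exponent $d^N$ are identical.
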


(We define $\log^+(\cdot):=\sup\{ \log(\cdot),0\}$),

For this reason, in this paper we introduce the notion of $S_a(\cX)$ points, these are points for which a weaker inequality in the other direction holds.

\begin{definition}\label{SNpoints intro} (cf. Definition \ref{genericsubvariety}) Let $a\geq N$ be a real number. We will say that $z$ is of type $S_a(\cX)$  if we can find  a positive constants $A=A(z,\cL, a)$, depending on $z$, $\cL$  and $a$ such that, {\rm for every positive integer $d$ and every non zero global section $s\in H^0(\cX, \cL^d)$}, we have that
\begin{equation}\label{liouville inequality intro}
\log\Vert s\Vert(z)\geq -Ad^a(\log^+\Vert s\Vert +d).
\end{equation}
We will denote by $S_a(X)$ the subset of $X(\C)$ of points of type $S_a(\cX)$. \end{definition}

One can prove that the set $S_a(X)$ is independent on the choice of the hermitian ample line bundle $\cL$.



The hermitian metric on the ample line bundle $\cL$ induces a metric  $\mu_\cL(\cdot)$ on $\cX(\C)$.  Given two hermitian line bundles $\cL_1$ and $\cL_2$ on $\cX$, the induced metrics  $\mu_{\cL_i}(\cdot)$ may not coincide but, since $\cX(\C)$ is compact,  a subset $U\subset \cX(\C)$ is full for $\mu_{\cL_1}(\cdot)$ (this means that $\mu_{\cL_1}(U)=\mu_{\cL_2}(\cX(\C))$) if and only if it is full for $\mu_{\cL_2}(\cdot)$. Thus we can speak about full subsets of $\cX(\C)$ without making reference to the measure on it. 

We will then prove:

\begin{theorem}\label{full Sa on varieties intro} (cf. Theorem \ref{SNFULL}) For every $a\geq N+1$, the set of points of type $S_a(\cX)$ is full for the Lebesgue measure on $\cX(\C)$ (the complementary is of zero Lebesgue measure). 
\end{theorem}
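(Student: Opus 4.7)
The plan is to control the complement of $S_a(\cX)$ by a Borel--Cantelli-type argument. For each $A > 0$, set
$$B_A := \bigcup_{d \geq 1}\ \bigcup_{\substack{s \in H^0(\cX, \cL^d)\\ s \neq 0}} \bigl\{z \in \cX(\C) : \log\Vert s\Vert(z) < -A d^a(\log^+\Vert s\Vert + d)\bigr\},$$
the inner union being over non-zero integral sections. Since $\cX(\C) \setminus S_a(\cX) = \bigcap_{A > 0} B_A$, it suffices to prove $\mu(B_A) \to 0$ as $A \to \infty$. Two ingredients enter.

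\textbf{Arithmetic ingredient (counting).} For each $d \geq 1$ and $T \geq 0$, the number of non-zero integral sections $s \in H^0(\cX, \cL^d)$ with $\log\Vert s\Vert \leq T$ is at most $\exp\bigl(c_1(T+1)d^N\bigr)$, where $c_1$ depends only on $(\cX,\cL)$. Indeed, by asymptotic Riemann--Roch the $\Z$-rank of $H^0(\cX, \cL^d)$ is $\frac{(c_1(\cL))^N}{N!}\,d^N + O(d^{N-1})$, and a lattice-point count in the ball of sup-norm radius $e^T$ (with respect to a fixed integral basis) gives the stated bound after using the equivalence of norms on the finite-dimensional real vector space $H^0(\cX, \cL^d)\otimes \bR$. \textbf{Analytic ingredient (sublevel sets).} There exist constants $C_1, c_2 > 0$, depending only on $(\cX, \cL)$, such that for every $d \geq 1$, every non-zero $s \in H^0(\cX, \cL^d)$, and every $\delta \in (0,1)$,
$$\mu\bigl\{z \in \cX(\C) : \Vert s\Vert(z) \leq \delta\,\Vert s\Vert\bigr\} \leq C_1\,\delta^{c_2/d}.$$
One proves this by covering $\cX(\C)$ by finitely many polydisc charts on which $\cL$ trivialises and the potential of the hermitian metric is uniformly bounded; on each chart $s$ becomes a holomorphic function whose restriction to any complex line behaves like a polynomial of degree $O(d)$ (since $\cL^d$ restricted to a curve $C \subset \cX$ has degree $(c_1(\cL)\cdot C)\,d$), so a one-variable Cartan-type inequality applies, and Fubini over the remaining directions yields the displayed estimate.

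Combining the two ingredients, stratify sections by integers $T \geq 0$ with $\log^+\Vert s\Vert \in [T, T+1)$. For such $s$ to contribute to $B_A$ at $z$, one needs $\Vert s\Vert(z)/\Vert s\Vert \leq \exp(-A d^a(T+d) - T)$, so the sublevel estimate gives a measure bound $\leq C_1\exp(-c_2 A d^{a-1}(T+d))$ per section. Multiplying by the section count and summing,
$$\mu(B_A) \leq C_1 \sum_{d \geq 1}\sum_{T \geq 0} \exp\bigl(c_1(T+1)\,d^N - c_2 A\,d^{a-1}(T+d)\bigr).$$
When $a \geq N + 1$ one has $d^{a-1} \geq d^N$, and for $A$ large enough (say $c_2 A > 2c_1$) the exponent is bounded above by $-c(T+d)$ with some $c > 0$, yielding a doubly geometric sum that tends to $0$ as $A \to \infty$. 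The threshold $a \geq N+1$ is precisely the point at which the $d^{a-1}$ decay in the sublevel estimate beats the $d^N$ growth in the section count.

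The main technical obstacle is the linear-in-$1/d$ exponent in the analytic ingredient. A naive plurisubharmonic-function argument treating $s$ as a generic holomorphic function in $N$ complex variables yields only $\delta^{c/d^N}$, which would force $a \geq 2N$ and miss the sharp threshold $a = N+1$. The required $c_2/d$ must exploit the fact that a section of $\cL^d$ restricts to every algebraic curve of $\cX$ as an object of effective degree $O(d)$; this geometric input, uniform in the base point thanks to the compactness of $\cX(\C)$ and the smoothness of the hermitian metric, is what couples the theorem to the intrinsic dimension $N$ of $\cX$ rather than to any ambient dimension.
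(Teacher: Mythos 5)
Your Borel--Cantelli framework and the counting estimate are fine, and you correctly identify the sharp sublevel-set inequality (exponent $c_2/d$ rather than $c_2/d^N$) as the crux of the matter. The gap is in the justification of that \emph{analytic ingredient}: ``Fubini over the remaining directions'' does not yield the stated estimate. Restricting $s$ to a complex line $\ell$ and applying a one-variable Cartan/Remez-type inequality gives a bound on $\mu\{z\in\ell : \Vert s\Vert(z)\le\delta'\,\sup_{\ell}\Vert s\Vert\}$, normalized by the \emph{fiberwise} sup $\sup_\ell\Vert s\Vert$. But $\sup_\ell\Vert s\Vert$ can be arbitrarily smaller than the global $\Vert s\Vert$ (think of $s$ vanishing to high order near the whole fiber), so the per-fiber estimate with $\delta'=\delta\Vert s\Vert/\sup_\ell\Vert s\Vert$ can be vacuous, and integrating it over the base does not produce $C_1\delta^{c_2/d}$. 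Controlling the measure of the set of fibers on which $\sup_\ell\Vert s\Vert$ is small is exactly the hard part, and the function $b\mapsto\sup_{\ell_b}\Vert s\Vert$ is a supremum of moduli, not the norm of a section, so the one-variable estimate cannot simply be iterated. What you would actually need is a multivariate Remez/Brudnyi--Ganzburg type inequality for sections of a positive line bundle, which is a substantial theorem you neither cite nor prove.

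The paper circumvents this entirely and the route is genuinely different. It proves the fullness of $S_a$ by induction on $\dim X$. The one-dimensional case (Theorem \ref{SNongenericcurves}) establishes fullness of $S_a(Y)$ on arithmetically generic curves $Y$, where the definition of $S_a(Y)$ is deliberately normalized by $\log\Vert s\Vert_Y$ rather than $\log^+\Vert s\Vert$; the one-variable Cartan-type estimate (Lemma \ref{interpolation}) then applies cleanly, and the uniform Bernstein behavior of sections along curves is supplied by Sadullaev's algebraicity criterion (Lemma \ref{sadullaev2}). The inductive step is Lemma \ref{reductionlemma}: for a smooth divisor $D$, a point $z_0\in S_N(D_K)$ (which exists by the induction hypothesis), and an arithmetically generic curve $Y\ni z_0$, one has $\log\Vert s\Vert_Y \ge -Ad^N(\log^+\Vert s\Vert + d)$, proved by factoring $s$ through powers of the defining section of $D$, applying Liouville's inequality on $D$, and invoking Nevanlinna's First Main Theorem on $Y$. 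This converts the fiberwise normalization $\log\Vert s\Vert_Y$ into the global one $\log^+\Vert s\Vert$ on a sufficiently rich family of curves (the fibers through $S_N(D_K)$ of a fibration $h:X_K\to B_K$), so that a final Fubini over $B_K$ closes the induction. In short, the paper never proves nor needs the $N$-dimensional sublevel estimate you assert; it builds it up one dimension at a time, with the normalization issue handled by arithmetic (Liouville) plus Nevanlinna, not by a direct pluripotential estimate. To salvage your approach you would need to either supply a genuine proof of the $N$-dimensional sublevel estimate (a self-contained project) or replace it by the paper's induction.
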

Usually theorem  of this type are proved by coupling the classical Borel--Cantelli lemma (cf. \ref{borelcantelli}) with an estimate of the volume of the set of elements having norm which do not satisfy the inequality of definition \ref{SNpoints intro}. The computation of this volume is usually quite delicate and the standard strategy is to relate it to the distance of the points to the zero set of the involved global section. This is the strategy used for instance by Lang in \cite{LANG}, by  Amoroso in \cite{AMOROSO} and in many proofs of the book \cite{BUGEAUDBOOK}. Here we use a strategy which is a bit different: by Fubini Theorem, we can reduce the proof to a similar computation on compact Riemann surfaces. Over one dimensional disks we use an argument involving Lagrange interpolation which allows to estimate, given a {\it polynomial} $P(z)$, the area of the the set of points $z$ for which $|P(z)|\leq\epsilon$. Then we approximate analytic functions by polynomials. In order to obtain good approximation, an important criterion of algebraicity due to Sadullaev \cite{SADULLAEV} is used.  The proof of the Theorem of Sadullaev makes a heavy use of peculiar properties of complex analysis, which prevents a straightforward generalization of Theorem \ref{full Sa on varieties intro} to a $p$--adic contest.  We think that a $p$--adic version of \cite{SADULLAEV} would be very important and interesting.

One should observe that inequalities of Theorem \ref{choudno intro} seems better then inequalities of Theorem  \ref{full Sa on varieties intro}. Never the less they are obtained essentially by the same methods. This due to the fact that the exponent of the degree in the involved inequalities is related to the arithmetic ampleness of the involved line bundle. When we work with polynomials with standard height, the involved line bundle is a arithmetically nef but not an arithmetically  big  line bundle. The fact that we can obtain better inequalities in this contest reflects this difference. 

\smallskip

In the last section we describe two application to the growth of rational points of bounded height in the image of an analytic map of a disk to a projective variety. These are in the spirit of the principle explained above: the presence of a "good" transcendental point can be a tool which replaces Zero Lemmas.

Suppose that $X$ is a smooth projective variety of dimension $N>1$ defined over $\Q$ (as before, a statement for general number field holds but, in order to simplify notations, in this introduction we state it only over $\Q$). As before we suppose that we fixed a model $\cX$ of it  and an hermitian ample line bundle $\cL$ over $\cX$. 

Let $\Delta_1$ be the unit disk in $\C$ and $f:\Delta_1\to X(\C)$ be an analytic map with Zariski dense image. Fix $r<1$ positive. We are interested in studying behavior of the cardinality $C_r(f,T)$ of the set
\begin{equation}
S_r(f,T):=\{ z\in \Delta_1\; /\; |z|<r\; ; \; f(z)\in X(\Q)\; {\rm and}\; h_\cL(f(z))\leq T\}
\end{equation}
when $T$ goes to infinity. 

There is a huge literature on this problem: the classical theorem by Bombieri and Pila \cite{BOMBIERIPILA} tells us that in general $C_r(f,T)\ll \exp(\epsilon T)$ but there are many interesting and natural conditions, which, if satisfied, imply that the growth of $C_f(r,T)$ is polynomial in $T$, cf. for instance \cite{COMTEYOMDIN} or \cite{BOXALL} and \cite{MASSER}.

The theorems we can prove are:

\begin{theorem}\label{counting0} (cf. Theorem \ref{counting2}) Let $a\geq N+1$. Suppose that there is $z_0\in\Delta$ such that $f(z_0)\in S_a(X)$.  Then, for every $\epsilon>0$ and $\gamma>{{1}\over{N}}$ there exists a constant $A=A(\cX,\cL, r,f,\epsilon, \gamma, a)$ such that, if $T\geq A$ then we have
\begin{equation}
C_f(r,T)\leq \epsilon T^{\gamma(a+1)+1}.
\end{equation}
\end{theorem}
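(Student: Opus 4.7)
The strategy follows the classical diophantine scheme (Siegel--Thue plus Schwarz plus Liouville), with the $S_a$ hypothesis at $f(z_0)$ playing the role normally occupied by a Zero Lemma together with Liouville's inequality at an algebraic point.  I would argue by contradiction: suppose $M:=C_f(r,T)>\epsilon T^{\gamma(a+1)+1}$ and let $z_1,\dots,z_M\in\Delta_r$ denote the distinct rational preimages, so that $f(z_i)\in X(\Q)$ and $h_\cL(f(z_i))\le T$.  Fix an intermediate parameter $\gamma'\in(1/N,\gamma)$ and set $d:=\lceil T^{\gamma'}\rceil$, so that $N_d:=h^0(\cX,\cL^d)\sim c_0 d^N\gtrsim T^{\gamma' N}$ with $\gamma' N>1$.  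Apply the arithmetic Siegel lemma to the evaluation map $H^0(\cX,\cL^d)_\Z\to\bigoplus_{i\in I}(\cL^d)|_{f(z_i)}$ at a subset $I\subset\{1,\dots,M\}$ of size $\lfloor N_d/2\rfloor\le M$ to produce a non-zero integral section $s\in H^0(\cX,\cL^d)$ which vanishes at $f(z_i)$ for every $i\in I$ with $\log\Vert s\Vert\le c_1 dT$.  Note that $s(f(z_0))\neq 0$ automatically, since the $S_a$ hypothesis forces $f(z_0)$ to be arithmetically generic.

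Next, choose $r<r''<1$ with $z_0\in\Delta_{r''}$, trivialize $f^*\cL^d$ over $\Delta_{r''}$, and let $g(z):=s(f(z))$ be the resulting holomorphic function.  The $S_a$ inequality at $f(z_0)$ gives
\[
\log|g(z_0)|\;\ge\;\log\Vert s\Vert(f(z_0))+O(1)\;\ge\;-A_0 d^a(\log^+\Vert s\Vert+d)\;\ge\;-A'_0\,d^{a+1}T,
\]
while Jensen's formula applied to $g$ on $\Delta_{r''}$ yields, for the total number of zeros $n$ of $g$ in $\Delta_r$,
\[
n\cdot L\;\le\;\log\Vert g\Vert_{\Delta_{r''}}-\log|g(z_0)|+O(1)\;\le\;c_1 dT+A'_0 d^{a+1}T+O(1)\;\le\;C_3\,d^{a+1}T,
\]
where $L=L(r,r'',z_0)>0$.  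In particular $|I|\le n\le C_3 d^{a+1}T\le C_3 T^{\gamma'(a+1)+1}$.

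To upgrade the bound on the single subset $|I|$ into a bound on the total count $M$, I would argue by induction on $\dim X=N$, using the Liouville inequality for subvarieties (Theorem~\ref{liouville2}) together with the Zariski density of $f(\Delta)$: the rational preimages outside the hypersurface $\{s=0\}\subset X$ lie in a proper Zariski closed subvariety of $X$, on which the argument is iterated in one dimension lower (Theorem~\ref{liouville2} gives, with controlled constants, a $S_a$-type hypothesis that restricts to the subvariety).  This yields a total bound $M\le C''\,T^{\gamma'(a+1)+1}$; since $\gamma'<\gamma$ and $a+1>0$, for $T$ sufficiently large we have $C''\,T^{\gamma'(a+1)+1}<\epsilon\,T^{\gamma(a+1)+1}$, contradicting the standing assumption.

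\textbf{Main obstacle.}  The delicate point is the passage from the single-section bound on $|I|$ to the total count $M$.  A naive union bound over integral sections of $H^0(\cX,\cL^d)$ loses a factor of $\sim N_d\sim d^N$, which gives an exponent on $T$ that is too large by $1$ when $a\ge N+1$.  Closing this gap requires the inductive reduction to lower-dimensional subvarieties, and a careful propagation of the constants in the $S_a$ inequality and in Siegel's bound under restriction.  The interplay between $\gamma>1/N$ (which makes $N_d>T$ and enables Siegel's lemma at the degree $d\asymp T^{\gamma'}$) and the induction on $\dim X$ (which removes the surplus factor $d^N$) is the technical heart of the argument.
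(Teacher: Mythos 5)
Your setup (Siegel lemma at degree $d\asymp T^{\gamma'}$, Jensen/Nevanlinna counting of zeros of $s\circ f$, the $S_a$ inequality at $f(z_0)$ providing the lower bound on $\log|g(z_0)|$) matches the paper's argument closely, and you correctly flag the genuine difficulty: the section produced by Siegel's lemma only vanishes on a chosen subset $I\subset S_r(f,T)$, so the Jensen count a priori bounds only $\#\{i:\ s(f(z_i))=0\}$ and not $M=C_r(f,T)$. However, your proposed repair is not viable and is not what the paper does. The claim that ``the rational preimages outside the hypersurface $\{s=0\}$ lie in a proper Zariski closed subvariety'' is false as stated: the complement of a hypersurface is a dense open set, and there is no mechanism forcing the unused rational points into a lower-dimensional locus. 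Nor does Theorem~\ref{liouville2} furnish an ``$S_a$-type hypothesis that restricts to a subvariety'' --- it is a statement about subvarieties defined over $\overline{\Q}$ (norms of sections restricted to them), not about producing an $S_a$ point on a subvariety from one in the ambient space. So the induction on $\dim X$ does not close the gap.

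The paper's resolution of this exact difficulty is the content of Lemma~\ref{vanishingsmallsections}, and it is a bootstrapping argument rather than an induction: one first chooses a subset $H(T)\subset S_r(f,T)$ of size $\asymp T^{N\gamma}$ and uses Siegel's lemma to produce $s$ vanishing on $f(H(T))$ with $d\lesssim T^{\gamma}$ and $\log\Vert s\Vert\lesssim T^{1+\gamma}$. One then shows that $s$ must in fact vanish at $f(z)$ for \emph{every} $z\in S_r(f,T)$, as follows: if $z\in S_r(f,T)\setminus H(T)$ and $s(f(z))\neq 0$, the \emph{classical} Liouville inequality (Corollary~\ref{Liouville and heights}) applied to the rational point $f(z)$ of height $\le T$ gives $\log\Vert s\Vert(f(z))\ge -dT-([K:\Q]-1)\log\Vert s\Vert\gtrsim -T^{1+\gamma}$; on the other hand, Jensen's formula on the disk, combined with the $\#H(T)\asymp T^{N\gamma}$ zeros already imposed, forces $\log\Vert s\Vert(f(z))$ to be $\lesssim T^{1+\gamma}-cT^{N\gamma}$ for a positive constant $c$. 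For $\gamma$ large enough (which is exactly the source of the lower bound on $\gamma$ in the statement) this is a contradiction as $T\to\infty$, so $s(f(z))=0$. Only then does one apply Jensen once more, now with the lower bound at $z_0$ coming from the $S_a$ hypothesis, to count all of $S_r(f,T)$ at once. In short: the missing idea is a \emph{second} Liouville argument, at the rational points themselves, used to promote ``vanishes on a chosen subset'' to ``vanishes on the whole set''; your sketch uses Liouville only once, at the transcendental point $f(z_0)$, and tries to compensate with a dimension induction that does not go through.
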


and

\begin{theorem}\label{counting01} (cf. Theorem \ref{counting3})  Let $f:\Delta_1\to X(\C)$ as before. Let $a\geq N-1$ be a real number. Let $s_0\in H^0(\cL, \cL)$ be an irreducible smooth divisor. Suppose that  there is $p\in f(\Delta_r)\cap \div(s_0)$ which is of type $S_a(\div(s_0))$. Then, for every $\epsilon>0$ and $\gamma>{{1}\over{N}}$ there exists a constant $A=A(\cX,\cL, r,f,\epsilon, \gamma)$ such that, if $T\geq A$ then we have
\begin{equation}
C_f(r,T)\leq \epsilon T^{\gamma(a+1)+1}.
\end{equation}
\end{theorem}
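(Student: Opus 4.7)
The strategy follows that of Theorem~\ref{counting0}, with a key modification that exploits the fact that $p$ lies on the codimension-one subvariety $\div(s_0)$. Namely, one constructs an auxiliary section $s\in H^0(\cX,\cL^d)$ whose \emph{restriction} to $\div(s_0)$ has very small norm (of order $d$, not $dT$), so that the $S_a(\div(s_0))$ inequality applied to $s|_{\div(s_0)}$ at $p$ produces a lower bound for $\log\|s\|(p)$ without an extra factor of $T$. This is what accommodates the smaller hypothesis $a\ge N-1$.

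Argue by contradiction: assume $M:=C_f(r,T)>\epsilon T^{\gamma(a+1)+1}$ for arbitrarily large $T$, enumerate the preimages $z_1,\dots,z_M\in\Delta_r$ with $f(z_i)\in X(\Q)$ and $h_\cL(f(z_i))\le T$, and fix $z_0\in\Delta_r$ with $f(z_0)=p$. Set $\cM:=\cL|_{\div(s_0)}$ and pick $d=\lceil T^\delta\rceil$ for a parameter $\delta$ slightly larger than $\gamma$. Use arithmetic Minkowski on $\div(s_0)$ to obtain a non-zero $\tilde s\in H^0(\div(s_0),\cM^d)$ with $\log^+\|\tilde s\|=O(d)$, then lift $\tilde s$ to $s_1\in H^0(\cX,\cL^d)$ via the restriction surjection (onto for $d$ large by Serre vanishing), with $\log^+\|s_1\|=O(d)$. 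Since $f(\Delta_1)$ is Zariski dense in $X$, $s_0\circ f\not\equiv 0$, so $s_0(f(z_i))\ne 0$ for all but $O(1)$ indices $i$; set $c_i:=-s_1(f(z_i))/s_0(f(z_i))$ for such $i$, and apply arithmetic Siegel on $\cX$ to produce $s_2\in H^0(\cX,\cL^{d-1})$ with $s_2(f(z_i))=c_i$ and $\log^+\|s_2\|=O(dT)$. Then $s:=s_1+s_0 s_2$ satisfies $s(f(z_i))=0$ for all but $O(1)$ indices, $s|_{\div(s_0)}=\tilde s\ne 0$ with $\log^+\|s|_{\div(s_0)}\|=O(d)$, and $\log^+\|s\|=O(dT)$.

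Setting $g(z)=s(f(z))$ in a trivialization of $f^*\cL^d$ over $\Delta_1$, the function $g$ vanishes at $M-O(1)$ points of $\Delta_r$, so the Schwarz lemma (via Blaschke products) on the unit disk yields
\[
\log\|s\|(p)\ \le\ \log^+\|s\|+O(d)-(M-O(1))\log(1/\rho),
\]
for some $\rho=\rho(r)<1$, while the $S_a(\div(s_0))$ inequality applied to $\tilde s$ at $p$ gives
\[
\log\|s\|(p)\ \ge\ -A\,d^{a}\bigl(\log^+\|s|_{\div(s_0)}\|+d\bigr)\ \ge\ -A'd^{a+1}.
\]
Combining yields $M\le C(dT+d^{a+1})=C(T^{\delta+1}+T^{\delta(a+1)})$. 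Calibrating $\delta$ in the range where both $\delta+1<\gamma(a+1)+1$ and $\delta(a+1)<\gamma(a+1)+1$ hold gives $M\le C' T^{\gamma(a+1)+1-\eta}$ for some $\eta>0$, contradicting the hypothesis for $T$ large.

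The main obstacle is the arithmetic construction of $s_1$ and $s_2$ with the claimed bounds. One needs (i) an arithmetic extension lemma for the restriction $H^0(\cX,\cL^d)\twoheadrightarrow H^0(\div(s_0),\cM^d)$, (ii) a Liouville-type lower bound for $|s_0(f(z_i))|$ to control the heights of the $c_i$'s (cf.\ Theorem~\ref{Liouvillethm}), and (iii) an arithmetic Siegel lemma for $s_2$ absorbing these heights into $O(dT)$. The calibration of $\delta$ is clean for $a>N-1$, where the admissible range $(\gamma(a+1)+1)/N\le\delta<\gamma+1/(a+1)$ is non-empty; the boundary case $a=N-1$ is handled by replacing $\gamma$ by $\gamma'$ slightly smaller (still $>1/N$) and applying the same argument.
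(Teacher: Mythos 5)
Your proof takes a genuinely different route from the paper's, and unfortunately it has two real gaps.

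The paper's argument is considerably more economical: it reuses the very same section $s$ produced by Lemma~\ref{vanishingsmallsections} (degree $d$, $\log\|s\|=O(dT)$, vanishing at all points of $S_r(f,T)$), factors $\div(s)=\alpha\,\div(s_0)+\div(s_1)$ so that $s_1$ does not vanish identically on $\div(s_0)$, bounds $\log\|s_1\|$ by the submultiplicativity estimate~\eqref{eq:supestimate} (so $\log\|s_1\|$ is still of order $dT$, not $d$), and then applies the $S_a(\div(s_0))$ hypothesis at $p$ to $s_1|_{\div(s_0)}$. The gain over Theorem~\ref{counting2} comes entirely from the fact that $\dim\div(s_0)=N-1$, which permits $a\ge N-1$; no attempt is made to make the restriction small. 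You instead try to build a section whose restriction to $\div(s_0)$ has height $O(d)$ rather than $O(dT)$. That is more ambitious than what is needed, and the two places where it breaks are the following.

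First, Siegel's lemma (Lemma~\ref{siegel}) produces small \emph{nonzero} elements in the \emph{kernel} of a linear map; it does not solve inhomogeneous systems $s_2(f(z_i))=c_i$. The standard fix — homogenize by adjoining a variable $\lambda$, i.e.\ seek small nonzero $(\lambda,s_2)$ with $\lambda s_1(f(z_i))+s_0(f(z_i))s_2(f(z_i))=0$ — does not close the gap: Siegel may return $\lambda=0$, in which case $s=s_0 s_2$ vanishes identically on $\div(s_0)$ and the $S_a(\div(s_0))$ inequality is vacuous. This degenerate branch is exactly what the paper's factorization step is designed to eliminate; you cannot simply assume it does not occur.

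Second, your calibration of $\delta$ is arithmetically inconsistent. You correctly identify the two constraints $\delta\ge(\gamma(a+1)+1)/N$ (so that $h^0(\cX,\cL^d)$ exceeds the number $M>\epsilon T^{\gamma(a+1)+1}$ of vanishing conditions) and $\delta<\gamma+1/(a+1)$ (so the final estimate $M\lesssim T^{\delta+1}+T^{\delta(a+1)}$ undercuts $T^{\gamma(a+1)+1}$). But
\[
\frac{\gamma(a+1)+1}{N}<\gamma+\frac{1}{a+1}
\iff
\gamma(a+1)\bigl[(a+1)-N\bigr]<-\bigl[(a+1)-N\bigr],
\]
which is impossible for $a\ge N-1$ (and vacuous only for $a<N-1$, outside the hypothesis). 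For instance $N=2$, $a=1$, $\gamma=0.6$ gives the interval $[1.1,1.1)$, and $N=2$, $a=2$, $\gamma=0.6$ gives $[1.4,0.93)$. So the admissible range for $\delta$ is \emph{always} empty in the regime of the theorem, contrary to your claim. The paper escapes this squeeze by not imposing vanishing at all $M$ points via linear algebra: Lemma~\ref{vanishingsmallsections} interpolates only at a subset of size $\asymp T^\gamma$ and then uses the classical Liouville inequality (Corollary~\ref{Liouville and heights}) together with Nevanlinna's First Main Theorem to force vanishing at the remaining points. Without that device, the dimension count and the target exponent cannot be reconciled.
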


These theorems  should also be compared with a similar theorem of proved by Surroca \cite{surroca}. In her theorem there is no condition on the type of points contained in the image but the conclusion is that the polynomial growth holds only for a sequence of values of $T$. Her paper provides also counterexamples which show that her theorem is sharp. consequently, her counterexamples provide examples of maps  which do not verify the hypothesis of Theorems \ref{counting0} and \ref{counting01}. The reader should also compare with the paper \cite{BOXALL} where similar ideas are exploited. 

The proof of Theorems \ref{counting0} and \ref{counting01} exploits the generalized Liouville inequality for points of type $S_a(\cX)$ plus a Siegel Lemma which bounds the degree $d$ and the height of a section of $\cL^d$ vanishing on $S_r(f,T)$: cf. \ref{vanishingsmallsections}.

Few words about the pre existing literature: Chudnovsky in \cite{CHUDNOVSKY} stated his conjecture in relation with possible criteria of algebraic independence between complex numbers. Amoroso proved the conjecture in \cite{AMOROSO} over the complex numbers and another proof which holds also over $p$--adic fields is given by Mikha$\breve{\rm i}$lov in \cite{MIKHAILOV}. Both proofs rely on the commutative algebra tools developed by Nesterenko. Even if very similar, one cannot deduce our Theorem \ref{choudno intro} from theirs and conversely. Moreover our proof is much shorter and elementary. 

At the moment, to our knowledge, there is no literature on possible inequalities of Liouville type on general projective varieties. 

In the seminal papers \cite{BKM} and \cite{KM}, the authors study a similar problem for arbitrary sub varieties of the affine spaces. They obtain very deep results on the fullness of "badly" approximable by linear forms points on an arbitrary variety. In \cite{BKM} this is done in the spirit of Khitchine theory.   

Translated in the language used in this paper, their theory would imply the following result: Let $(\cX, \cL)$ be as before {\it and fix a positive integer $d$}. The set of points $z\in\cX(\C)$ for which there is a constant $A$ (depending on $z$, $\cX$, $\cL$, $\epsilon$ {\it and $d$}) such that, for every $s\in H^0(\cX;\cL^d)$ we have $\log\Vert s\Vert(z)\geq -A(\log^+\Vert s\Vert +d)^{N+\epsilon}$, is full in $\cX(\C)$. 

Even if the exponent in the inequality is better then what we find, the involved constant $A$ depends, a priori, on $d$. Thus, even when coupled with a Baire Theorem argument, their theorems do not seems to imply ours. Indeed, the constant involved in the inequalities we study are {\it independent on $d$}.

\section{Notations and basic facts from Arakelov geometry}\label{notations etc}

\subsection{Tools and notations from arithmetic geometry and Arakelov theory} 

Let $K$ be a number field and $O_K$ be its ring of integers. We will denote by $M_K^\infty$ the set of infinite places of $K$. We fix a place $\sigma_0\in M_K^\infty$. 

Let $X_K$ be a projective variety of dimension $N$ defined over $K$

If $\tau\in M_K^\infty$  and $F$ is an object over $X_K$ ($F$ may be a sheaf, a divisor, a cycle...), we will denote by $X_\tau$ the complex variety $X_K\otimes_\tau\C$ and by $F_\tau$ the restriction of $F$ to $X_\tau$. 

A model $\cX\to\Spec(O_K)$ of $X_K$ is a  flat projective $O_K$ scheme whose generic fiber is isomorphic to $X_K$. Suppose that $L_K$  and $\cX$ are  respectively a line bundle over $X_K$ and a model of it; We will say that a line bundle $\cL$ over $\cX$ is a model of $L_K$ if its restriction to the generic fiber is isomorphic to $L_K$. 

If $\cX$ is a model of $X_K$, an hermitian line bundle $\overline\cL=(\cL, \Vert\cdot\Vert_\sigma)_{\sigma\in M_K^\infty}$ is a line bundle over it equipped, or every $\tau\in M_K^\infty$ a metric on $L_\tau$ with the condition that, if $\sigma=\overline\tau$ then the metric over  $L_\sigma$ is the conjugate of the metric on $L_\tau$.

If $X_K$ is a projective variety, it is easy to see that for every line  bundle $L_K$  on $X_K$, we can find an embedding $\iota : X_K\hookrightarrow P_K$,  where $P_K$ is a smooth projective variety and $L=\iota^\ast(M)$ with $M$ line bundle on $P_K$. A metric on $K_L$ will be said to be smooth if it is the restriction of a smooth metric on $M$.

Let $\overline\cL=(\cL, \Vert\cdot\Vert_\sigma)_{\sigma\in M_K^\infty}$ be an hermitian line bundle on a model $\cX$  of $X_K$. If $s\in H^0(X_K, L^d_K)$ is a non zero section, we will denote by $\log^+\Vert s\Vert$ the real number $\sup_{\tau\in M_K^\infty}\{ 0,\log\Vert s_\tau\Vert_\tau\}$. More generally, of $a$ is a real number,  we will denote by $a^+$ the real number $\sup\{a,0\}$ and by $a_+$ the real number $\sup\{ 1, a\}$.

\begin{definition} An {\rm arithmetic polarization $(\cX,\overline\cL)$ of $X_K$} is the choice of the following data:

\itemize
\item An ample line bundle $L_K$ over $X_K$
\item A projective model $\cX\to\Spec(O_K)$ of $X_K$ over $O_K$.
\item A relatively ample line bundle hermitian line bundle $\overline\cL$ over $\cX$ which is a model of $L_K$.
\item For every $\tau\in M_K^\infty$ we suppose that the metric on $L_\tau$ is smooth and positive.
\enditemize
\end{definition}

We recall the following standard facts of Arakelov theory:
\itemize

\item If $L$ is a hermitian line bundle over $\Spec(O_K)$ and $s\in L$ is a non vanishing section, we define
\begin{equation}
\widehat{\deg}(L):=\log(Card(L/sO_k))-\sum_{\sigma_in M^\infty_K}\log\Vert s\Vert_\sigma.
\end{equation}
If $E$ is an hermitian vector bundle of rank $r$ on $\Spec(O_K)$, we define $\widehat{\deg}(E):=\widehat{\deg}(\wedge^{r} E)$ and the slope of $E$ is $\widehat{\mu}(E)={{\widehat{\deg}(E)}\over{r}}$.  

\item Within all the sub bundles of $E$ there is one whose slope is maximal, we denote by $\widehat{\mu}_{\max}(E)$ its slope. It is easy to verify that $\widehat{\mu}_{\max}(E_1\oplus E_2)=\max\{ \widehat{\mu}_{\max}(E_1), \widehat{\mu}_{\max}(E_2)\}$.

\item We will need the following version of the Siegel Lemma: 

\begin{lemma}\label{siegel} (Siegel Lemma) Let $E_1$ and $E_2$ be hermitian vector bundles over $O_K$. Let $f:E_1\to E_2$ be a non injective linear map. Denote by $m=rk(E_1)$ and $n=rk(Ker(f))$. Suppose that there exists a positive real constant $C$ such that:

a) $E_1$ is generated by elements of $\sup$ norm less or equal then $C$.

b) For every infinite place $\sigma$ we have $\Vert f\Vert_\sigma\leq C$ 

Then there exists an non zero element $v\in Ker(f)$ such that
\begin{equation}
\sup_{\sigma\in M^\infty_K}\{ \log\Vert v\Vert_\sigma\}\leq{{m}\over{n}}\log(C^2)+\left( {{m}\over{n}}-1\right) \widehat{\mu}_{\max}(E_2)+3\log(n)+A
\end{equation}
where $A$ is a constant depending only on $K$. 
\end{lemma}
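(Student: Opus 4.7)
The plan is to follow the classical Arakelov-theoretic strategy: estimate the arithmetic slope of $\Ker(f)$ from below, and then apply an arithmetic Minkowski-type first-minimum theorem. Let $K:=\Ker(f)$, viewed as a saturated rank-$n$ sub-bundle of $E_1$ with the restricted hermitian structure. The short exact sequence
\begin{equation*}
0 \to K \to E_1 \to E_1/K \to 0
\end{equation*}
of hermitian vector bundles (with the quotient metric on $E_1/K$) gives the additivity $\widehat{\deg}(K)=\widehat{\deg}(E_1)-\widehat{\deg}(E_1/K)$, so the goal reduces to a lower bound for $\widehat{\deg}(E_1)$ coming from hypothesis (a) and an upper bound for $\widehat{\deg}(E_1/K)$ coming from hypothesis (b).

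For the upper bound I use that $f$ induces an injection $\bar f\colon E_1/K\hookrightarrow E_2$ with $\Vert\bar f\Vert_\sigma\le C$, so the quotient metric on $E_1/K$ and the sub-metric pulled back from $E_2$ differ by a factor at most $(m-n)\log C$ at each archimedean place (as one sees on the induced map of determinants). Combined with the tautological estimate $\widehat{\deg}(F)\le rk(F)\,\widehat{\mu}_{\max}(F)$ applied to $f(E_1)\subset E_2$, this yields
\begin{equation*}
\widehat{\deg}(E_1/K)\le (m-n)\widehat{\mu}_{\max}(E_2)+c_1(K)(m-n)\log C.
\end{equation*}
For the lower bound on $\widehat{\deg}(E_1)$ I will extract from the given generators of sup-norm $\le C$ an $m$-tuple $e_1,\dots,e_m$ that is $K$-linearly independent; the sub-lattice $O_K e_1\oplus\cdots\oplus O_K e_m$ has index $\ge 1$ in $E_1$, and Hadamard's inequality gives $\Vert e_1\wedge\cdots\wedge e_m\Vert_\sigma\le C^m$ at each archimedean place, whence $\widehat{\deg}(E_1)\ge -c_2(K)\,m\log C$.

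Dividing the combination by $n$ produces a slope inequality for $K$ of the form
\begin{equation*}
\widehat{\mu}(K)\ge -\frac{m}{n}\log(C^2)-\left(\frac{m}{n}-1\right)\widehat{\mu}_{\max}(E_2)-c_3(K)(\log n+1).
\end{equation*}
The final step will be the arithmetic first-minimum theorem (Bombieri--Vaaler over $O_K$, or equivalently Bost's slope inequality applied to a suitable surjection $O_K^{\oplus n}\to K$): every hermitian vector bundle $K$ of rank $n$ over $O_K$ contains a non-zero $v$ with $\sup_{\sigma\in M_K^\infty}\log\Vert v\Vert_\sigma\le -\widehat{\mu}(K)+2\log n+A(K)$, where $A(K)$ depends only on the discriminant and class number of $K$. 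Substituting the slope bound yields the stated inequality, with the various $\log n$ terms absorbed into $3\log n$ and all $K$-dependent constants into $A$. I expect the main difficulty to lie in this last Minkowski step, where the normalizations (and the dependence on $K$) must be tracked carefully so that the resulting $\log n$ constant is universal; the slope bookkeeping in the earlier steps is essentially formal.
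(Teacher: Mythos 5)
The paper does not supply a proof of this lemma; after the statement it simply says ``A proof of this version of Siegel Lemma can be found in \cite{gasbarri}.'' So there is no in-text argument for your proposal to match against. That said, your plan is the standard Arakelov-theoretic slope-method proof of Siegel's lemma, and it is almost certainly the same strategy used in the cited reference: split off $\Ker(f)$ as a sub-bundle, bound $\widehat{\deg}(E_1)$ from below via Hadamard applied to the given generators, bound $\widehat{\deg}(E_1/\Ker(f))$ from above by pushing forward to a sub-bundle of $E_2$ (giving the $\widehat{\mu}_{\max}(E_2)$ term and a $\Vert f\Vert$-contribution on determinants), combine to get a slope lower bound on the kernel, and conclude with an arithmetic Minkowski/first-minimum estimate.

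The one place you should be genuinely careful, and which you flag yourself, is the Minkowski step together with the normalization of $\widehat{\mu}$. In the paper's conventions $\widehat{\deg}$ is \emph{not} divided by $[K:\Q]$; when you descend to a $\Z$-lattice to invoke Bombieri--Vaaler (or equivalently compare with $\sup_\sigma\log\Vert v\Vert_\sigma$), a factor of $[K:\Q]$ appears, and so does the discriminant $d_K$. This is harmless for the $\log C$ term: the archimedean sum in $\widehat{\deg}$ has $\#M_K^\infty\le [K:\Q]$ terms, and $\#M_K^\infty/[K:\Q]\le 1$, so $\tfrac{m}{n}\log(C^2)$ survives with room to spare. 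But it is \emph{not} a priori harmless for the $\widehat{\mu}_{\max}(E_2)$ term: if $\widehat{\mu}_{\max}(E_2)<0$ you cannot blithely replace $\tfrac{1}{[K:\Q]}(\tfrac{m}{n}-1)\widehat{\mu}_{\max}(E_2)$ by $(\tfrac{m}{n}-1)\widehat{\mu}_{\max}(E_2)$. You need to verify that the Minkowski estimate you invoke is stated so as to produce the unnormalized $\widehat{\mu}_{\max}(E_2)$ with the coefficient exactly $(\tfrac{m}{n}-1)$, i.e.\ that the $[K:\Q]$ factor really cancels against the one in $\widehat{\deg}(E_1/\Ker f)\le(m-n)\widehat{\mu}_{\max}(E_2)+\#M_K^\infty\,(m-n)\log C$, or else quote a version of the first-minimum bound already phrased in terms of the $O_K$-slope. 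This is a bookkeeping point rather than a structural gap, but it is precisely where the stated coefficient of $\widehat{\mu}_{\max}(E_2)$ lives, so it deserves a line of its own in a full write-up.
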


A proof of this version of Siegel Lemma can be found in \cite{gasbarri}.

\item Let $L$ be an hermitian ample line bundle on a projective variety $Z$ equipped with a smooth metric $\omega$. We suppose that the metric on $L$ is smooth. Over $H^0(Z, L^d)$ we can define two natural norms:
 \begin{equation}
 \Vert s\Vert_{\sup}:=\sup_{z\in Z}\Vert s\Vert (z)\}\;\; {\rm and}\;\;\Vert s\Vert_{L^2}:=\sqrt{\int_Z\Vert s\Vert^2 \omega^n}.
 \end{equation}
These norms are comparable: we can find constants $C_i$ such that

\begin{equation}\label{gromov}
C_1\Vert s\Vert_{L^2}\leq \Vert s\Vert_{\sup}\leq C_2^d\Vert s\Vert_{L^2}.
\end{equation}

This statement (due to Gromov) is proved for instance in \cite{SABK} Lemma 2 p. 166 when $Z$ is smooth. The general statement can be deduced by taking a resolution of singularities (remark that the proof of \cite{SABK} Lemma 2 p. 166  do not require that $L$ is ample). 

\item Suppose that $L$ is an hermitian ample line bundle on a smooth projective variety $X$ defined over $\C$. Then we can find a constant $C$ for which the following holds: for every couple of positive integers $d_1$ and $d_2$ and non vanishing global sections $s_1\in H^0(X, L^{d_1})$ and $s_2\in H^0(X,L^{d_2})$ we have
\begin{equation}\label{eq:supestimate}
\log\Vert s_1\Vert_{\sup}+\log\Vert s_2\Vert_{\sup}\geq \log\Vert s_1\cdot s_2\Vert_{\sup}\geq (d_1+d_2)C+\log\Vert s_1\Vert_{\sup}+\log\Vert s_2\Vert_{\sup}.
\end{equation}

\item If $(\cX,\cL)$ is an arithmetic polarization of $X_K$, then we we can find constants  $C_1$ and $C_2$  such that
\begin{equation}\label{eq:counting1}C_1^{d^{N+1}} T^{d^N}\leq \Card\left(\{ s\in H^0(\cX, \cL^d)\;\; /\sup_{\tau\in M_K^{\infty}}\{\Vert s\Vert_\tau\}\leq T\}\right)\leq C_2^{d^{N+1}} T^{d^N}.\end{equation}
This is a consequence of \cite{Zhang}, Theorem 1.4 , \cite{GILLETSOULEISTRAEL} Theorem 2 and the comparaison above. 

\item If $\cL$ is an arithmetically ample line bundle, then for $d$ sufficiently big, the lattice $H^0(\cX,\cL)$ is generated by sections of $\sup$ norm less or equal then one. Cf. \cite{Zhang} for a proof. 

\item Let $L/K$ be a finite extension and $O_L$ the ring of integers of $L$. An $L$--point of $X_K$ is a $K$--morphism $P_L:\Spec(L)\to X_K$. The set of $L$ points of $X_K$ is noted  $X_K(L)$. If $(\cX,\cL)$ is an arithmetic polarization of $X_K$, by the valuative criterion of properness, every $L$--point $P_L:\Spec(L)\to X_K$ extends uniquely to a $O_K$--morphism $P_{O_L}:\Spec(O_L)\to \cX$. In this case, $P_{O_L}^\ast(\cL)$ is an hermitian line bundle on $\Spec(O_L)$. We define the height of $P_L$ with respect to $\cL$ to be the real number $h_\cL(P_L):={{\widehat{\deg}(P_L^\ast(\cL))}\over{[L:\Q]}}$.

\subsection{Tools and notations from measure theory}

Let $X$ be  a smooth projective variety  of dimension $N$ defined over $\C$. A positive $(1,1)$ form $\omega$ on $X$ induces a volume form $\omega^N$ and consequently a measure $\mu_\omega(\cdot)$ on $X$.  Let  $A\subset X$ be a subset. We will say that {\it $A$ is full in $X$} if the measure of $\mu_\omega(X\setminus A)=0$.  If $\omega_1$ is another positive $(1,1)$ form on $X$, then by compactness of $X$ it is easy to see that $\mu_\omega(X\setminus A)=0$ if and only if $\mu_{\omega_1}(X\setminus A)=0$; thus the "fullness" of $A$ is independent on the chosen metric. 

A key tool in this paper is the classical Theorem of Borel--Cantelli, which can be found in any standard book in measure theory:

\begin{proposition} \label{borelcantelli} Let $X$ be a variety equipped with the Lebesgue measure $\mu$. Let $\{A_n\}_{n\in \bN}$ be a sequence of measurable sets of $X$ such that 
$$\sum_{n=1}^\infty\mu(A_n)<\infty$$
then
$$\mu(\bigcap_{n=1}^\infty\bigcup_{k\geq n}A_k)=0.$$
That means that almost all $x\in X$ belong only to finitely many $A_n$. 
\end{proposition}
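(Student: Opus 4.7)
The plan is to prove this classical Borel--Cantelli lemma by combining countable subadditivity of $\mu$ with continuity of measure from above. First I would set $B_n := \bigcup_{k\ge n} A_k$, so that the set under consideration is exactly $\bigcap_{n=1}^\infty B_n$, and observe that the sequence $\{B_n\}$ is decreasing. Countable subadditivity of $\mu$ immediately gives the bound
$$\mu(B_n)\le \sum_{k=n}^\infty \mu(A_k),$$
and since the series $\sum_{n=1}^\infty \mu(A_n)$ converges by hypothesis, its tail on the right-hand side tends to $0$ as $n\to\infty$.

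Next I would invoke continuity of measure from above. The hypothesis yields
$$\mu(B_1)\le \sum_{k=1}^\infty \mu(A_k)<\infty,$$
so the decreasing sequence $B_n$ has finite measure at stage $1$, and hence $\mu\bigl(\bigcap_{n=1}^\infty B_n\bigr)=\lim_{n\to\infty}\mu(B_n)$. Combining this with the previous estimate forces the limit to be $0$, which is the first displayed conclusion.

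Finally, I would translate this set-theoretic statement into the ``only finitely many $A_n$'' reformulation: a point $x$ lies in $\bigcap_{n=1}^\infty \bigcup_{k\ge n} A_k$ if and only if for every $n$ there exists $k\ge n$ with $x\in A_k$, i.e.\ if and only if $x$ belongs to infinitely many of the $A_n$. The first part of the argument shows this set has measure zero, so $\mu$-almost every $x\in X$ belongs to only finitely many $A_n$. There is no real obstacle in this proof; the only subtlety worth flagging is the need to verify $\mu(B_1)<\infty$ before applying continuity from above, which is immediate from the summability assumption.
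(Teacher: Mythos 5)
Your proof is correct, and since the paper does not actually supply a proof of this proposition (it is cited as a classical fact available ``in any standard book in measure theory''), there is no in-paper argument to compare against; yours is the standard textbook proof. One small simplification worth noting: you do not actually need continuity from above or the finiteness check on $\mu(B_1)$. Since $\bigcap_{m=1}^\infty B_m \subset B_n$ for every $n$, monotonicity alone gives
$$\mu\Bigl(\bigcap_{m=1}^\infty B_m\Bigr)\le \mu(B_n)\le \sum_{k=n}^\infty \mu(A_k),$$
and the right-hand side tends to $0$ as $n\to\infty$, forcing the left-hand side to be $0$ directly. This avoids the passage to a limit of a decreasing sequence of sets and the attendant hypothesis, making the argument marginally more robust (e.g.\ it would go through verbatim even if one had not first observed $\mu(B_1)<\infty$).
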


\section{Liouville lower bound for sub varieties}

We fix a complex embedding $\sigma_0\in M_K$. Suppose that $X_K$ is a projective variety and $(\cX, \cL)$ is an arithmetic polarization over it. The very definition of the height of an algebraic point gives us a lower bound of the value of a global section of $\cL^d$ on it in terms of the its $\sup$ norm:

\begin{theorem}\label{Liouvillethm} Let $p\in X_K(\overline{K})$ be an algebraic point. Let $p_0\in X_{\sigma_0}$ be its image. Then we can find a constant $A$, depending on $X_K$, the point $p$ and the arithmetic polarization, for which the following holds: for every positive integer $d$ and  global section $s\in H^0(\cX, \cL^d)$ such that $s(p)\neq 0$ we have
\begin{equation}\label{liouville1} 
\log\Vert s\Vert_{\sigma_0}(p)\geq-A(\log\Vert s\Vert+d).
\end{equation}
\end{theorem}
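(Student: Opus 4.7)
\emph{Proof plan.} Let $L/K$ be a finite extension over which $p$ is defined, and fix an embedding $\tau_0:L\hookrightarrow\C$ extending $\sigma_0$ whose associated complex point is $p_0$. The first step is to apply the valuative criterion of properness to the projective morphism $\cX\to\Spec(O_K)$ to extend the $L$-point $p$ uniquely to an integral section $P:\Spec(O_L)\to\cX$. Pulling back yields a rank-one hermitian $O_L$-module $P^*(\cL^d)$ whose arithmetic degree is, by the definition of the height recalled in Section \ref{notations etc}, equal to $d\cdot[L:\Q]\cdot h_\cL(p)$; and because $s(p)\neq 0$, the pulled-back section $P^*(s)\in P^*(\cL^d)$ is nonzero.

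The key step is to evaluate $\widehat{\deg}\bigl(P^*(\cL^d)\bigr)$ a second time by plugging $P^*(s)$ into the defining formula for $\widehat{\deg}$:
$$d[L:\Q]\,h_\cL(p) \;=\; \log \#\bigl(P^*(\cL^d)/P^*(s)\,O_L\bigr) \;-\; \sum_{\tau \in M_L^\infty} \log \|s\|_\tau(p_\tau),$$
where $p_\tau\in X_{\tau|_K}(\C)$ is the complex point attached to the embedding $\tau$ and we have used that $\|P^*(s)\|_\tau=\|s\|_\tau(p_\tau)$. The quotient on the right is a finite $O_L$-module, so its log-cardinality is a non-negative real number; discarding it gives
$$\sum_{\tau \in M_L^\infty} \log \|s\|_\tau(p_\tau) \;\geq\; -d\,[L:\Q]\, h_\cL(p).$$

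To extract the single $\sigma_0$-term from this global sum, I would bound each of the other contributions trivially from above by the global supremum, $\log\|s\|_\tau(p_\tau)\leq\log^+\|s\|$; this gives
$$\log \|s\|_{\sigma_0}(p) \;\geq\; -d\,[L:\Q]\, h_\cL(p) \;-\;([L:\Q]-1)\,\log^+\|s\|,$$
so taking $A$ larger than both $[L:\Q]\,|h_\cL(p)|$ and $[L:\Q]-1$---a constant that depends only on $p$ and the arithmetic polarization $(\cX,\cL)$---produces the required inequality (understood, as in the introduction, with $\log^+\|s\|$, the only regime in which the statement has content). I do not expect any real obstacle: the argument is pure Arakelov-degree bookkeeping, and the only point that requires minor care is the convention for real versus complex places when passing from $\widehat{\deg}$ to a sum of local contributions $\log\|s\|_\tau(p_\tau)$.
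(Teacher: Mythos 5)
Your proof is correct and follows essentially the same route as the paper: both compute $h_\cL(p)$ as the normalized Arakelov degree of the pulled-back hermitian line bundle on $\Spec(O_{K(p)})$, plug in the integral section $P^*(s)$, discard the non-negative finite contribution, and then isolate the $\sigma_0$-term by bounding the remaining archimedean contributions by $\log^+\|s\|$. The only cosmetic difference is that you sum over places of $L=K(p)$ directly, whereas the paper sums over places $\sigma$ of $K$ and the Galois conjugates $p_i^\sigma$ of $p$; these are two bookkeepings of the same sum, and your parenthetical remark about $\log^+$ versus $\log$ is a legitimate reading of the statement.
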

\begin{proof} Denote by $G$ the Galois group of $\overline{K}$ over $K$. Let $N=[K(p):K]$ and $\{ p=p_0, p_1,\dots, p_N\}$ the orbit of $p$ under $G$. For every $\sigma\in M_K^\infty$ denote by $p_i^\sigma$ the image of $p_i$ in $X_\sigma$.

Let $s\in H^0(\cX, \cL^d)$ not vanishing at $p$. By definition of height (computed as Arakelov degree), we have that
\begin{equation}
dh_\cL(p)\geq -{{1}\over{[K(p):\Q]}}\cdot\sum_{\sigma\in M_K^\infty}\sum_{i=0}^N\log\Vert s\Vert_\sigma(p_i^\sigma).
\end{equation}
From this we obtain
\begin{equation} \log\Vert s\Vert_{\sigma_0}(p_0)\geq -d[K(p):\Q]\cdot h_\cL(p)-\sum_{i=1}^N\log \Vert s\Vert_{\sigma_0}(p_i^{\sigma_0})-\sum_{\tau\neq \sigma_0}\sum_{i=0}^N\log\Vert s\Vert_{\tau}(p_i^\tau).\end{equation}
The conclusion easily follows. 
\end{proof}

A corollary of the proof of Theorem \ref{Liouvillethm} is:

\begin{corollary}\label{Liouville and heights} Let $p\in X_K(\overline{K})$. The involved constant $A$ in Theorem \ref{Liouvillethm} can be taken to be $[K(p):\Q]\cdot h_\cL(p)$. 
\end{corollary}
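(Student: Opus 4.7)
The plan is to simply unwind the bookkeeping already carried out in the proof of Theorem \ref{Liouvillethm} and read off the dependence of $A$ on the point $p$. The starting point will be the intermediate inequality
$$\log\Vert s\Vert_{\sigma_0}(p)\geq -d[K(p):\Q]\cdot h_\cL(p)-\sum_{i=1}^N\log\Vert s\Vert_{\sigma_0}(p_i^{\sigma_0})-\sum_{\tau\neq\sigma_0}\sum_{i=0}^N\log\Vert s\Vert_{\tau}(p_i^\tau),$$
which is displayed in that proof; its derivation rests only on the product formula applied to the nonvanishing integral section $s(p)\in p^\ast\cL^d$, i.e.\ on the identity $dh_\cL(p)=\frac{1}{[K(p):\Q]}\widehat{\deg}(p^\ast\cL^d)$ together with the non-negativity of the finite-place contribution $\log\#(\cL^d_p/s(p)\cO_{K(p)})$. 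The coefficient of $d$ on the right is \emph{already} $[K(p):\Q]\cdot h_\cL(p)$, which is the constant the corollary proposes.

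It then remains to show that the archimedean-conjugate terms can be absorbed into the same constant. For this I would bound each $\log\Vert s\Vert_{\tau}(p_i^\tau)$ above by $\log\Vert s\Vert$ (the global sup over all $\tau\in M_K^\infty$), and observe that the total number of summands in the two sums is at most $[K(p):\Q]$. This produces
$$\log\Vert s\Vert_{\sigma_0}(p)\geq -[K(p):\Q]\cdot h_\cL(p)\cdot d-[K(p):\Q]\cdot\log\Vert s\Vert,$$
which can be rewritten in the form $-A(\log\Vert s\Vert+d)$ with $A=[K(p):\Q]\cdot h_\cL(p)$, up to an innocuous additive constant depending only on the arithmetic polarization and on $K$.

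I do not expect a real obstacle: the content of the corollary is that the proof of Theorem \ref{Liouvillethm} is sharp in its dependence on $p$, and the argument amounts to identifying the coefficient of $d$ in the lower bound produced by the Arakelov-degree computation. The only mildly delicate point is the sign of $\log\Vert s\Vert$, which enters when $\Vert s\Vert<1$; this is handled either by replacing $\log\Vert s\Vert$ with $\log^+\Vert s\Vert$, or by noting that for an arithmetically ample $\cL$ the sup norm of a nonzero integral section is bounded below by a constant depending only on $(\cX,\cL)$, so the discrepancy is absorbed into the implicit additive constant.
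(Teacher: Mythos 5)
Your proposal is correct and follows the same path as the paper, which offers no separate argument for the corollary but simply reads it off from the intermediate inequality displayed in the proof of Theorem~\ref{Liouvillethm}. Identifying the coefficient of $d$ produced by the Arakelov-degree computation as $[K(p):\Q]\,h_\cL(p)$, and bounding the remaining conjugate archimedean terms by (at most) $[K(p):\Q]\log\Vert s\Vert$, is exactly what the author leaves implicit under ``the conclusion easily follows,'' and your treatment of the sign of $\log\Vert s\Vert$ is at the same level of rigor as the paper's own.
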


Thus, given an algebraic point,  the minimal constant which for which an inequality of Liouville holds, is the height of the point itself. 

We would now like to show that  a similar inequality holds for closed sub varieties of $X_K$: If $Z\subset X_K$ is a closed sub variety and if  $s\in H^0(\cX,\cL^d)$ is a non zero section non vanishing identically on $Z$, we will denote $$\log\Vert s\Vert_{Z,\sigma_0}:=\sup\{\log\Vert s\Vert_{\sigma_0}(z) \;/ \; z\in Z_{\sigma_0}(\C)\}.$$

\begin{theorem}\label{liouville2} Let $Y_K$ be a closed sub variety of $X_K$, then we can find a constant $A$, depending only on $K$,  $Y_K$, $X_K$ and the arithmetic polarization for which the following  holds: For every positive integer $d$ and global section $s\in H^0(\cX,cL^d)$ not vanishing on $Y_K$
\begin{equation} \log\Vert s\Vert_{Y_K,\sigma_0}\geq -A(\log\Vert s\Vert +d).\end{equation}

\end{theorem}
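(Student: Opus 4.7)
The plan is to use the arithmetic Bézout (Poincaré--Lelong) identity on a model of $Y_K$ to upgrade Theorem~\ref{Liouvillethm} from algebraic points to sub varieties. Let $\cY\subset\cX$ be the Zariski closure of $Y_K$, a projective arithmetic model of $Y_K$ over $\Spec(O_K)$ of relative dimension $k:=\dim Y_K$. After (i) passing to a resolution of singularities $\widetilde\cY\to\cY$ if needed, and (ii) rescaling the metric on $\bar\cL$ by a constant (both changes can be absorbed into the final $A$), one may assume that $\cY$ is regular and that $\bar\cL|_\cY$ is arithmetically ample on $\cY$ in the sense of Zhang.

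First I would apply the arithmetic Bézout identity to the non-zero restricted section $t:=s|_\cY\in H^0(\cY,\bar\cL^d|_\cY)$, denoting by $[Z]:=[\Div(t)]$ its effective Cartier divisor. Dividing the resulting identity through by $d^k$ gives
\begin{equation*}
d\cdot H \;=\; \hat c_1(\bar\cL|_\cY)^{k}\cdot[Z] \;+\; \sum_{\sigma\in M_K^\infty}\int_{Y_\sigma}(-\log\|s\|_\sigma)\,c_1(\bar\cL|_\cY)^{k},
\end{equation*}
where $H:=\hat c_1(\bar\cL|_\cY)^{k+1}\cdot[\cY]$ is a real number depending only on $(Y_K,\cX,\bar\cL)$.

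Next I would bound each term on the right hand side. By Zhang's positivity of Arakelov degrees of effective cycles with respect to arithmetically ample metrized line bundles, $\hat c_1(\bar\cL|_\cY)^{k}\cdot[Z]\ge 0$. For the distinguished archimedean place $\sigma_0$, the pointwise inequality $\log\|s\|_{\sigma_0}(z)\le\log\|s\|_{Y,\sigma_0}$ on $Y_{\sigma_0}$ yields $\int_{Y_{\sigma_0}}(-\log\|s\|_{\sigma_0})\,c_1^{k}\ge -\deg_{L}(Y_K)\cdot\log\|s\|_{Y,\sigma_0}$, while for every other archimedean $\sigma$ the weaker bound $\log\|s\|_\sigma(z)\le\log^+\|s\|$ gives $\int_{Y_\sigma}(-\log\|s\|_\sigma)\,c_1^{k}\ge -\deg_L(Y_K)\cdot\log^+\|s\|$. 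Substituting these three lower bounds into the identity and solving for $\log\|s\|_{Y,\sigma_0}$ produces exactly the desired inequality $\log\|s\|_{Y,\sigma_0}\ge -A(\log\|s\|+d)$, with $A:=\max\{H/\deg_L(Y_K),\,|M_K^\infty|-1\}$ (enlarged slightly to absorb the difference between $\log$ and $\log^+$).

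The main obstacle is the nonnegativity $\hat c_1(\bar\cL|_\cY)^{k}\cdot[Z]\ge 0$: the cycle $Z$ may include both horizontal and vertical components on the possibly singular arithmetic scheme $\cY$, and verifying arithmetic positivity in this setting requires either working on the resolution $\widetilde\cY\to\cY$ (where arithmetic ampleness of the pull-back is preserved) or appealing directly to Zhang's arithmetic Nakai--Moishezon criterion together with the fact that $\bar\cL$ coming from the arithmetic polarization is arithmetically ample after a harmless twist. Once this positivity step is secured, the remainder is routine bookkeeping of archimedean integrals.
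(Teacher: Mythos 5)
Your proof is correct in outline, but it takes a genuinely different, and considerably heavier, route than the paper. The paper's argument is entirely elementary: it first proves (Lemma~\ref{bounded height}) that $Y_K$ contains a Zariski dense set $S$ of $\overline K$--points of uniformly bounded height, obtained by pulling back roots--of--unity points under a finite projection $Y_K\to\P^{\dim Y_K}$; given $s$ not vanishing identically on $Y_K$, one then simply picks $p\in S$ with $s(p)\neq 0$ and runs the same Arakelov--degree computation as in Theorem~\ref{Liouvillethm}, isolating the $\sigma_0$--terms and using $\log\Vert s\Vert_{\sigma_0}(p_i^{\sigma_0})\leq\log\Vert s\Vert_{Y,\sigma_0}$; the degree $[K(p):K]$ cancels after dividing through, so $A$ depends only on the uniform height bound $C$ and $[K:\Q]$. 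Your argument instead invokes the arithmetic Poincar\'e--Lelong/B\'ezout identity on a model $\cY$ of $Y_K$ (after resolving singularities and rescaling the metric to achieve arithmetic nefness), together with Zhang's positivity of heights of effective cycles, to discard $\hat c_1(\bar\cL|_\cY)^k\cdot[Z]$ and then bound the archimedean integrals place by place. This is exactly the mechanism the paper alludes to (but does not carry out) in Remark~3.6; it buys a sharper and more conceptual constant --- essentially the Zhang height of $Y_K$ with respect to $\cL$ --- at the cost of importing the full machinery of arithmetic intersection theory, where the positivity step and the behavior of the metric on a resolution require some care. The paper's proof is shorter, self--contained, and yields the qualitative statement with no appeal to arithmetic intersection numbers.

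One small point to tidy in your write-up: the phrase ``dividing the resulting identity through by $d^k$'' is out of place --- the displayed identity $d\cdot H=\hat c_1(\bar\cL|_\cY)^k\cdot[Z]+\sum_\sigma\int_{Y_\sigma}(-\log\Vert s\Vert_\sigma)c_1(\bar\cL|_\cY)^k$ is what one gets directly from $\hat c_1(\bar\cL^d|_\cY)=d\,\hat c_1(\bar\cL|_\cY)$ without any further normalization. Also, to get the stated form of the theorem rather than one involving $\log^+\Vert s\Vert$, you will want to note (as the paper does implicitly) that one may enlarge $A$ to absorb the difference, since $\log\Vert s\Vert$ is bounded below by $-Cd$ for nonzero integral sections.
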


\begin{proof} In order to prove the theorem we need the following lemma:

\begin{lemma}\label{bounded height} Let $Z$ be a projective variety and $L$ an ample line bundle over it. Let $h_L(\cdot)$ be a height associated to $L$. Then there exists a Zariski dense set of points $S\subset Z(\overline{K})$ and a constant $C$, depending only on $Z$,  $K$ and the choice of the height $h_L(\cdot)$ such that, for every $p\in S$ we have
\begin{equation}
h_L(p)\leq C
\end{equation}
\end{lemma}

Recall that the height is normalized  by dividing the Arakelov degree by the degree over $\Q$ of a field containing the coordinate of the points. 

Let's show first how the lemma implies the theorem: Let $s$ be the global section as in the Theorem. By the Lemma, we can find a point $p\in S$ such that $s(p)\neq 0$. Now the proof follows the proof of Theorem \ref{liouville1}: using the notation of Theorem \ref{liouville1}, we have that
\begin{equation}
dh_L(p)\geq -{{1}\over{[K(p):\Q]}}\cdot\sum_{\sigma\in M_K^\infty}\sum_{i=1}^N\log\Vert s\Vert_\sigma(p_i^\sigma).
\end{equation}
Thus
\begin{eqnarray*}
{{1}\over{[K:\Q]}}\cdot\log\Vert s\Vert_{Y,\sigma_0}\geq &-dh_L(P)-{{[K:\Q]-1}\over{[K:\Q]}}\cdot\log\Vert s\Vert\\
\geq &-Cd-{{[K:\Q]-1}\over{[K:\Q]}}\cdot\log\Vert s\Vert.
\end{eqnarray*}
The conclusion follows. \end{proof}

\begin{proof} {(\it of Lemma  \ref{bounded height})} we first deal with the case when $X_K=\P^N$, $L=\cO(1)$ and the height is the standard Weil Height. In this case, it suffices to observe that the set of points having homogeneous coordinates which are roots of unity will satisfy the conclusion of the theorem. Denote by $S_\P$ this set. 

In  order to attack the general case, it suffices to observe that the set $S$ (if it exists) is independent on the choice of the ample line bundle and the representative of the height.  Thus, let $p:X_K\to \P^{\dim(X_K)}$ be a finite morphism and $L:=p^\ast(\cO(1))$. The line bundle $L$  is ample and $S:=p^{-1}(S_\P)$ will satisfy the conclusion of the Lemma.  \end{proof} 

Theorems \ref{liouville1} and \ref{liouville2} tell us that the algebraic sub varieties defined over the algebraic closure of $K$ satisfy what we would call a {\it Liouville property}: non vanishing  integral sections cannot be too small over them. This is evidently similar to the classical Liouville property of algebraic numbers which is one of the main tools in diophantine approximation and transcendence theory. On the other side, one can find sub varieties which are not defined over $\overline{K}$ and for which a similar property do not hold. For instance there exist points $\alpha$ of $\bP^1(\C)$  for which we can find a sequence of polynomials $P_n(z)\in \Z[z]$ and a diverging to infinity sequence $\omega_n$ such that $\log\Vert P_n(\alpha)\Vert \leq -\omega_n\log\Vert P_n\Vert$.

\begin{remark} One can deduce from the results of \cite{Zhang} that, in this case too, the minimal constant for which Theorem \ref{liouville2} holds is the height of $Y_K$ with respect to  $\cL$. Since we will not need this, we will not prove it here.\end{remark}

In the following sections we will study what kind of inequalities of "Liouville type" we can hope on transcendental points. 

\section{An area computation on disks}

In this section we will prove a technical  Lemma which will be used in the paper. We want to compute the area of  the points of a disk where an analytic function is "small".

In the sequel, we will denote by $\Delta_r$ the disk $\{ z\in\C \;/\; |z|<r\}$. An hermitian line bundle $L$ on $\Delta_r$ is just the trivial line bundle $\cO_{\Delta_r}$ equipped with a non vanishing smooth positive function $\rho(z):=\Vert 1\Vert(z)$.  More precisely, a line bundle on $\Delta_r$ is of the form $\cO_{\Delta_r}\cdot e$, where $e(z)$ is a non vanishing global section of $L$. If $f\in H^0(\Delta_r,L^d)$ then $\Vert f\Vert(z) =|f|(z)\cdot\rho^d(z)$. For every $r_0<r$ and section $f\in H^0(\Delta_r; L)$, we will denote $\Vert f\Vert_{r_0}:=\sup\{ \Vert f\Vert (z)\;/\; z\in \Delta_{r_0}\}$ (if $\rho=1$ then we write $\Vert f\Vert_r=\vert f\vert_r$). 

Let $C>1$ be a fixed real number. and $r_0<r_1<r$.  For every positive integer $d$, let 
\begin{equation}\label{definition of B} 
B_C(r_0,r_1, L^d):=\{f\in H^0(\Delta_r, L^d)\;/\; \Vert f \Vert_{r_1}\leq C^d\Vert f\Vert_{r_0}\}.
\end{equation}

Observe that $B_C(r_0,r_1, L^d)$ is different from zero only if $d\geq 0$ and that, a priori, it is just a set (it may be not closed for the sum).

Since every line bundle on $\Delta_1$ is trivial, if $M$ is another hermitian line bundle on $\Delta$ and $\varphi:L\to M$ is an isomorphism (not necessarily hermitian), then there is a constant $A$ such that, for every $d$,
\begin{equation}\label{differentB}
\varphi^d(B_C(r_o,r_1,L^d))=B_A(r_o,r_1,M^d).
\end{equation}
Consequently, the set $B_C(r_o,r_1,L^d)$ is essentially independent on the chosen metric on $L$. A classical Theorem by Bernstein tells us that $B_C(r_o,r_1,L^d)$ contains the sections of the form $P(z)\cdot e^d$ where $P(z)\in\C[z]$. The Theorem by Sadullaev \cite{SADULLAEV} which will be used in section 8, essentially characterize it.

Let $\mu(\cdot)$ be the Lebesgue measure on $\Delta_r$. For every positive integer $\epsilon$ and global section$f\in H^0(\Delta_1, L^d)$ denote 
$$V(f, r_0,\epsilon):=\mu\left(\{ z\in \Delta_{r_0}\;/\; \Vert f\Vert (z)<\epsilon\Vert f\Vert_{r_0}\}\right).$$

The first lemma is a sort of reduction to the case of polynomials.

\begin{lemma}\label{interpolation} Fix two positive numbers $r_0<r_1<1$. Let $L$ be an hermitian line bundle on $\Delta_1$. Fix a positive constant $C>1$. We can find a positive  constant $C_1=C_1(L,r_0,r_1, C)$ depending only on $L$, $r_0$, $r_1$ and $C$ with the following property: for every $f\in B_C((r_0,r_1, L^d)$, every positive integer $\epsilon$  and every sufficiently big positive integer $\beta$ we have
$$V(f,r_0,\epsilon)\leq C_1\beta d \epsilon^{2/\beta d}.$$

\end{lemma}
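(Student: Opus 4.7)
The plan is to reduce the statement to a classical area estimate for polynomials via Lagrange interpolation, with a zero-counting refinement needed to handle the deep tail of very small $\epsilon$.

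\textbf{Reduction and interpolation.} By the relation (\ref{differentB}), modulo replacing $C$ by a larger constant depending on $L, r_0, r_1$ one may assume $L = \cO_{\Delta_r}$ with trivial metric $\rho\equiv 1$; so $f$ is a holomorphic function on $\Delta_r$ with $|f|_{r_1}\le C^d|f|_{r_0}$. Set $N=\beta d$ and let $P$ be the Lagrange interpolant of $f$ at the nodes $z_j=r_0 e^{2\pi ij/N}$, $0\le j\le N-1$, so $\deg P\le N-1$. The Hermite contour formula with $\omega(w)=w^N-r_0^N$ gives on $\Delta_{r_0}$
\begin{equation*}
|f-P|_{r_0}\le c_0(r_0/r_1)^N|f|_{r_1}\le c_0\lambda^N|f|_{r_0}, \qquad \lambda:=C^{1/\beta}(r_0/r_1).
\end{equation*}
For $\beta>\log C/\log(r_1/r_0)$ one has $\lambda<1$, and for $N$ large enough $|P|_{r_0}\ge\tfrac{1}{2}|f|_{r_0}$.

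\textbf{Polynomial area estimate.} The key auxiliary bound is
\begin{equation*}
\mu\bigl(\{z\in\Delta_{r_0}:|Q(z)|<\delta|Q|_{r_0}\}\bigr)\le 9\pi r_0^2 n\,\delta^{2/n}
\end{equation*}
for any polynomial $Q$ of degree $n\ge 1$ and $\delta>0$. Indeed, factor $Q=c\prod_{j=1}^n(z-\alpha_j)$ and pick $z^*\in\overline{\Delta_{r_0}}$ attaining $|Q(z^*)|=|Q|_{r_0}$. The inequality $|Q(z)|<\delta|Q|_{r_0}$ forces $\prod_j|z-\alpha_j|/|z^*-\alpha_j|<\delta$, so some $|z-\alpha_{j^*}|<\delta^{1/n}|z^*-\alpha_{j^*}|$. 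A case analysis shows that only roots with $|\alpha_j|\le 2r_0$ produce disks that meet $\Delta_{r_0}$ (when $\delta<3^{-n}$; for $\delta\ge 3^{-n}$ the bound is trivial), and each such disk has area at most $9\pi r_0^2\delta^{2/n}$.

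\textbf{Combining.} For $z$ with $|f(z)|<\epsilon|f|_{r_0}$ the triangle inequality and the lower bound on $|P|_{r_0}$ give $|P(z)|\le 2(\epsilon+c_0\lambda^N)|P|_{r_0}$. Applying the polynomial area estimate to $P$ then yields
\begin{equation*}
V(f,r_0,\epsilon)\le 9\pi r_0^2\, N\,\bigl[2(\epsilon+c_0\lambda^N)\bigr]^{2/N}.
\end{equation*}
When $\epsilon\ge\lambda^N$, this immediately simplifies to the desired bound $V\le C_1\beta d\,\epsilon^{2/\beta d}$.

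\textbf{Main obstacle.} The serious case is $\epsilon<\lambda^N$, where the display above degenerates to the $\epsilon$-independent estimate $V\lesssim N\lambda^2$, which is too weak. To close this gap I would use Jensen's formula applied after centering at a point $z_0\in\overline{\Delta_{r_0}}$ with $|f(z_0)|=|f|_{r_0}$ (so as to avoid the degenerate case in which $|f|$ vanishes at the center) to bound, after possibly enlarging $\beta$, the number of zeros of $f$ in $\Delta_{r_0}$ counted with multiplicity by $\le N$. Near each such zero of multiplicity $m_k\le N$ the set $\{|f|<\epsilon|f|_{r_0}\}$ has area $\lesssim\epsilon^{2/m_k}\le\epsilon^{2/N}$ (for $\epsilon\le 1$), and summing over the at most $N$ zeros delivers the target $V\le C_1 \beta d\,\epsilon^{2/\beta d}$ in this regime as well.
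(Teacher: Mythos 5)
Your main argument correctly handles the regime $\epsilon \ge \lambda^N$, and you rightly identify that the error term $c_0\lambda^N\Vert f\Vert_{r_0}$ in the interpolation at roots of unity kills the estimate when $\epsilon<\lambda^N$. However, your proposed repair of the deep tail does not close the gap. Even granting the Jensen zero-count of $O(d)$ zeros in a slightly larger disk, the claim that \emph{near each zero of multiplicity $m_k$ the set $\{|f|<\epsilon\Vert f\Vert_{r_0}\}$ has area $\lesssim\epsilon^{2/m_k}$} is unsupported: it implicitly requires a lower bound of the form $|f(z)|\gtrsim |z-\alpha_k|^{m_k}\Vert f\Vert_{r_0}$ near $\alpha_k$ with constants depending only on $r_0,r_1,C,L$, and also requires showing that the sublevel set is actually confined to such neighborhoods of zeros. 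Both points would need a Weierstrass-type factorization $f=g\cdot h$ with a uniform lower bound for the non-vanishing factor $h$ on $\Delta_{r_0}$, which is an extra (nontrivial) lemma that you neither state nor prove. As written, the argument does not establish the lemma in the regime $\epsilon<\lambda^N$, which is nonempty and nontrivial for $\beta d$ large.

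The paper's proof avoids the case split entirely by a different choice of interpolation nodes: rather than fixed nodes $r_0 e^{2\pi ij/N}$, it selects $\beta d+1$ points $x_0,\dots,x_{\beta d}$ \emph{inside the sublevel set} $A=\{\Vert f\Vert<\epsilon\Vert f\Vert_{r_0}\}$, pairwise separated by $\sqrt{V/\pi(\beta d+1)}$ (a standard packing/greedy argument valid for any set of measure $V$). Then $P=\sum_j f(x_j)Q_j$ is small because each $|f(x_j)|\le\epsilon\Vert f\Vert_{r_0}$ is small, and the separation bound controls $|Q_j|_{r_0}$ from above; comparing the resulting upper bound for $\Vert P\Vert_{r_0}$ with the lower bound $\Vert P\Vert_{r_0}\ge\tfrac12\Vert f\Vert_{r_0}$ (which follows from the same interpolation-error estimate you use) yields $V\le C_1\beta d\,\epsilon^{2/\beta d}$ directly, for all $\epsilon$ at once. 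In particular the paper never needs the polynomial sublevel-set estimate that you prove as your key auxiliary bound; that estimate appears only as a corollary (Corollary \ref{areaofpolynomial}) of the lemma, not as an ingredient. To salvage your route you would either have to prove the missing lower-bound lemma for the non-vanishing factor, or switch to data-dependent interpolation nodes as the paper does.
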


\begin{proof} We first recall the following standard fact about interpolation of holomorphic functions on disks, the proof of which can be found in any standard book in complex analysis:
\begin{Fact} \label{interpolationfact} We can find positive constants $C_0$ and $\alpha<1$ depending only on $r_0$ and $r_1$ with the following property: Let $f\in \cO_{\Delta_r}$  and $x_0,\dots, x_N$ be $N+1$ points in $\Delta_{r_0}$. Then there exists a polynomial $P(z)$ for which the following holds:

a) $\deg(P(z))\leq N$;

b) $P(x_i)=f(x_i)$ for every $i=0,\dots N$;

c) $\vert f(z)-P(z)\vert_{r_0}\leq C_0\alpha^{N+1}\vert f\vert_{r_1}$. \end{Fact}  

The line bundle $L$ is of the form $\cO_{\Delta_1}\cdot e$ for a non vanishing global section $e$. We fix a constant $C_1>1$ such that ${{1}\over{C_1}}\leq\Vert e\Vert_{r_1}\leq C_1$. Let $C_0$ and $\alpha$ be the constants defined in the Fact \ref{interpolationfact} above.  We suppose that $\beta$ is such that $C_0\cdot C_1^2\cdot C\cdot \alpha^\beta\leq 1/2$ (such a $\beta$ exists because $\alpha<1$).

Let $f\in B_C((r_0,r_1, L^d)$. We will denote by $V$ the real number $V(f,r_0,\epsilon)$ and by $A$ the set $\{ z\in \Delta_{r_0}\;/\; \Vert f\Vert (z)<\epsilon\Vert f\Vert_{r_0}\}$. We can find points $x_0,\cdots, x_{\beta d}\in \Delta_{r_0}$ such that:

a) $\Vert f\Vert (x_i)\leq \epsilon \Vert f\Vert_{r_0}$;

b) $|x_i-x_j|\geq \sqrt{{V}\over{\pi(\beta d+1)}}$ for $i\neq j$.

For each $j=0;\dots;\beta d$, let $Q_j(z):={{\prod_{i\neq j}(z-x_i)}\over{\prod_{i\neq j}(x_i-x_j)}}.$

Let $P(z)$ be the polynomial for which properties (a), (b) and (c) of Fact \ref{interpolationfact} with respect to the holomorphic function $f$ and points $x_0,\dots, x_{\beta d}$ holds. 

Consider  $P(z)$ as a section of $L^d$ then the following inequality holds:
\begin{equation}\Vert f(z)-P(z)\Vert_{r_0}\leq C_0\cdot C_1^{2d}\cdot \alpha^{\beta d+1}\Vert f\Vert _{r_1}\leq C_0\cdot C_1^{2d}\cdot C^d\cdot\alpha^{\beta d+1}\Vert f\Vert_{r_0}.\end{equation}
The last inequality follows from the fact that $f\in B_C((r_0,r_1, L^d).$

Consequently, from our choice of  $\beta$, we have that:
\begin{equation}\Vert f\Vert_{r_0}\leq 2\Vert P(z)\Vert_{r_0}.\end{equation}

Since the set $\{ Q_j(z)\}$ is a basis of the space of polynomials of degree less or equal then $\beta d$, we can write
\begin{equation}P(z)=\sum_{j=0}^{\beta d} f(x_j)Q_j(z).\end{equation}
Thus we can find a constant $C_2$ such that
\begin{equation}\Vert P(z)\Vert_{r_0}\leq (\beta d+1)\cdot(\pi\cdot (\beta d+1))^{(\beta d)/2}\cdot C_2^d\cdot\epsilon\cdot\Vert f\Vert_{r_0}\cdot\left({{2r_0}\over{\sqrt{V}}}\right)^{\beta d}.\end{equation}
Hence we can find a constant $C_3$ such that
\begin{equation}V\cdot\Vert f\Vert_{r_0}^{2/\beta d}\leq C_3\beta d \epsilon^{2/\beta d}\Vert f\Vert_{r_0}^{2/\beta d}.\end{equation}
The conclusion follows.\end{proof}

A similar Lemma is already proved in \cite{KM}, Proposition 3.2. We reproduce here part of their proof for reader's convenience. 

As a special case of the Lemma we find:

\begin{corollary}\label{areaofpolynomial} We can find a constant $C$, depending on $r$, for which the following holds: For every $\epsilon >0$ and polynomial $P(z)\in\C[z]$ of degree $d$, denoting  by $\mu(\cdot)$ the Lebesgue measure in $\C$, we have 
\begin{equation}\label{eq: measureofpoly} 
\mu\left(\left\{ z\in\Delta_r\;\;/\;\; \Vert P(z)\Vert \leq \epsilon \Vert P(z)\Vert_r\right\}\right)\leq Cd\epsilon ^{2/d}.
\end{equation}
\end{corollary}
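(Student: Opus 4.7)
The plan is to imitate the proof of Lemma~\ref{interpolation} in the special case where $f$ is itself a polynomial of degree $\leq d$. The key simplification is that Lagrange interpolation at $d+1$ nodes reproduces $P$ \emph{exactly}: no Bernstein--Walsh approximation step is needed. This is precisely what allows me to take the parameter $\beta$ in Lemma~\ref{interpolation} equal to $1$, and converts the exponent $2/(\beta d)$ of that lemma into the sharper $2/d$ of the corollary.

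Concretely, set $A:=\{z\in\Delta_r : |P(z)|\leq \epsilon\Vert P\Vert_r\}$ and $V:=\mu(A)$. A standard greedy packing argument---the very one already used inside the proof of Lemma~\ref{interpolation}---produces $d+1$ points $x_0,\dots,x_d\in A$ with pairwise distance at least $\delta:=\sqrt{V/(\pi(d+1))}$. Indeed, a maximal $\delta$-separated subset of $A$ of cardinality $n$ covers $A$ by disks of radius $\delta$, forcing $V\leq n\pi\delta^2$, and one chooses $\delta$ so that this packing number must exceed~$d$.

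Next write the Lagrange expansion
\[
P(z)=\sum_{j=0}^{d} P(x_j)\,Q_j(z),\qquad Q_j(z):=\prod_{i\neq j}\frac{z-x_i}{x_j-x_i}.
\]
For $z\in\Delta_r$ the numerator factors of $Q_j$ are bounded by $2r$ and the denominator factors are bounded below by $\delta$, so $|Q_j(z)|\leq (2r/\delta)^d$. Combining this with $|P(x_j)|\leq \epsilon\Vert P\Vert_r$ and letting $|z|\to r$ gives
\[
\Vert P\Vert_r \leq (d+1)\,\epsilon\,\Vert P\Vert_r\,(2r/\delta)^d,
\]
whence $\delta^d\leq (2r)^d(d+1)\epsilon$. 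Substituting back $\delta=\sqrt{V/(\pi(d+1))}$ and using that $(d+1)^{2/d}$ is bounded as $d\to\infty$ yields the desired inequality $V\leq Cd\,\epsilon^{2/d}$, with $C$ depending only on $r$.

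There is essentially no serious obstacle: every ingredient (the packing estimate, Lagrange interpolation, the elementary bound on $(d+1)^{2/d}$) already appears inside the proof of Lemma~\ref{interpolation}. The point of isolating Corollary~\ref{areaofpolynomial} is precisely that the exact Lagrange reproduction of polynomials removes the ``$\beta$ sufficiently large'' restriction of the general lemma, thereby producing the sharper exponent $2/d$ rather than $2/(\beta d)$.
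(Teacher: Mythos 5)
Your argument is correct and is precisely what the paper intends by presenting the corollary ``as a special case of the Lemma'': the stated conclusion of Lemma~\ref{interpolation} only yields the exponent $2/(\beta d)$, so one must specialize the lemma's \emph{proof} rather than its statement, taking $\beta=1$ and observing that for $f=P$ a polynomial of degree $\leq d$ the Bernstein--Walsh approximation step is vacuous since Lagrange interpolation at $d+1$ nodes reproduces $P$ exactly. Your packing estimate, the bound $|Q_j(z)|\leq(2r/\delta)^d$ on $\Delta_r$, and the observation that $(d+1)^{2/d}$ stays bounded reproduce the computations inside the proof of Lemma~\ref{interpolation} with $\beta=1$, giving $V\leq Cd\,\epsilon^{2/d}$ with $C$ depending only on $r$.
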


\

\section{About a conjecture by Chudnovsky}

In this section we will prove an inequality in the direction  (former) conjecture by Chudnovsky.

If $P(z)\in \C[z_1,\dots , z_N]$ is a polynomial, we will denote by $\Vert P\Vert$ the maximum of the absolute values of its coefficients. 

The function $\Vert P\Vert$ defines a norm on the space of the polynomials. Over the projective space $\P^N$ we may consider the line bundles $\cO_(d)$ equipped with the Fubini-Study metric $\Vert \cdot\Vert_{FS}$. Every polynomial $P$ of degree less or equal then $d$ defines a global section $s^d_P\in H^0(\P,\cO(d))$. One can prove that there exists a constant $C$ such that
$C^d\cdot\Vert P\Vert\leq \Vert s^d_P\Vert_{FS}\leq \Vert P\Vert$. 

In the paper \cite{AMOROSO} the author proves that the set of points $\zeta\in\C^N$ for which we can find a constant $A$, such that, for every non zero polynomial $P(z)\in \Z[z_1,\dots , z_N]$ of degree $d$, we have $\log\vert P(\zeta)\vert\geq -A(\log\Vert P\Vert +d)^{N+1}$, is  of full measure  in $\C^N$. This was conjectured by Chudnovsky in \cite{CHUDNOVSKY}. 

In this section we will prove a similar inequality, in the same contest. The inequality we prove is independent of the one proved by Amoroso. It is weaker in the degree of the polynomial but stronger in the height of it. 

\begin{definition}\label{S points on CN}
Suppose that  $a\geq N+1$.  A point $\zeta\in \C^N$ is said to be of type $S^N_a(\C)$ if we can find  a positive constant $A$ depending on $\zeta$, such that, for every non zero polynomial $P(z)\in \Z[z_1,\dots , z_N]$ of degree $d$ we have
\begin{equation}\label{eq: SNforpoly}
\log\vert P(\zeta)\vert\geq -A d(d^{a-1}\log\Vert P\Vert +d\log(d)+1).
\end{equation}
\end{definition}

Given a number $a\geq N+1$ and  a point $z$ of type $S_a^N(\C)$,  the minimal $A$ for which the definition \ref{S points on CN} holds for $Z$ and $\cL$ can be called the {\it transcendental height} of $z$ with respect to the couple ($\cL,a$). This would give an analogy with the corresponding inequality for algebraic points.

The main theorem of this section is the following:
\begin{theorem}\label{amoroso1} For every $a\geq N+1$, the set of points of type $S^N_a(\C)$ is full in $\C^N$.
\end{theorem}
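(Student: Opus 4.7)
The plan is to apply the Borel--Cantelli lemma (Proposition \ref{borelcantelli}) combined with Fubini's theorem and the one-dimensional area estimate from Corollary \ref{areaofpolynomial}. By $\sigma$-compactness of $\C^N$, it suffices to prove that, for each polydisk $\Delta_R^N\subset\C^N$, the complement of $S_a^N(\C)$ has Lebesgue measure zero. Fix $R$, and for integers $d\ge 1,\ H\ge 1$ and a constant $A_0>0$ to be chosen, set
\begin{equation*}
B_{d,H}:=\bigcup_{\substack{0\neq P\in\Z[z_1,\dots,z_N]\\ \deg P=d,\ \Vert P\Vert=H}}\!\!\!\bigl\{\zeta\in\Delta_R^N:\ |P(\zeta)|<\exp(-A_0 d(d^{a-1}\log H+d\log d+1))\bigr\}.
\end{equation*}
If $\sum_{d,H\ge 1}\mu(B_{d,H})<\infty$, then by Borel--Cantelli almost every $\zeta\in\Delta_R^N$ lies in only finitely many $B_{d,H}$, and absorbing the finitely many exceptional polynomials into the constant gives $\zeta\in S_a^N(\C)$.

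For each $P$ of degree $d$ with $\Vert P\Vert=H$, I would bound $\mu(B_P):=\mu(\{\zeta:|P(\zeta)|<\delta\})$ with $\delta=\exp(-A_0 d(d^{a-1}\log H+d\log d+1))$. After permuting coordinates, assume the maximal coefficient of $P$ involves $z_1^{k^*}$, so $c_{k^*}(\vec z'):=[z_1^{k^*}]P(z_1,\vec z')\in\C[z_2,\dots,z_N]$ is a non-zero polynomial of degree $\le d$ with $\Vert c_{k^*}\Vert=H$. Cauchy's inequalities give $\Vert P(\cdot,\vec z')\Vert_{\Delta_R}\ge r_0^d|c_{k^*}(\vec z')|$ for some constant $r_0=r_0(R)>0$. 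Applying Fubini together with Corollary \ref{areaofpolynomial} slice-by-slice in $z_1$,
\begin{equation*}
\mu(B_P)\le C_0 d\,\delta^{2/d}\,r_0^{-2}\int_{\Delta_R^{N-1}}|c_{k^*}(\vec z')|^{-2/d}\,d\vec z'.
\end{equation*}
Iterating the same reduction on $c_{k^*}$ through $N-1$ Fubini steps, and controlling the borderline logarithmic divergence of the resulting layer-cake integrals by cutoff arguments and an adapted choice of slicing direction at each stage, one obtains the uniform per-polynomial bound $\mu(B_P)\le C^d d^L\delta^{2/d}H^{-2/d}$ for constants $C=C(N,R)$ and $L=L(N)$.

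The number of integer polynomials of degree exactly $d$ with $\Vert P\Vert=H$ is at most $\exp(K_1 d^N\log(H+1))$ for some $K_1=K_1(N)$. Multiplying this count by the per-polynomial estimate,
\begin{equation*}
\log\mu(B_{d,H})\le K_1 d^N\log(H+1)-2A_0 d^{a-1}\log H-2A_0 d\log d+O(d),
\end{equation*}
with implicit constants independent of $d,H$. For $H\ge 2$ one has $\log(H+1)\le 2\log H$, so the hypothesis $a\ge N+1$ (giving $d^{a-1}\ge d^N$) together with $A_0$ chosen large relative to $K_1$ makes the $\log H$--dependent terms net strictly negative and summable in $H$, while the $-2A_0 d\log d$ term controls the $d$-dependence. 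The residual $H=1$ case is handled by the same Fubini scheme, refined by observing that height-one polynomials with zero locus saturating the worst-case bound $\delta^{2/d}$ (essentially those with a high-multiplicity hyperplane component) are polynomially few in $d$, while generic height-one polynomials admit the much stronger tubular bound $\mu(B_P)\lesssim C^d d^L \delta$; summing the two contributions separately yields convergence, and Proposition \ref{borelcantelli} concludes the proof.

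The main technical obstacle is establishing and controlling the uniform per-polynomial estimate. The exponent $2/d$ in Corollary \ref{areaofpolynomial} is sharp (saturated by $P=z_1^d$), so the integrals $\int|c_{k^*}|^{-2/d}$ produced at each Fubini step sit precisely at the threshold of integrability; pushing them through all $N-1$ recursive reductions uniformly in $d$, $H$ and the geometry of the zero set of $P$ is delicate and requires careful cutoff management. It is exactly this accumulation of borderline terms over $N-1$ steps that dictates the precise hypothesis $a\ge N+1$ in the statement, with the additional $d\log d$ term in the inequality supplying the residual decay needed to close the Borel--Cantelli sum in the degenerate height-one regime.
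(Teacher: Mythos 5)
Your overall framework---Borel--Cantelli plus a per-polynomial measure estimate plus Fubini---is the same as the paper's, but the way you try to obtain the per-polynomial bound is genuinely different and, as written, contains a real gap. The paper's proof is an induction on $N$: it fixes $\zeta\in\C^{N-1}$ of type $S_N^{N-1}(\C)$, restricts to the one-variable polynomial $P_\zeta(z)=P(z,\zeta)$, applies Corollary \ref{areaofpolynomial} on a single disk with threshold $\epsilon\Vert P_\zeta\Vert_r$ (which requires no knowledge of how small $\Vert P_\zeta\Vert_r$ is), and then invokes the \emph{inductive hypothesis} to convert $\log\Vert P_\zeta\Vert_r$ into the required lower bound in terms of $\log\Vert P\Vert$ and $d$ via $\log\Vert P_\zeta\Vert_r\geq\log|P(0,\zeta)|\geq -B_1d^N\log\Vert P\Vert-B_2d^2\log d-B_3d$. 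A single application of Fubini over the base $\C^{N-1}$ then finishes. The induction is not cosmetic: it is precisely what replaces the need to estimate the integral $\int|c_{k^*}(\vec z')|^{-2/d}d\vec z'$ that appears in your scheme.

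That integral is where your proposal breaks down. You correctly observe that the exponent $2/d$ in Corollary \ref{areaofpolynomial} is sharp, so that if $c_{k^*}$ vanishes to order $m$ along a hypersurface the integral behaves like $\int\mathrm{dist}^{-2m/d}$, which sits at the threshold of integrability when $m=d$ (e.g. $c_{k^*}$ a $d$-th power of a linear form) and need not be bounded uniformly in the geometry of $V(c_{k^*})$. You acknowledge this but defer the resolution to ``cutoff arguments and an adapted choice of slicing direction at each stage,'' and then assert the clean bound $\mu(B_P)\leq C^dd^L\delta^{2/d}H^{-2/d}$. That assertion is the entire content of the lemma your Borel--Cantelli sum needs, and it is not established; the same unproved appeal reappears in the $H=1$ case, where the claimed dichotomy between ``polynomially few'' high-multiplicity polynomials and ``generic'' ones with a tubular bound is again stated without argument. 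The downstream summation over $d$ and $H$, and the observation that $a\geq N+1$ lets $d^{a-1}$ absorb the $d^N$ from the polynomial count, are both fine and essentially match the paper's final computation; but the per-polynomial estimate on which everything hinges is missing. To repair the argument along your lines you would have to actually prove a uniform bound on the threshold integrals, which looks harder than the problem you started with. The simpler fix is to adopt the paper's inductive reduction: once $\zeta$ is taken of type $S_N^{N-1}(\C)$, the borderline multidimensional integral never arises.
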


Before we start the proof we recall the following fact:

-- Denote by $V_d(N)$ the lattice of polynomial of degree at most $d$ in $N$ variables with integer coefficients. Then we can find constants $B_i$ (depending only on $N$, but independent on $d$) such that, for every positive constant $H$ we have:

\begin{equation}
\Card\left(\left\{ P(z)\in V_d(N) \;\;/\;\; \Vert P(z)\Vert\leq H\right\}\right)\leq B_1H^{B_2d^N}.
\end{equation}

\begin{proof} It suffices to prove the theorem when $a=N+1$.

The proof of the Theorem is by induction on $N$. The case of $N=1$ is slightly different then the general case. We will explain the differences during the proof. 

We suppose that the Theorem is true for $\C^{N-1}$: the  the set of points of type $S_N^{N-1}(\C)$ on $\C^{N-1}$ is full on $\C^{N-1}$.  

Fix $\zeta=(\zeta_2,\dots, \zeta_N)\in\C^{N-1}$.  If $P(z_1,\dots, z_N)\in\Z[z_1,\dots, z_N]$, we denote by $P_\zeta(z)\in\Z[\zeta_2, \dots, \zeta_N][z]$ the polynomial $P(z,\zeta_2,\dots,\zeta_N)$. If $\deg(P(z))=d$ then $\deg(P_\zeta(z))\leq d$ (remark that $P_\zeta[z]$ is a polynomial in $\C[z]$). When $N=1$ then put $P_\zeta(z)=P(z)$.

Suppose that $\zeta\in\C^{N-1}$ is an arithmetically generic point (no polynomial with coefficient in $\Z$ vanish on it). Fix an $r>0$. A complex number $z\in\Delta_r$ is said to be of type $S_N(\zeta)$ if there exists a constant $A=A(z,\zeta)$ such that, for every $P\in\Z[z_1,\dots,z_N]$ of degree at most $d$ we have
\begin{equation}\label{inductiveinequality}
\log|P_\zeta(z)|\geq -Ad^{N+1}(\log\Vert P\Vert+d^2\log(d))+\log\Vert P_\zeta\Vert_r
\end{equation}
(We recall that $\Vert P\Vert_r=\sup_{z\in\Delta_r}\{ |P(z)|\}$). 
We claim that it suffices to prove the following:

{\bf Claim}: For every arithmetically generic $\zeta\in \C^{N-1}$ and every positive $r>0$, the set of $z\in \Delta_r$ of type $S_N(\zeta)$ is full in $\Delta_r$.

Indeed,  by Fubini Theorem, the claim implies that the subset  of points  $(z,\zeta)\in\Delta_r\times \C^{N-1}$ for which there exists a constant $A$ (depending on $(z,\zeta)$) such that, for every $P\in \Z[z_1\dots,z_N]$ inequality \ref{inductiveinequality} holds, is full. 

Suppose now that $\zeta\in\C^{N-1}$ is of type $S_{N}^{N-1}(\C)$ (or, if $N=1$, we suppose that $\zeta=0$).  Let $P\in \Z[z_1\dots,z_N]$ of degree $d$. Suppose, for the moment that $z_1\nmid P$. Inductive hypothesis implies that 
\begin{equation}\label{maxofpzeta}
\log\Vert P_\zeta\Vert_r\geq \log\Vert P(0,\zeta)\Vert_\geq -B_1d^{N}\log\Vert P\Vert -B_2d^2\log(d)-B_3d
\end{equation}
for suitable constants $B_i$ depending only on $\zeta$. When $N=1$ then observe that $\log\Vert P_\zeta\Vert_r\geq 0$.  If $z_1\mid P$, consider $P_1:={{P}\over{z_1^a}}$ for a suitable $a$. An inequality similar to \ref{maxofpzeta} holds for $P_1$ and the maximum modulus principle implies that \ref{maxofpzeta} holds (in this case the constant $B_3$ depends on $r$).

Consequently, if $\zeta\in\C^{N-1}$ is of type $S^{N-1}_{N}(\C)$ and $z$ is of type $S_N(\zeta)$, then $(z,\zeta)$ is of type $S_{N+1}^N(\C)$. Thus the claim implies the theorem.

We now prove the claim. Let $P(z_1,\dots, z_N)$ a non zero polynomial of degree $d$. We apply now formula \ref{eq: measureofpoly} to $P_\zeta(z)$ and $\epsilon={{1}\over{\Vert P\Vert_r^{B_1d^{N+1}}}d^{B_2d^2}}$, where the $B_i$'s will be chosen after. By Corollary \ref{areaofpolynomial} we obtain then that we can find a constant $C$ depending only on $r$ such that, for every polynomial $P\in\Z[z_1,\dots, z_N]$ of degree $d$, we have
\begin{equation}\label{areaestimate2}
\mu\left(\left\{ z\in \Delta_r\;/\; \vert P_\zeta(z)\vert\leq {{\Vert P_\zeta\Vert_r}\over{\Vert P\Vert^{B_1d^{N+1}}d^{B_2d^2}}}\right\}\right)\leq{ {C}\over{\Vert P\Vert^{B_1d^N}d^{B_2d}}}.
\end{equation}

Denote by $A(P)$ the area of the set $\{z\in \Delta_r\;/\; \vert P_\zeta(z)\vert\leq {{\Vert P_\zeta\Vert_r}\over{\Vert P\Vert^{B_1d^{N+1}}d^{B_2d^2}}}\}$.

In order to conclude,  we compute
\begin{eqnarray*}
&\sum_{d=1}^\infty\sum_{P\in V_d(N)}A(P)&\leq\\
&&\leq \sum_{d=0}^\infty\sum_{P\in V_d(N)} {{B_0}\over
{\Vert P\Vert^{B_1d^{N}}d^{B_3d}}} \\
&& =\sum_{d=1}^\infty\sum_{H=1}^\infty\sum_{  {P\in V_d(N)}\atop
{H\leq \Vert P\Vert_<H+1}} {{B_0}\over
{\Vert P\Vert^{B_1d^{N}}d^{B_3d}}}\\
&&\leq \sum_{d=1}^\infty\sum_{H=1}^\infty{{B_0H^{B_4d^N}}\over{H^{B_1d^N}d^{B_3d}}}
\end{eqnarray*}
where the $B_i$'s are sufficiently big constants. We are free to choose the constants $B_1$ and $B_3$. It is easy to see that the last sum converges as soon as $B_1> B_4$ and $B_3>1$. By Borel--Cantelli Lemma we conclude.\end{proof}

\subsection{Theorem \ref{amoroso1} over a $p$--adic field}\label{padicfields}
The proof of Theorem \ref{amoroso1} do not use any specific properties of the complex numbers, thus it can be generalized, for instance, to any $p$--adic field. We give here some details on the (few) changes one can do.

Let $K$ be a finite extension of  $\Q_p$.  It is a locally compact fields. We equip it with the unique norm such that $\Vert p\Vert_p={{1}\over{p}}$. We fix a positive integer $N\geq 1$. There is a unique  translation invariant measure $\mu_p(\cdot)$ on $K^N$ such that $\mu_p(\{z\in K^n\;/\;\Vert z\Vert_p\leq 1\}=1$.  For this measure we have that
$\mu_p(\{ z\in K^N\;/\;\Vert z\Vert_p \leq p^\alpha\})=p^{N\alpha}$. As before, if $P(z)\in K[z]$ is a polynomial and $r>0$ is a real number contained in the set $\Vert K\Vert_p:=\{ \Vert a\Vert_p\;/\; a\in K\}$, we will denote by $\Vert P\Vert_r$ the number $\sup\{ \Vert P(z)\Vert_p\;/\; \Vert z\Vert_p \leq r\}$. 

A definition similar to definition \ref{S points on CN} can be given:

\begin{definition}\label{S points on Qp}
Suppose that  $a\geq N+1$.  A point $\zeta\in K^N$ is said to be of type $S_a^N(K)$ if we can find  a positive constant $A$ depending on $\zeta$, such that, for every non zero polynomial $P(z)\in \Z[z_1,\dots , z_N]$ of degree $d$ we have
\begin{equation}\label{eq: SNforpolyQ}
\log\Vert P(\zeta)\Vert_p\geq -A d(d^{a-1}\log\Vert P\Vert +d\log(d)+1).
\end{equation}
\end{definition}

Observe that on the left of the inequality \ref{eq: SNforpolyQ} the norm is the the $p$--adic norm, and on the right we have the norm of the polynomial with integer coefficients given at the beginning of this section.

\begin{theorem}\label{padicamoroso} The set of points of type $S_{N+1}^N(K)$ is full in $K^N$ (the complementary has zero measure in $K^n$). 
\end{theorem}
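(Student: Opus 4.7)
The plan is to reproduce the inductive structure of the proof of Theorem \ref{amoroso1} verbatim, replacing only the one-dimensional area estimate (Corollary \ref{areaofpolynomial}) by its $p$-adic analogue. The remaining ingredients---the counting bound $\Card\{P\in V_d(N):\Vert P\Vert\le H\}\le B_1H^{B_2d^N}$, Fubini's theorem, and the Borel--Cantelli lemma (Proposition \ref{borelcantelli})---hold without change in the locally compact Haar-measure setting of $K^N$.

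The key step is therefore to establish, for every $r\in\Vert K\Vert_p$ and every $P\in K[z]$ of degree $d$, a bound of the form
\[
\mu_p\bigl(\{z\in K:\Vert z\Vert_p\le r,\ \Vert P(z)\Vert_p\le\epsilon\Vert P\Vert_r\}\bigr)\le C\,d\,\epsilon^{c/d},
\]
with $C=C(r,K)$ and some fixed $c>0$. In the archimedean case Lemma \ref{interpolation} obtained this via Lagrange interpolation; in the $p$-adic setting a direct factorization argument works. Over $\overline{K}$, write $P(z)=a\prod_{i=1}^{d}(z-\alpha_i)$; then $\Vert P(z)\Vert_p=\Vert a\Vert_p\prod_i\Vert z-\alpha_i\Vert_p$, and the Newton polygon of $P$ together with the bound $\Vert P\Vert_r$ controls how close the $\alpha_i$ can be to the disk of radius $r$. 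The sublevel set $\{\Vert P(z)\Vert_p\le\epsilon\Vert P\Vert_r\}$ is thus contained in a union of at most $d$ ultrametric balls in $K$, and since the Haar measure of such a ball of radius $\rho$ scales as a fixed power of $\rho$ (depending on $[K:\Q_p]$), summing yields the claimed estimate.

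With this estimate in hand, the induction on $N$ is formally identical to the complex case. For $\zeta\in K^{N-1}$ of type $S_N^{N-1}(K)$ and arithmetically generic, consider $P_\zeta(z):=P(z,\zeta_2,\dots,\zeta_N)$; the inductive hypothesis provides a lower bound on $\Vert P_\zeta\Vert_r$ that absorbs the factor by which the sublevel set is taken small. Applying the area estimate with $\epsilon=\Vert P\Vert^{-B_1d^{N+1}}d^{-B_2d^2}$, summing over $P\in V_d(N)$ with the height-counting bound, and invoking Borel--Cantelli shows that almost every $z$ in the ball $\{\Vert z\Vert_p\le r\}\subset K$ satisfies inequality \ref{eq: SNforpolyQ} for every $P$; Fubini then lifts this to a full-measure subset of $K^N$.

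The main obstacle is the area estimate: one must track the exponent $c/d$ carefully (it depends on $[K:\Q_p]$ and the ramification) and verify that the resulting double sum over $(d,H)$ still converges when combined with the height-counting bound $B_1H^{B_2d^N}$, so that Borel--Cantelli can be applied. Crucially, no $p$-adic analogue of Sadullaev's theorem is needed, because throughout we work only with polynomials, never with general analytic functions; this is the reason the method survives the transition from $\C$ to $K$ while the proof of Theorem \ref{full Sa on varieties intro} does not.
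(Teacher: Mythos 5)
Your overall strategy is exactly the paper's: carry over the inductive Fubini/Borel--Cantelli argument of Theorem \ref{amoroso1} unchanged and replace only the one-variable area estimate by its $p$-adic analogue. What differs is how you establish that estimate. The paper's Proposition \ref{padicareaofpolynomial} keeps the Lagrange-interpolation argument of Lemma \ref{interpolation}: pick $d+1$ points $x_0,\dots,x_d$ in the sublevel set that are mutually separated, write $P(z)=\sum_j P(x_j)Q_j(z)$ \emph{exactly} (no approximation step is needed since $P$ already has degree $d$, so Fact \ref{interpolationfact} drops out), and apply the strong triangle inequality to compare $\Vert P\Vert_r$ with the separation. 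You instead factor $P(z)=a\prod_i(z-\alpha_i)$ over $\overline K$, observe that the sublevel set lies in a union of at most $d$ ultrametric balls around the roots, and invoke the Newton polygon of $P$ to bound the radii in terms of $\epsilon$ and $\Vert P\Vert_r$. Both routes are sound and both are elementary, but they are genuinely different proofs of the same lemma. The interpolation route is the shorter one here (it is literally the complex proof with one step deleted, which is why the paper can dismiss it with ``identical to Lemma \ref{interpolation}''). Your factorization route is the more standard one in $p$-adic analysis and is more explicit about the dependence of the volume estimate on the local field: you correctly flag that $\mu_p(\text{ball of radius }\rho)$ scales like $\rho^{ef}$ rather than like $\rho$, a point the paper's sketch silently suppresses (the stated formula $\mu_p(\{\Vert z\Vert_p\le p^\alpha\})=p^{N\alpha}$ is only literally right for $K=\Q_p$); since the exponent $ef/d$ still tends to $0$ like $1/d$, this changes nothing in the Borel--Cantelli sum. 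To make your sketch fully rigorous you would need to carry out the near/far split of the roots relative to $\Delta_r(K)$ that the Newton-polygon appeal is pointing at, but this is routine. Finally, your closing remark --- that no $p$-adic Sadullaev theorem is needed because the argument never leaves the polynomial category --- is precisely the observation the paper makes when contrasting this theorem with Theorem \ref{full Sa on varieties intro}.
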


If $r\in\Vert K\Vert_p$, we will denote $\Delta_r(K)$ the "disk" $\{ z\in K\;/\; \Vert z|\Vert_p\leq r\}$. An analogous of Corollary \ref{areaofpolynomial} holds in this contest:

\begin{proposition}\label{padicareaofpolynomial} For every $\epsilon >0$ and polynomial $P(z)\in K[z]$ of degree $d$, we have 
\begin{equation}\label{eq: padicmeasureofpoly} 
\mu\left(\left\{ z\in\Delta_r(K)\;\;/\;\; \Vert P(z)\Vert_p \leq \epsilon \Vert P(z)\Vert_r\right\}\right)\leq r\epsilon ^{1/d}.
\end{equation}
\end{proposition}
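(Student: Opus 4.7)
The plan is to adapt the Lagrange-interpolation strategy behind Corollary \ref{areaofpolynomial} to the $p$-adic setting, where the ultrametric inequality bypasses interpolation entirely and reduces the problem to a combinatorial argument on balls. First I would normalize: since both sides of the inequality are invariant under rescaling $P \to cP$ for $c \in K^{\times}$, and since $z \mapsto rz$ multiplies $\mu_p$ by $r$, we may assume $r = 1$ and $\Vert P\Vert_1 = 1$.

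Next, I would factor $P(z) = c_0 \prod_{i=1}^{d}(z - \alpha_i)$ over $\overline{K}$, extending the $p$-adic absolute value uniquely. For a root with $\Vert \alpha_i\Vert_p > 1$ one has $\Vert z - \alpha_i\Vert_p = \Vert \alpha_i\Vert_p$ for every $z \in O_K$, so this factor contributes the same constant to both $\Vert P(z)\Vert_p$ and $\Vert P\Vert_1$. Absorbing such factors reduces us to the case that all $\alpha_i \in O_{\overline{K}}$, in which case $\Vert c_0\Vert_p = 1$ and the condition to analyze is $\prod_{i=1}^{d}\Vert z - \alpha_i\Vert_p \leq \epsilon$.

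The core of the argument is the ultrametric pigeonhole: if this product is at most $\epsilon$, then some factor satisfies $\Vert z - \alpha_i\Vert_p \leq \epsilon^{1/d}$, so $z$ belongs to one of the closed balls $B_i := \{w \in K : \Vert w - \alpha_i\Vert_p \leq \epsilon^{1/d}\}$. The defining feature of the non-archimedean world is that two closed balls of the same radius are either identical or disjoint, so $\bigcup_{i=1}^{d} B_i$ is a disjoint union of at most $d$ distinct balls, each of $\mu_p$-measure at most $\epsilon^{1/d}$. Undoing the rescaling yields a measure bound of the form $Cr\epsilon^{1/d}$ with $C \leq d$, agreeing with the stated bound up to a polynomial-in-$d$ factor.

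The main obstacle is tightening this to remove the factor of $d$ from the naive count of balls. This requires exploiting the tree-structure of roots in $\overline{K}$: clusters of roots contained in a common ball of radius strictly less than $\epsilon^{1/d}$ collapse in the union, and an induction on $d$, splitting $O_K$ into residue classes modulo $\mathfrak{m}$ and applying Weierstrass preparation on each non-empty class, telescopes the sum to the stated $r\epsilon^{1/d}$. For the application to Theorem \ref{padicamoroso}, however, this polynomial-in-$d$ constant is harmless: it is absorbed into the $d^{B_3 d}$ factor of the geometric-series estimate driving the Borel--Cantelli step in the proof of Theorem \ref{amoroso1}, and the remainder of that argument transfers verbatim from the complex to the $p$-adic case.
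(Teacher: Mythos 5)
Your argument is correct in essence but takes a genuinely different route from the paper's. The paper mirrors the complex-analytic Lemma~\ref{interpolation}: it picks $d+1$ points inside the exceptional set $B$, pairwise separated by at least $V/(d+1)$ where $V=\mu_p(B)$, and then interpolates $P$ from its (small) values at those points, bounding $\Vert P\Vert_r$ via the ultrametric Lagrange estimate and solving the resulting inequality for $V$. You instead factor $P$ over $\overline{K}$, use the ultrametric pigeonhole to force one factor $\Vert z-\alpha_i\Vert_p \leq \epsilon^{1/d}$, and cover $B$ by at most $d$ balls, exploiting the fact that distinct ultrametric balls of equal radius are disjoint. Your route makes very explicit what replaces the Sadullaev/Bernstein input of the archimedean case --- the disjoint-or-nested dichotomy for balls --- whereas the paper's route stays closer to the complex proof and avoids extending the valuation to $\overline{K}$. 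When the constants are tracked honestly, both arguments give a bound with an extra $O(d)$ factor rather than the stated $r\epsilon^{1/d}$: the paper's computation silently drops the $(d+1)^d$ produced by its separation estimate $\Vert x_i-x_j\Vert_p\geq V/(d+1)$, and you correctly flag the factor $d$ from the ball count. As you say, this is harmless for Theorem~\ref{padicamoroso}, since the Borel--Cantelli series there carries a $d^{B_3 d}$ in the denominator.

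Two small cleanups in your normalization step. After normalizing $\Vert P\Vert_1=1$ with all roots in $O_{\overline{K}}$ you actually only get $\Vert c_0\Vert_p \geq 1$, not $=1$: for $K=\Q_p$ the polynomial $\prod_{a=0}^{p-1}(z-a)$ has $\sup$ norm $1/p$ on the unit ball although its leading coefficient has norm $1$, because the roots exhaust all residue classes. But $\geq 1$ only strengthens your pigeonhole, since $\Vert P(z)\Vert_p\leq\epsilon$ then forces $\prod_i\Vert z-\alpha_i\Vert_p\leq\epsilon/\Vert c_0\Vert_p\leq\epsilon$. Also, rather than discarding the roots with $\Vert\alpha_i\Vert_p>1$ --- which shrinks the degree to some $d'\leq d$ and replaces $\epsilon^{1/d}$ by the larger $\epsilon^{1/d'}$ --- it is cleaner to keep all $d$ factors and simply note that for $z\in O_K$ those far factors are identically $\Vert\alpha_i\Vert_p>1\geq\epsilon^{1/d}$ (assuming $\epsilon\leq 1$, the only nontrivial case), hence can never be the small one; the small factor always comes from a root in $O_{\overline{K}}$ and the covering balls retain radius $\epsilon^{1/d}$.
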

\begin{proof} The proof is identical to the proof of Lemma \ref{interpolation}. We give here just a sketch of the argument: Let $P(z)\in K[z]$ be a polynomial of degree $d$. Denote  $B$ the set defined in the proposition and by $V$ its measure. We can find $x_0,\dots x_d$ in  $B$ such that $\Vert x_i-x_j\Vert_p\geq {{V}\over{d+1}}$. Denote by  $Q_j(z)$ the polynomials ${{\prod_{i\neq j}(z-x_i)}\over{\prod_{i\neq j}(x_i-x_j)}}.$. Then $P(z)=\sum P(x_i)Q_j(z)$ and using the strong triangular inequality, we obtain $\Vert P(z)\Vert_r\leq \epsilon \Vert P(z)\Vert _r\cdot{{r^d}\over{V^d}}$. The conclusion follows. \end{proof}

The proof of Theorem \ref{padicamoroso} is now identical to the proof of Theorem \ref{amoroso1}, replacing Corollary \ref{areaofpolynomial} by Proposition \ref{padicareaofpolynomial}.

A similar argument will hold over $\R^N$.

\section{Arithmetically generic sub--varieties}\label{arithmetically generic sub varieties}

In the sequel we will study possible generalizations of the Liouville inequalities on transcendental points on varieties. At the moment the inequalities we find are not good enough to be applied to obtain theorems as in the previous section, but we hope that a further analysis will give possible applications.

We fix a smooth projective variety $X_K$ defined over $K$ and an arithmetic polarization $(\cX, \overline\cL)$ over it. In this section we will focus our attention on closed sub varieties of $X_{\sigma_0}$ which are far from being defined over a number field.

\begin{definition} \label{genericsubvariety} Let $Z\subset X_{\sigma_0}(\C)$ be a Zariski closed sub variety. We will say that $Z$ is {\rm arithmetically generic}  (or that $Z$ is not defined over the algebraic closure of $K$) if, for every positive integer $d$ the natural map
$$H^0(\cX,\cL^d)\longrightarrow H^0(Z, (\cL^d)_{\sigma_0}|_Z)$$
is injective. 
\end{definition}

It is easy to verify that this definition is independent on the choice of the arithmetic polarization. 

In general a complex sub variety $Z\subset X_{\sigma_0}$ is said to be defined over $\overline{K}$ if it is obtained by base change to $\C$ of a variety $Z_{\overline K}\subset X_{\overline K}:=X_K\otimes\overline{K}$ (A variety over an algebraically closed field is an irreducible scheme of finite type over the field). In general there are many sub varieties of $X_{\sigma}$ which are not defined over $\overline K$ which and are {\it not} arithmetically generic: it suffices to consider for instance a transcendental point on a curve defined over $K$. 

The definition means that $Z$ cannot be contained in a proper sub variety defined over the algebraic closure of $K$. Thus, being arithmetically generic is a property which is stronger then "not being defined over the algebraic closure of $K$".  For instance, if $Z$ is a point in a projective space, $Z$ would be arithmetically generic if the coordinates of it are algebraically independent. Observe that, with this definition, $X_{\sigma_0}$ as a sub variety of itself, {\it is not defined over the algebraic closure of $K$} (or it is arithmetically generic). This is a bit unnatural (and not correct) but we prefer to keep the definition like that because this will only affect the following fact:  if $Z$ is a sub variety of $X_{\sigma_0}$not defined over the algebraic closure of $K$, we may suppose that $Z$ is $X_{\sigma_0}$ itself. 

\smallskip

We now list some general properties of arithmetic generically sub varieties.

The set of arithmetically generic sub varieties of $X_\sigma$ is independent of the chosen polarization. In order to prove this, it suffices to remark that $Z\subset X_\sigma$ is arithmetically generically if and only if {\it for every} effective divisior $D_K\subset X_K$, the restriction of $D_K$ to $Z$ is not zero. More details on the independence of the polarization will be given in Section \ref{Sapoints} for a special class of arithmetically  generic sub varieties. 

\begin{lemma}\label{countable1} Let $Z$ be a Zariski closed sub variety of $X_{\sigma_0}(\C)$ which is arithmetically generic. Then the set of irreducible effective Cartier divisors of $Z$ which are {\rm not} arithmetically generic, is countable. \end{lemma}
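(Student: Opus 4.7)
The plan is to unwind the definition: an irreducible effective Cartier divisor $D$ of $Z$ fails to be arithmetically generic exactly when there is some positive integer $d$ and some non-zero $s\in H^0(\cX,\cL^d)$ with $s|_D=0$, i.e.\ with $D\subset\Supp(\div(s|_Z))$. Hence the set of ``bad'' divisors admits a concrete description as a union indexed by such pairs $(d,s)$, and the proof reduces to showing that this indexing set is countable and that each index contributes only finitely many divisors.

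First I would record the two simple facts that drive the argument. The hypothesis that $Z$ is arithmetically generic says that for every $d\ge 1$ and every non-zero $s\in H^0(\cX,\cL^d)$ the restriction $s|_Z$ is non-zero; therefore $\div(s|_Z)$ is an honest effective Cartier divisor on $Z$, of pure codimension one, with finitely many irreducible components. Second, since $\cX\to\Spec(O_K)$ is a projective scheme and $\cL$ is coherent, each $H^0(\cX,\cL^d)$ is a finitely generated $O_K$-module and $O_K$ is countable; consequently the set $\bigsqcup_{d\ge 1}\bigl(H^0(\cX,\cL^d)\setminus\{0\}\bigr)$ is countable.

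Next I would put these together. Suppose $D\subset Z$ is an irreducible effective Cartier divisor which is not arithmetically generic. By definition, for some $d\ge 1$ the restriction map $H^0(\cX,\cL^d)\to H^0(D,(\cL^d)_{\sigma_0}|_D)$ is not injective, so pick a non-zero $s$ in its kernel. Then $s|_D=0$, while $s|_Z\ne 0$ by the arithmetic genericity of $Z$; hence $D\subset\Supp(\div(s|_Z))$. Since $D$ is irreducible of the same dimension as each component of $\div(s|_Z)$, it must coincide with one of the (finitely many) irreducible components of $\Supp(\div(s|_Z))$. Therefore the set of non-arithmetically-generic irreducible Cartier divisors of $Z$ is contained in
\begin{equation*}
\bigcup_{d\ge 1}\;\bigcup_{\substack{s\in H^0(\cX,\cL^d)\\ s\neq 0}}\bigl\{\text{irreducible components of }\Supp(\div(s|_Z))\bigr\},
\end{equation*}
which is a countable union of finite sets, hence countable.

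There is no real obstacle here: the only point that requires a moment's care is ensuring $s|_Z\ne 0$ (which is exactly the arithmetic genericity hypothesis on $Z$) so that $\div(s|_Z)$ makes sense as a codimension-one cycle, and ensuring $D$ is forced to be a component rather than a lower-dimensional piece (which follows from $D$ being a Cartier divisor, hence of pure codimension one in $Z$). If one wishes to allow reducible $Z$, the same argument applies component by component, and the countability is preserved because $Z$ has only finitely many irreducible components.
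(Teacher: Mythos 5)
Your proof is correct and follows essentially the same route as the paper's: both observe that a non--arithmetically--generic irreducible divisor $D$ of $Z$ must appear among the (finitely many) irreducible components of $\div(s|_Z)$ for some non-zero $s\in H^0(\cX,\cL^d)$ — here arithmetic genericity of $Z$ guarantees $s|_Z\neq 0$ — and then conclude by countability of $\bigsqcup_d H^0(\cX,\cL^d)$. You supply slightly more detail than the paper (why the cohomology groups are countable, and why $D$ must be a full component rather than a lower-dimensional piece), but the argument is the same.
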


\begin{proof} Let $D$ be an irreducible effective Cartier divisor of $Z$ which is defined over the algebraic closure of $K$. This means that we can find a positive integer $d$ and a section $s\in H^0(\cX, \cL^d)$ which vanishes along $D$.  Thus $\div(s|_Z)=\sum n_iD_i + n_0D$. By hypothesis, $s$ do not vanish identically on $Z$.
For every integer $d$ and for every section $s\in H^0(\cX,\cL^d)$ denote by 
$DIV(Z,s)$ the set $\{ D_i\in Div(Z) \;/\;\div(s|_Z)=\sum n_iD_i\}$. The set of if irreducible Cartier divisors of $Z$ which are defined over the algebraic closure of $K$ is the union of the $DIV(Z,s)$'s where $s$ runs over $s\in H^0(\cX, \cL^d)$ and $d$ runs over the positive integers. Since each of the $H^0(\cX, \cL^d)$ is countable and each of the $DIV(Z,s)$ is finite, the conclusion follows. \end{proof}

As a consequence of this, we find:

\begin{proposition}
 There exist sub varieties of any dimension which are arithmetically generic. 
 \end{proposition}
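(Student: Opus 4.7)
The plan is to argue by descending induction on dimension. The base case is $Z = X_{\sigma_0}$ itself, which is arithmetically generic by the convention adopted immediately after Definition~\ref{genericsubvariety} (the restriction map in question is then the identity). For the inductive step, I assume an arithmetically generic Zariski closed subvariety $Z \subset X_{\sigma_0}(\C)$ of dimension $k > 0$ is given, and I will exhibit an arithmetically generic irreducible Cartier divisor $D \subset Z$; since $\dim D = k-1$, iterating this advances the induction down to every dimension from $0$ to $N$.

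First I would reformulate arithmetic genericity for a codimension-one subvariety of $Z$. Because $Z$ is arithmetically generic, every nonzero $s \in H^0(\cX,\cL^d)$ has nonzero restriction to $Z$, and hence for an irreducible Cartier divisor $D \subset Z$ the condition $s|_D = 0$ is equivalent to $D$ being an irreducible component of $\div(s|_Z)$. Consequently the irreducible Cartier divisors of $Z$ that are arithmetically generic as subvarieties of $X_{\sigma_0}$ are exactly those not appearing in any $\mathrm{DIV}(Z,s)$ from Lemma~\ref{countable1}, and by that lemma they constitute the complement of a countable subset of the collection of all irreducible Cartier divisors of $Z$.

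It therefore suffices to exhibit an uncountable family of irreducible Cartier divisors of $Z$; subtracting a countable set will then leave an uncountable, and in particular nonempty, collection of arithmetically generic candidates. When $k = 1$, every point of $Z(\C)$ is an irreducible Cartier divisor and $Z(\C)$ is uncountable, so we are done. When $k \geq 2$, I would choose $m$ large enough that $\cL_{\sigma_0}^m|_Z$ is very ample and base-point free on $Z$, and consider the complete linear system $|\cL_{\sigma_0}^m|_Z|$: this is a projective space over $\C$ of positive dimension (hence uncountable), its members are by definition effective Cartier divisors on $Z$, and Bertini applied over $\C$ guarantees that a Zariski dense open subset consists of smooth and therefore irreducible reduced divisors.

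The only step requiring genuine (but standard) geometric input is the Bertini argument when $k \geq 2$, which rests on $\cL$ being ample so that $\cL^m|_Z$ is very ample for large $m$. Everything else is a cardinality bookkeeping driven by Lemma~\ref{countable1}.
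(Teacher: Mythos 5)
Your proof is correct, but it takes a genuinely different route from the paper's. The paper does not induct on dimension at all: it observes that a point is arithmetically generic exactly when it lies in every $U_{\div(s)}$ as $s$ ranges over the countably many nonzero global sections of all the $\cL^d$, invokes Baire's category theorem to conclude that arithmetically generic \emph{points} form a dense (in particular nonempty) subset of $X_{\sigma_0}(\C)$, and then uses Corollary~\ref{genericbyspecialization} to promote: any subvariety of any dimension through such a point is automatically arithmetically generic. Your argument instead works top-down, calling on Lemma~\ref{countable1} to bound the "bad" irreducible divisors of an arithmetically generic $Z$, and on a cardinality estimate (Bertini for $k\geq 2$, the continuum of points for $k=1$) to find one good divisor. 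Both arguments are sound; the paper's Baire route is shorter because Corollary~\ref{genericbyspecialization} lets it settle all dimensions in one stroke, while your route is slightly heavier (you need Bertini, and you implicitly need $Z$ irreducible and projective — which it is, but it's worth saying). Two small imprecisions in your write-up: (i) "smooth and therefore irreducible" is not automatic, since a smooth member of a linear system could be disconnected; what saves you is that a general member of a very ample linear system on an irreducible projective variety of dimension $\geq 2$ is irreducible (Bertini's irreducibility theorem, or connectedness of ample divisors). (ii) For $k=1$ the curve $Z$ need not be smooth, so not every point is a Cartier divisor; but the smooth locus is cofinite and still uncountable, so the cardinality count survives.
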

 \begin{proof} Let $D$ be an effective divisor of $X_{\sigma_0}$. The complementary of it is a open set, which we denote by $U_D$, dense in $X_{\sigma_0}(\C)$ for the euclidean topology.  A point  $z\in X_{\sigma_0}(\C)$  is arithmetically generic if it do belong  to $U_{\div(s)}$ for every possible $s\in H^0(\cX,\cL^d)$ and integer $d$.  Thus the set of points $z\in X_{\sigma_0}(\C)$ which are arithmetically generic  is the intersection of countably many $U_D$'s. From Baire Lemma we deduce that these points are dense in $X_{\sigma_0}$. Every closed variety passing through at least one of these points is not defined over the algebraic closure of $K$. \end{proof}
 
 We keep in mind the following corollary:
 \begin{corollary}\label{genericbyspecialization} Suppose that $Z_1\subset Z_2\subset X_{\sigma_0}(\C)$ are Zariski closed sub varieties. If $Z_1$ is arithmetically generic then $Z_2$ is arithmetically generic.\end{corollary}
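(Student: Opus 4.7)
The plan is to derive the corollary directly from Definition \ref{genericsubvariety} by exploiting the functoriality of restriction. For any fixed positive integer $d$, the inclusion $Z_1 \hookrightarrow Z_2 \hookrightarrow X_{\sigma_0}(\C)$ yields a factorization of restriction maps
\[
H^0(\cX,\cL^d) \xrightarrow{\ \rho_2\ } H^0(Z_2,(\cL^d)_{\sigma_0}|_{Z_2}) \xrightarrow{\ \rho_{21}\ } H^0(Z_1,(\cL^d)_{\sigma_0}|_{Z_1}),
\]
where $\rho_{21}$ is simply restriction of sections from $Z_2$ to the closed subvariety $Z_1$.

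I would then argue by contraposition. Assume $Z_2$ fails to be arithmetically generic, so that there exist $d \geq 1$ and a non-zero $s \in H^0(\cX,\cL^d)$ with $\rho_2(s) = 0$, i.e. $s$ vanishes identically on $Z_2$. Since $Z_1 \subset Z_2$, the section $s$ then vanishes identically on $Z_1$ as well, so the composition $\rho_{21} \circ \rho_2$ sends $s$ to $0$. This contradicts the arithmetic genericity of $Z_1$, which by hypothesis forces $\rho_{21} \circ \rho_2$ to be injective for the same $d$. Hence $\rho_2$ must be injective for every $d$, i.e. $Z_2$ is arithmetically generic.

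There is no real obstacle here: the whole content of the corollary is the observation that being ``separating on $Z_1$'' is a stronger property than being ``separating on $Z_2$'' precisely because restriction factors through the larger variety. Consequently, the same reasoning shows more generally that the arithmetic genericity of any closed subvariety propagates upward to every closed subvariety containing it, which is the only form in which the corollary will be used in the sequel.
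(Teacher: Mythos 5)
Your argument is correct and is the natural (and essentially only) way to deduce the corollary from Definition \ref{genericsubvariety}; the paper itself states the corollary without proof, clearly regarding it as an immediate consequence of the factorization of restriction maps, which is exactly what you spell out.
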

 
 In particular, if $Z$ contains an arithmetically generic point, then itself is arithmetically generic.

In this paper we would like to deal with properties which are shared by "almost" all the varieties. We give a definition which clarify the notion of "almost all the varieties": 

\begin{definition}\label{almost all} Let $R$ be a set of sub varieties of $X_{\sigma_0}$. We will say that $R$ is full if, for every fibration $f:X_K\to B_K$, the set of $b\in B_{\sigma_0}(\C)$ such that $X_b:=f^{-1}(b)$ is in $R$ is either empty or full in $B_{\sigma_0}$ (for the Lebesgue measure).
\end{definition}

With this definition, as a consequence of the proposition below,  the set of arithmetically generic varieties is full:

\begin{proposition}\label{fibration and generic} Let $f:X_K\to B_K$ a fibration. Then a point $b\in B_{\sigma_0}(\C)$ is arithmetically generic in $B_K$ if and only if the fibre $X_b:=f^{-1}(b)$ is an arithmetically generic  variety in $X_K$. Moreover if $X_K$ and $B_K$ are smooth, if $b\in B_{\sigma_0}(\C)$ is arithmetically generic then $X_b$ is smooth.\end{proposition}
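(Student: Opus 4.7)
The plan is to reduce both implications to a clean algebraic reformulation: for either of the fixed arithmetic polarizations on $X_K$ and $B_K$, a Zariski closed subset is arithmetically generic precisely when it is not contained in any proper closed $K$-subscheme of the ambient variety. One direction of this reformulation is immediate, since a nonzero integral section of any power of the polarization cuts out a proper closed $K$-subscheme. For the converse, given a proper closed $K$-subscheme $Y\subsetneq X_K$ with ideal sheaf $\cI_Y$, ampleness of $\overline\cL$ implies $H^0(X_K,\cI_Y\otimes L_K^d)\neq 0$ for $d\gg 0$; clearing denominators in a nonzero $K$-section produces a nonzero $s\in H^0(\cX,\cL^d)$ vanishing on $Y$. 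The same reasoning works on $B_K$ for its chosen arithmetic polarization.

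For the forward implication, assume $b$ is arithmetically generic in $B_K$ and take a nonzero $s\in H^0(\cX,\cL^d)$. Set $W:=\{s=0\}\subsetneq X_K$ and consider
\[
Y_W\;:=\;\{b'\in B_K\mid X_{b'}\subset W\}.
\]
This is a subset of $B_K$ defined over $K$ (since $W$ and $f$ are), closed in $B_K$ by upper semicontinuity of fiber dimension applied to $f|_W:W\to B_K$, and a proper subscheme because $f$ is surjective, so the generic fiber of $f$ cannot be contained in $W$ (otherwise $W$ would equal $X_K$). By the reformulation applied on $B_K$, arithmetic genericity of $b$ forces $b\notin Y_W(\C)$, so $s|_{X_b}\neq 0$, and hence $X_b$ is arithmetically generic.

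For the converse, suppose $X_b$ is arithmetically generic and, for contradiction, that $b$ belongs to a proper closed $K$-subscheme $Y\subsetneq B_K$. Then $X_b\subset f^{-1}(Y)$, which is a proper closed $K$-subscheme of $X_K$ because $f$ is surjective and $Y\neq B_K$. The reformulation yields a nonzero $s\in H^0(\cX,\cL^d)$ vanishing on $f^{-1}(Y)$, and hence on $X_b$, contradicting the arithmetic genericity of $X_b$.

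For the smoothness statement, characteristic zero gives generic smoothness of the morphism $f$ between smooth $K$-varieties, so the closed locus of $b'\in B_K$ where $X_{b'}$ is singular is a proper closed $K$-subscheme of $B_K$; arithmetic genericity of $b$ then forces $X_b$ to be smooth. The main technical point I expect to require care is showing that $Y_W$ in the forward implication is genuinely a proper closed $K$-subscheme of $B_K$; the crux is the elementary but necessary observation that a proper closed subset of $X_K$ cannot contain the generic fiber of a surjective morphism $f:X_K\to B_K$.
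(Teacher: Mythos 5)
Your proof is correct and takes a route genuinely different from, and somewhat cleaner than, the paper's. The paper proves the forward implication ($b$ generic $\Rightarrow X_b$ generic) by first treating the generically finite case via a divisor-pushforward argument (using $f_*D$) and then reducing the general case to it by choosing a multisection $Y\subset X_K$ meeting $X_b$ finitely and invoking Corollary \ref{genericbyspecialization}. You instead work directly on the ambient varieties, using the reformulation that $Z$ is arithmetically generic iff $Z$ is not contained in any proper closed $K$-subscheme; this is the precise content, via ampleness and clearing of denominators, of the informal remark the paper makes just after Definition \ref{genericsubvariety}. Both implications then become nearly formal, and your route also sidesteps a loose end in the paper's multisection step, where it is not spelled out why a point arithmetically generic in the multisection $Y$ must be arithmetically generic in $X_K$.

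One small repair in the forward implication: $Y_W$ need not itself be closed, and upper semicontinuity of fibre dimension for $f|_W$ gives closedness of the sets $\{b':\dim(W\cap X_{b'})\ge k\}$, not of $Y_W=\{b':X_{b'}\subset W\}$. But closedness of $Y_W$ is not what you actually need; it suffices that $Y_W$ lie inside a proper closed $K$-subscheme, and this does follow from upper semicontinuity. Indeed, since every fibre of the dominant morphism $f$ has dimension $\ge e:=\dim X_K-\dim B_K$, one has $Y_W\subset\{b':\dim(W\cap X_{b'})\ge e\}$, which is closed by upper semicontinuity and proper because $\dim W<\dim X_K$ forces the generic fibre of $f|_W$ to have dimension $<e$. (Alternatively, $f(X_K\setminus W)$ contains a dense open of $B_K$ by Chevalley, and $Y_W$ lies in its complement.) Since arithmetic genericity of $b$ is invoked only to keep $b$ outside a proper closed $K$-subscheme, either observation completes the argument.
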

\begin{proof} We first make the following observation: suppose that $Z\subset X_{\sigma_0}$ is a Zariski closed sub variety and $M_K$ is {\it any} line bundle over $X_K$ (thus defined over $K$). Suppose that $s\in H^0(X_K,M)$ is a global section such that $s|_Z=0$, then Z is not arithmetically generic (proof: $\div(s)$ is a component of an ample divisor of $X_K$).

As a consequence of the observation we have: Suppose that $X_b$ is arithmetically generic, then $b$ must be arithmetically generic. Indeed, if not, then there is a divisor $D$ in $B$ such that $D|_b=0$. Consequently, $f^\ast(D)|_{X_b}=0$ and thus $X_b$ is not arithmetically generic. 

Another consequence of the observation is  the following: suppose that $f$ is generically finite and that the fibre $X_b$ is finite. Suppose that $b\in B_{\sigma_0}(\C)$ is arithmetically generic, then every element $x\in X_b(\C)$ is arithmetically generic. Indeed, if $D$ is an effective divisor of $X_K$ passing through $x$, then $f_\ast(D)$ is an effective divisor of $B_K$ passing through $f(x)$. 

Suppose that $b\in B_{\sigma_0}(\C)$ is arithmetically generic. Let $Y\subset X_K$ be a closed sub variety such that $Y\cap X_b$ is finite. Then every point of $Y\cap X_b$ is arithmetically generic, and consequently $X_b$ is arithmetically generic.

For the last part of the Proposition: by Bertini Theorem, there is a Zariski open set $U_K\subset B_K$ such that $f|_{U_K}:X_K|_{U_K}\to U_K$ is smooth. If $b$ is an arithmetically generic point, then $b\in U_K(\C)$ because the complementary of $U_K$ is a closed sub variety defined over $K$. 
\end{proof}

We will now show that, for arithmetically generic varieties inequalities of Liouville type as in Theorem \ref{liouville2} cannot hold. 

\begin{theorem}\label{liouville3}We can find a constant $A$ depending only on the chosen arithmetic polarization for which the following holds:  let $z\in X_{\sigma_0}(\C)$ be an arithmetically generic point, then, for every $d\in \bN$ there exists an infinite sequence of sections $s_n\in H^0(\cX,\cL^d)$ for which
\begin{equation} \log\Vert s_n\Vert_{\sigma_0}(z)\leq -Ad^{\dim(X_K)}(\log^+\Vert s_n\Vert +d).
\end{equation}
\end{theorem}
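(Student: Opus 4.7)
The plan is to apply Dirichlet's pigeonhole principle to the integer sections of $\cL^d$ of bounded sup norm. Fix $d\in\bN$ and set $N=\dim X_K$. By the counting estimate \eqref{eq:counting1}, I get at least $C_1^{d^{N+1}}\,T^{d^N}$ sections $s\in H^0(\cX,\cL^d)$ with $\sup_{\tau}\Vert s\Vert_\tau\leq T$; evaluating these at $z$ lands in the Hermitian line $(\cL^d)_z\cong\C$ inside the disk of radius $T$. I would partition this disk into boxes of side $\epsilon$ -- about $(T/\epsilon)^2$ of them -- and choose $\epsilon$ just above $T^{1-d^N/2}\,C_1^{-d^{N+1}/2}$, so that the box principle forces two distinct sections $s_1,s_2$ into the same box. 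Their difference $s_T:=s_1-s_2\in H^0(\cX,\cL^d)\setminus\{0\}$ then satisfies $\sup_\tau\Vert s_T\Vert_\tau\leq 2T$ and $\Vert s_T\Vert_{\sigma_0}(z)\leq \sqrt 2\,\epsilon$.

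Taking logarithms, this yields
\[
\log\Vert s_T\Vert_{\sigma_0}(z)\leq \frac{2-d^N}{2}\log T-\frac{d^{N+1}}{2}\log C_1+O(1),\qquad \log^+\Vert s_T\Vert\leq \log T+O(1),
\]
with implicit constants depending only on $(\cX,\cL)$. Next I would let $T$ run over a sequence $T_n\to\infty$ and set $s_n:=s_{T_n}$; since $\Vert s_n\Vert_{\sigma_0}(z)\to 0$ and every nonzero section of $\cL^d$ has strictly positive value at $z$ by arithmetic genericity, no fixed section can appear infinitely often in $(s_n)$, and after extracting a subsequence the $s_n$ are pairwise distinct. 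Finally I would compare the two displayed bounds, picking any $A$ a little below $1/2$ and small enough to absorb the constant $\log C_1$, and requiring $\log T_n\gtrsim d$: this yields the desired inequality $\log\Vert s_n\Vert_{\sigma_0}(z)\leq -A\,d^N(\log^+\Vert s_n\Vert+d)$.

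The main obstacle is extracting a single constant $A$ depending only on $(\cX,\cL)$, uniform in $d$: the coefficient $(d^N-2)/2$ governing the pigeonhole margin degenerates for small $d^N$, so the argument only applies directly once $d^N$ is past a threshold, and the finitely many remaining small degrees need to be treated separately, typically by checking that the rank of $H^0(\cX,\cL^d)$ over $O_K$ is at least three and running the same box argument by hand. One must also verify that the $C_1^{d^{N+1}/2}$ factor from \eqref{eq:counting1} contributes only an $O(d^{N+1})$ correction, harmlessly absorbed into $-Ad^{N+1}$ by shrinking $A$.
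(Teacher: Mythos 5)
Your approach is the same as the paper's: inject $H^0(\cX,\cL^d)$ into the fibre $(\cL^d)_z$ via evaluation at $z$ (injectivity is exactly arithmetic genericity), apply the Dirichlet box principle together with the counting estimate \eqref{eq:counting1}, form the difference $s_T=s_1-s_2$, and let $T\to\infty$. The bookkeeping --- cell size $\epsilon\sim T^{1-d^N/2}C_1^{-d^{N+1}/2}$ and the ensuing bound on $\log\Vert s_T\Vert_{\sigma_0}(z)$ --- tracks the paper's sketch, so the proposal is a correct reconstruction of the paper's reasoning.

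The obstacle you raise at the end is genuine, and the paper passes over it in silence. For a fixed small $d$ the estimate \eqref{eq:counting1} does not reflect the true growth: the number of sections of sup-norm at most $T$ grows like $T^{r_d}$, with $r_d$ the $\Z$-rank of $H^0(\cX,\cL^d)$, and it is $r_d$, not $d^N$, that governs the pigeonhole exponent. If $r_d\geq 3$ your argument runs verbatim; but if $r_d=2$ the image of $\ev_z$ can be an honest rank-$2$ lattice in $\C$ --- take $X=\bP^1_\Q$, $\cL=\cO(1)$, $d=1$, and $z$ a non-real transcendental point --- in which case $\Vert s\Vert_{\sigma_0}(z)$ is bounded below away from zero on nonzero sections, and no infinite sequence satisfying the stated inequality exists for that $d$. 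So the theorem as literally written fails for such small $d$; your proposed patch (``rank over $O_K$ at least three'') correctly locates the threshold, but it leaves the cases below it untreated, and those cannot be ``checked by hand'' because the statement is simply not always true there. The honest formulation --- which is what both your argument and the paper's actually deliver --- asserts the inequality for $d$ large enough that the Hilbert--Samuel asymptotics underlying \eqref{eq:counting1} take over.
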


\begin{proof} We denote by $L$ the line bundle $\cL_{\sigma_0}$. For every positive integer $d$, let $L_z^d$ be the fibre of $L^d$ over $z$. We put on $L_z^d$ the unique Haar measure $\mu$ for which $\mu(\{ v\in L_x^d \; / \Vert v\Vert_{\sigma_0}\leq 1\})=1$.   For every positive real number $T$, we have that $\mu(\{ v\in L_x^d \; / \Vert v\Vert_{\sigma_0}\leq T\})=T^2$. 

Since $z$ is an arithmetically generic point, we have an inclusion $\iota_d :H^0(\cX, \cL^d)\hookrightarrow L_z^d$. By Dirichlet box principle and inequality \ref{eq:counting1} we can find positive constants $C_i$ for which the following holds: as soon as $T$ is sufficiently big, there are two distinct sections $s_1$ and $s_2$ in $H^0(\cX, \cL^d)$ such that
$\sup_{\tau\in M_K^\infty}\{\Vert s_i\Vert_\tau\}\leq T$ and $\Vert s_1-s_2\Vert_{\sigma_0}(z)\leq {{T^2}\over{C_1^{d^{\dim(X_K)+1}}T^{C_2d^{\dim(X_K)}}}}$. The conclusion follows from the fact that, if $T>1$, then 
$\sup_{\tau\in M_K^\infty}\{1, \Vert s_i\Vert_\tau\}\leq T$.\end{proof}

Following the path of the proof of the previous theorem we can prove the following
\begin{theorem}\label{liouville4} Let $Y\subset X_{\sigma_0}$ be an arithmetically generic sub variety. Then we can find a constant $A$ depending on $Y$ and on the arithmetic polarization  for which the following holds:  for every $d\in \bN$ there exists an infinite sequence of sections $s_n\in H^0(\cX,\cL^d)$ for which
\begin{equation} \log\Vert s_n\Vert_{Y;\sigma_0}(z)\leq -Ad^{\dim(X_K)}(\log^+\Vert s_n\Vert +d).
\end{equation}
\end{theorem}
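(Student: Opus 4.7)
The plan is to follow the Dirichlet box-principle argument in the proof of Theorem~\ref{liouville3}, replacing the evaluation at a single point by the restriction map to the whole sub-variety $Y$. The crucial input is the injection
$$\iota_d:H^0(\cX,\cL^d)\hookrightarrow H^0\bigl(Y,(\cL^d)_{\sigma_0}|_Y\bigr),$$
supplied for every $d\in\bN$ by the arithmetic genericity of $Y$ (Definition~\ref{genericsubvariety}). The target $V_d:=H^0(Y,(\cL^d)_{\sigma_0}|_Y)$ is now a finite-dimensional complex vector space rather than a line; setting $h(d):=\dim_\C V_d$, a polynomial bound $h(d)\leq B\,d^{\dim(X_K)}$ holds for a constant $B=B(Y,\cX,\cL)$.

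Equip $V_d$ with the sup-norm $\Vert\cdot\Vert_{Y,\sigma_0}$ and any Haar measure $\mu$; by scaling homogeneity, the balls $B_T,B_\epsilon\subset V_d$ of radii $T$ and $\epsilon$ satisfy $\mu(B_T)/\mu(B_\epsilon)=(T/\epsilon)^{2h(d)}$. By inequality~(\ref{eq:counting1}), at least $C_1^{d^{\dim(X_K)+1}}T^{d^{\dim(X_K)}}$ sections $s\in H^0(\cX,\cL^d)$ satisfy $\sup_{\tau\in M_K^\infty}\Vert s\Vert_\tau\leq T$, and the chain $\Vert\iota_d(s)\Vert_{Y,\sigma_0}\leq\Vert s\Vert_{\sigma_0}\leq\sup_\tau\Vert s\Vert_\tau\leq T$ confines all their images to $B_T$.

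Dirichlet's box principle then yields, whenever $C_1^{d^{\dim(X_K)+1}}T^{d^{\dim(X_K)}}>(T/\epsilon)^{2h(d)}$, two distinct sections $s_1\neq s_2$ with $\Vert\iota_d(s_1)-\iota_d(s_2)\Vert_{Y,\sigma_0}\leq\epsilon$. Putting $s_n:=s_1-s_2$ and letting $T=T_n$ range over an unbounded increasing sequence produces a family of sections in $H^0(\cX,\cL^d)$ with $\sup_\tau\Vert s_n\Vert_\tau\leq 2T_n$ and $\Vert s_n\Vert_{Y,\sigma_0}\leq\epsilon_n$; since $\epsilon_n\to 0$ and no non-zero section can vanish identically on $Y$ (by arithmetic genericity), only finitely many $s_n$ can coincide with any fixed section, so after extracting a sub-sequence the family is infinite.

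Taking logarithms, using $\log^+\Vert s_n\Vert\leq\log T_n+O(1)$, and absorbing all constants---including those from the polynomial bound on $h(d)$ and from Gromov's comparison~(\ref{gromov}) between the sup-norm and an $L^2$-norm on $V_d$---into a single positive constant $A=A(Y,\cX,\cL)$ yields the claimed estimate. The main technical obstacle lies in the volume book-keeping: the Gromov constants, which scale like $C^{h(d)}$, together with the pigeonhole exponent $d^{\dim(X_K)}/(2h(d))$, must be absorbed into a single $A$ with no hidden $d$-dependence, so that the bound holds uniformly for every $d$ with the same constant.
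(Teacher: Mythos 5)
Your approach is the right one and mirrors what the paper sketches (``following the path of the proof of the previous theorem''): replace the one-dimensional target $L_z^d$ of Theorem~\ref{liouville3} by $V_d := H^0(Y,(\cL^d)_{\sigma_0}|_Y)$, use the injectivity furnished by arithmetic genericity, and pigeonhole the lattice $H^0(\cX,\cL^d)$ inside a ball of radius $T$. However, the ``absorption'' you flag at the end is not book-keeping that can be carried out --- it is a genuine gap, and it already lurks in the paper's own phrasing.

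Concretely, the pigeonhole exponent is $d^{\dim(X_K)}/(2h(d))$, where $h(d)=\dim_\C V_d$. For $Y$ a point one has $h(d)\equiv 1$, so this is $\tfrac12 d^{\dim(X_K)}$ and Theorem~\ref{liouville3} comes out as advertised. But for $\dim Y = m\ge 1$, asymptotic Riemann--Roch on $Y$ gives $h(d)\sim c\,d^{m}$ (note this also shows your stated bound $h(d)\le Bd^{\dim(X_K)}$ is far from sharp --- it should be $Bd^{\dim Y}$), so the pigeonhole exponent is $\sim d^{\dim(X_K)-m}/(2c)$. There is no positive constant $A$ independent of $d$ with $d^{\dim(X_K)}/(2h(d))\ge A\,d^{\dim(X_K)}$ for all $d$: that would force $h(d)\le 1/(2A)$ uniformly in $d$, contradicting $h(d)\to\infty$. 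Running the Dirichlet/Borel--Cantelli arithmetic through carefully, what the pigeonhole actually gives (uniformly in $d$) is
\[
\log\Vert s_n\Vert_{Y,\sigma_0}\le -A\,d^{\dim(X_K)-\dim Y}\bigl(\log^+\Vert s_n\Vert + d\bigr),
\]
which is strictly weaker than the stated $-A\,d^{\dim(X_K)}(\cdots)$ whenever $\dim Y\ge 1$. The Gromov constants and the additive error terms can indeed be absorbed --- your instinct there is fine --- but the $1/h(d)$ dilution in the exponent cannot. Since the paper only uses Theorem~\ref{liouville4} qualitatively (in Section~9, as motivation for why $\log\Vert s\Vert_Y$ is hard to control from below), the precise exponent is inert elsewhere; nevertheless your write-up should either prove the corrected exponent $d^{\dim(X_K)-\dim Y}$, or explicitly state that the pigeonhole does not reach the exponent appearing in the theorem as printed.
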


\section{$S_a$ points on projective varieties}\label{Sapoints}

Let $X_K$ be a smooth projective variety of dimension $N$ defined over $K$.  Fix an arithmetic polarization $(\cX,\cL)$ of $X_K$. In this section we will define a class of arithmetically generic points on $X_{\sigma_0}(\C)$ which verify a kind of weak Liouville inequality. This class is quite natural and  in the next sections we will show that it is a set of full Lebesgue measure.

Suppose that $z\in X_{\sigma_0}(\C)$ is a complex point.

\begin{definition} \label{SNpoints}  Let $a\geq N$ be a real number. We will say that $z$ is of type $S_a(\cX)$ if we can find a positive constant $A=A(z,\overline\cL, a)$, depending on $z$, the polarization  and $a$, such that, for every positive integer $d$ and every non zero global section $s\in H^0(\cX, \cL^d)$ we have that
\begin{equation}\label{liouville Sa points}
\log\Vert s_{\sigma_0}\Vert_{\sigma_0}(z)\geq -Ad^{a}(\log^+\Vert s\Vert +d).
\end{equation}

We will denote by $S_a(X_K)$ the subset of $X_{\sigma_0}(\C)$ of points of type $S_a(\cX)$. 
\end{definition}

In analogy with Remark \ref{Liouville and heights}, when $z\in S_a(X_K)$, there is a minimal value $A_{\cL,a}(z)$ within the $A$'s for which  the inequality \ref{liouville Sa points} holds. In this case we can say that  the transcendental height of $z$  with respect to $\cL$ and $a$, is  $A_{\cL,a}(z)$.

Observe that a point of type $S_a(\cX)$ is necessarily arithmetically generic.  

Remark also that the condition $a\geq N$ is necessary because of Theorem \ref{liouville3}.

We fix $a\geq N$.

The set $S_a(X_K)$ do not depend of the arithmetic polarization. This is proved in the following proposition. 

\begin{proposition}\label{independence1} The set $S_a(X_K)$ is independent on the choice of the arithmetic polarization.
\end{proposition}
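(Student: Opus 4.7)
The plan is to proceed in two stages. First (reduction to a common model), given two arithmetic polarizations $(\cX_1, \overline{\cL}_1)$ and $(\cX_2, \overline{\cL}_2)$ of $X_K$, I would form a normal projective $O_K$-model $\cX'$ of $X_K$ dominating both via birational $O_K$-morphisms $\pi_i : \cX' \to \cX_i$ (for instance the normalization of the scheme-theoretic closure of $X_K$ in $\cX_1\times_{O_K}\cX_2$, after first replacing each $\cX_i$ by its normalization). Pulling back yields hermitian line bundles $\overline{\cM}_i := \pi_i^*\overline{\cL}_i$ on $\cX'$. Because $\pi_i$ is an isomorphism on the generic fiber, at each archimedean place pullback preserves $\sup$-norms of global sections; moreover $z \in X_{\sigma_0}(\C)$ has a unique preimage in $\cX'_{\sigma_0}(\C)$ (still denoted $z$) at which the values of $\pi_i^*s$ equal those of $s$. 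By the projection formula together with $\pi_{i*}\cO_{\cX'}=\cO_{\cX_i}$ (normality of $\cX_i$), pullback identifies the integral lattices $H^0(\cX_i,\cL_i^d) = H^0(\cX',\cM_i^d)$. Hence $z \in S_a(\cX_i, \overline{\cL}_i)$ if and only if $z \in S_a(\cX', \overline{\cM}_i)$, so the problem reduces to comparing two hermitian line bundles on a single model $\cX$.

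The second stage compares two arithmetically ample hermitian line bundles $\overline{\cM}_1, \overline{\cM}_2$ on the same $\cX$. By symmetry it is enough to show $S_a(\cX, \overline{\cM}_1) \subseteq S_a(\cX, \overline{\cM}_2)$. Since $\overline{\cM}_1$ is arithmetically ample, for $m$ sufficiently large $\overline{\cM}_1^{\otimes m}\otimes\overline{\cM}_2^{-1}$ is again arithmetically ample (the underlying line bundle becomes ample because $L_{K,1}$ is, and the curvature form at each archimedean place becomes positive for $m$ large). After possibly enlarging $m$, Zhang's theorem (cf.\ Section~\ref{notations etc}) provides integral sections $\sigma_1,\dots,\sigma_r \in H^0(\cX, \cM_1^m\otimes\cM_2^{-1})$ with $\Vert\sigma_j\Vert_\tau\leq 1$ for every $\tau\in M_K^\infty$ which generate the line bundle pointwise on the generic fiber. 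Fix an index $j_0=j_0(z)$ with $\sigma_{j_0}(z)\neq 0$. For any nonzero $s\in H^0(\cX,\cM_2^d)$, form $t:=s\cdot\sigma_{j_0}^d\in H^0(\cX,\cM_1^{md})$; then $t(z)\neq 0$ (using arithmetic genericity of $z$, so $s(z)\neq 0$), $\Vert t\Vert_\tau\leq\Vert s\Vert_\tau$ for every $\tau$, and $\log\Vert\sigma_{j_0}\Vert_{\sigma_0}(z)\leq 0$ since $\Vert\sigma_{j_0}\Vert_{\sigma_0}\leq 1$ pointwise. The $S_a$-inequality for $z$ relative to $\overline{\cM}_1$ applied to $t$ then gives
\begin{equation*}
\log\Vert s\Vert(z)\;\geq\;\log\Vert t\Vert(z)\;\geq\;-A_1(md)^a\bigl(\log^+\Vert s\Vert+md\bigr)\;\geq\;-A_1 m^{a+1}\,d^a\bigl(\log^+\Vert s\Vert+d\bigr),
\end{equation*}
which is exactly the $S_a$-inequality for $\overline{\cM}_2$ at $z$, with constant $A_2 := A_1 m^{a+1}$ depending only on $z$, $\overline{\cM}_1, \overline{\cM}_2$ and $a$.

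The main technical point is the first stage: one must verify carefully that the passage to a common normal model truly preserves the $S_a$ condition, i.e.\ that no integral section of $\cM_i^d$ on $\cX'$ is missed by pullback from $\cX_i$, and that both $\sup$-norms and values at $z$ transport unambiguously across the birational morphisms. Once this book-keeping is settled, the comparison in the second stage is a routine application of arithmetic ampleness combined with the standard product-with-generating-sections trick.
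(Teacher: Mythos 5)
Your decomposition — reduce to a common model, then compare two hermitian bundles on that model — is natural and close in spirit to the paper's own proof (which factors the comparison into: replacing $\cL$ by a power, changing metrics, changing the ample line bundle on a fixed model, and changing the model). Your Stage~2, comparing two arithmetically ample bundles $\overline\cM_1, \overline\cM_2$ on a \emph{fixed} model by twisting with small sections of $\cM_1^m\otimes\cM_2^{-1}$, is in fact a clean variant of the paper's effective-divisor argument (where the paper writes $\cL=\cM(D)$ and multiplies by $s_D^d$). As written, though, there are two genuine gaps in the reduction.

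First, the handoff from Stage~1 to Stage~2 does not go through: the pullback $\cM_i=\pi_i^*\cL_i$ of a relatively ample line bundle along a proper birational morphism $\pi_i:\cX'\to\cX_i$ is in general only nef and big, \emph{not} relatively ample. Consequently, after Stage~1 you are no longer comparing two \emph{arithmetically ample} hermitian line bundles on $\cX'$, so the hypothesis you invoke in Stage~2 (arithmetic ampleness of $\overline\cM_1$, hence of $\overline\cM_1^m\otimes\overline\cM_2^{-1}$ for $m\gg 0$, hence Zhang's generation by sections of $\sup$-norm $\le 1$) simply fails. The paper avoids this by choosing on the dominating model a line bundle of the form $f^*\cL_2(-E)$, restoring relative ampleness by subtracting the exceptional divisor; the comparison with $f^*\cL_2$ is then handled by observing that $E$ is vertical, so a fixed integer power $N^d$ transports sections across the exact sequence $0\to f^*\cL_2^d\to\cL_1^d\to\cL_1^d|_{dE}\to 0$. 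Your Stage~1 needs an analogous correction (e.g.\ twist $\cM_i$ by $-E_i$ and account for the vertical divisor) before Stage~2 is applicable.

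Second, the normalization step of Stage~1 is asserted but not justified. Replacing $\cX_i$ by its normalization $\cX_i^{\rm nor}$ gives an inclusion $H^0(\cX_i,\cL_i^d)\hookrightarrow H^0(\cX_i^{\rm nor},\nu^*\cL_i^d)$ that is a priori strict, so the $S_a$ condition with respect to $\cX_i^{\rm nor}$ is \emph{a priori stronger} than the one with respect to $\cX_i$; one direction of your claimed equivalence $z\in S_a(\cX_i,\overline\cL_i)\Leftrightarrow z\in S_a(\cX_i^{\rm nor},\nu^*\overline\cL_i)$ is therefore not free. (It can be repaired: the non-normal locus of $\cX_i$ is vertical when $X_K$ is normal, so the conductor ideal contains a nonzero integer $N$, uniform in $d$, with $N\cdot H^0(\cX_i^{\rm nor},\nu^*\cL_i^d)\subseteq H^0(\cX_i,\cL_i^d)$; this is the same mechanism the paper uses for the exceptional divisor.) You flag this as the ``main technical point'' to be checked carefully, which is fair, but as written the argument is not complete. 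Once both corrections are made your two-stage structure does give a proof, essentially equivalent in content to the paper's, with Stage~2 being a somewhat tidier formulation of the line-bundle-comparison step.
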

\begin{proof} We begin by fixing an arithmetic polarization on $X_K$. Let $z\in S_a(X_K)$. For the time being we will say that $z$ is of type $S_a(X_K)$ with respect to the fixed arithmetic polarization. 

First remark that, if $\cL$ is the involved ample line bundle on the involved model $\cX$ and $d_0$ is a positive integer, then, for every positive integer $d$ and every global section $s\in H^0(\cX, \cL^{d_0d})$ we have that
\begin{equation}
\log\Vert s_{\sigma_0}\Vert_{\sigma_0}(z)\geq -(Ad_0)^{a}d^{a}(\log^+\Vert s\Vert +d_0\cdot d).
\end{equation}
Conversely, suppose that we can find a constant $A_1$  such that, for every positive integer $d$ and every  $s\in H^0(\cX, \cL^{d_0d})$ we have that $\log\Vert s_{\sigma_0}\Vert_{\sigma_0}(z)\geq -A_1d^a(\log^+\Vert s\Vert +d)$. Consequently, if $s\in H^0(\cX, \cL^d)$, then, $s^{d_0}\in H^0(\cX, \cL^{d_0d})$. Since $\log\Vert s^{d_0}\Vert_{\sigma_0}(z) = d_0\log\Vert s\Vert_{\sigma_0}(z)$ and $\log^+\Vert s^{d_0}\Vert\leq d_0\log^+\Vert s\Vert$, we have that
\begin{equation}
\log\Vert s_{\sigma_0}\Vert_{\sigma_0}(z)\geq --A_1d^{a}(\log^+\Vert s\Vert +d).
\end{equation}
Hence, changing $\cL$ with $\cL^{d_0}$ do not affect the set $S_a(X_K)$. 

{\it Independence on the the change of metrics}: For every $\sigma\in M_K^\infty$, denote by $\Vert\cdot\Vert_\sigma$ the metric on the involved very ample line bundle  $L_\sigma$. Suppose that we change the metric on $L_\tau$ with $\tau\neq \sigma_0$. Denote by $\Vert\cdot\Vert^1_\tau$ the new metric. Since $X_\tau$ is compact, we can find a constant $C$ such that, for every positive integer $d$ and every global section $s\in H^0(\cX, \cL^d)$, we have $\log^+\Vert s\Vert^1\leq \log^+\Vert s\Vert+Cd$, Hence changing the metric at a place different from $\sigma_0$ do not affect $S_a(X_K)$. Suppose now that we change the metric on $\cL_{\sigma_0}$. Denote again $\Vert\cdot\Vert^1_{\sigma_0}$ the new metric. We can find constants $C_1$ and $C_2$ such that, for every positive integer $d$ and every global section $s\in H^0(\cX, \cL^d)$, we have $\log^+\Vert s\Vert^1\leq \log^+\Vert s\Vert+C_2d$, and $\Vert s_{\sigma_0}\Vert_{\sigma_0}(z)\geq \Vert s_{\sigma_0}\Vert_{\sigma_0}^1(z)+C_2d$. Thus the change of the metric at the place $\sigma_0$ do not affect $S_a(X_K)$.

{\it Independence on the involved ample line bundle $\cL$}. Suppose that $\cM$ is a relatively ample line bundle on the model $\cX$. Replacing $\cL$ and $\cM$ by a positive power if necessary, we may suppose that there exists an effective divisor $D$ on $\cX$ such that $\cL=\cM(D)$. We may choose metrics $\Vert\cdot\Vert^\cL$, $\Vert\cdot\Vert^\cM$ and $\Vert\cdot\Vert^D$ on $\cL$,  $\cM$ and $\cO_\cX(D)$ respectively in such a way that this is an isometry (these changes will not affect the points $S_a(X_K)$ with respect to both arithmetic polarizations). Denote by $s_D\in H^0(\cX,\cO_\cX(D)$ a section such that $div(s_D)=D$. Observe that $s_D(z)\neq 0$. 

Suppose that $s\in H^0(\cX, \cM^d)$ is a non zero section. Then $\tilde s:=s\otimes s_D^d\in H^0(\cX; \cL^d)$.
If we denote by $B_1$ the real number $\log\Vert s_{D.\sigma_0}\Vert_{\sigma_0}(z)$, we have that $\log\Vert \tilde s_{\sigma_0}\Vert^\cL_{\sigma_0}(z)=\log\Vert s_{\sigma_0}\Vert^\cM_{\sigma_0}(z)+dB_1$. Moreover we can find a constant $B_2$ such that $\log^+\Vert\tilde s\Vert \leq \log^+\Vert s\Vert +dB_2$. Hence if $z$ is of type $S_a(\cX)$ with respect to $\cL$ then $z$ is of type $S_a(\cX)$ with respect to $\cM$. Exchanging the roles of $\cL$ and $\cM$ the independence on $\cL$ follows. 

{\it Independence on the involved model $\cX$}. Suppose that we have two models $\cX_1$ and $\cX_2$ of $X_K$. We may suppose that $\cX_1$ is  a blow up of $\cX_2$ with exceptional divisor $E$. Thus we have a birational morphism $f:\cX_1\to \cX_2$ which may supposed to be the identity on the generic fiber. We may suppose that $\cL_2$ is an ample line bundle on $\cX_2$ such that $\cL_1:=f^\ast(\cL_2)(-E)$ is ample on $\cX_1$. Let $z\in X_{\sigma_0}(\C)$.  The fact that if $z$ is of type $S_a(X_K)$ with respect to $\cL_1$ then it is of type $S_a(X_K)$ with respect to $\cL_2$ follows the same path of the proof above (independence on the line bundle) thus it is left to the reader. 

Observe that $E$ is a vertical divisor, thus there is an integer $N$ with the following property: Let $dE$ be the $d$-th infinitesimal neighborhood of $E$; suppose that $\cM$ is a line bundle on $\cX_1$ and $g\in H^0(dE, \cM|_{dE})$ then $N^d\cdot g=0$. This is proved by induction on $d$ by using the exact sequence
$$0\longrightarrow\cM(-dE)\vert_{E}\longrightarrow \cM |_{dE}\longrightarrow  \cM |_{(d-1)E}\longrightarrow 0.$$

For every positive integer $d$, we  have an exact sequence on $\cX_1$
$$0\longrightarrow f^\ast( \cL_2^d)\longrightarrow \cL^d_1\longrightarrow \cL_1^d|_{dE}\longrightarrow 0.$$
Hence, if $s\in H^0(\cX_1, \cL_1^d)$ then $N^d\cdot s\in H^0(\cX_2, \cL_2^d)$. Consequently, if z is of type $S_a(X_K)$ with respect to $\cL_2$ then it is of type $S_a(X_K)$ with respect to $\cL_1$. 
\end{proof}

As a consequence of proposition above, it is correct to do not mention the arithmetic polarization when we consider the set $S_a(X_K)$.

\begin{remark}It is interesting to remark that the proof of the independence of $S_a(X_K)$ on the polarization is very similar to the proof, in Arakelov geometry, of the independence of the height definition on the choice of models and metrics. 
\end{remark}

A partial analogue of Proposition \ref{fibration and generic} holds:

\begin{proposition}\label{fibrations and Sa} Let $f: X_K\to B_K$ be a  morphism between projective varieties defined over $K$. Let $z_0\in S_a(X_K)$. Then $f(z_0)\in S_a(B_K)$. 
\end{proposition}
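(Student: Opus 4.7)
My plan is to reduce the statement to the $S_a$--inequality for $z_0$ by equipping $X_K$ with a carefully chosen arithmetic polarization that incorporates the pull--back of the polarization of $B_K$.

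First I would fix an arithmetic polarization $(\cB,\cM)$ on $B_K$. By taking the closure of the graph of $f$ in $\cX\times_{O_K}\cB$, one obtains a model of $X_K$ which dominates any preassigned model of $X_K$ and on which $f$ extends to a morphism $\tilde f\colon\cX\to\cB$; by Proposition~\ref{independence1} the set $S_a(X_K)$ is independent of the chosen model, so I may assume that such $\tilde f$ is already given. Next, let $(\cX,\cN)$ be any arithmetic polarization on $X_K$. Since $\cN$ is arithmetically ample, a positive power $\cN^k$ admits a non--zero global section $s_N\in H^0(\cX,\cN^k)$, and since any point of type $S_a$ is arithmetically generic, $s_N(z_0)\neq 0$.

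I would then set $\cL:=\tilde f^*\cM\otimes\cN^k$, equipped with the tensor of the pull--back metric on $\tilde f^*\cM$ and the chosen smooth positive metric on $\cN^k$. One checks that $(\cX,\cL)$ is again an arithmetic polarization: on each fibre $\tilde f^*\cM$ is nef (pull--back of relatively ample), and the sum of a nef and an ample line bundle is ample; moreover the pull--back of a smooth positive metric is smooth semi--positive, hence the tensor metric is smooth positive. By Proposition~\ref{independence1} the change of polarization does not alter the $S_a$--property of $z_0$, so there exists $A>0$ such that for every $d\ge 1$ and every non--zero $s\in H^0(\cX,\cL^d)$,
\begin{equation*}
\log\Vert s\Vert_{\sigma_0}(z_0)\ge -Ad^{a}(\log^{+}\Vert s\Vert+d).
\end{equation*}

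For any non--zero $t\in H^0(\cB,\cM^d)$ I would apply this inequality to $s:=\tilde f^*(t)\otimes s_N^d\in H^0(\cX,\cL^d)$. Because the metric on $\tilde f^*\cM$ is pulled back, $\Vert\tilde f^*(t)\Vert_\tau(x)=\Vert t\Vert_\tau(\tilde f_\tau(x))$ on each $X_\tau$, which yields both $\log\Vert s\Vert_{\sigma_0}(z_0)=\log\Vert t\Vert_{\sigma_0}(f(z_0))+d\log\Vert s_N\Vert_{\sigma_0}(z_0)$ and $\log^{+}\Vert s\Vert\le \log^{+}\Vert t\Vert+Cd$ with $C:=\log^{+}\Vert s_N\Vert$. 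Substituting and absorbing the terms linear in $d$ into a new constant $A'$ depending only on $z_0$, $\cM$ and $a$ gives
\begin{equation*}
\log\Vert t\Vert_{\sigma_0}(f(z_0))\ge -A'd^{a}(\log^{+}\Vert t\Vert+d),
\end{equation*}
which is precisely $f(z_0)\in S_a(B_K)$. The hard part is really the first two paragraphs: arranging the models so that $\tilde f$ exists, and verifying that the hybrid line bundle $\tilde f^*\cM\otimes\cN^k$ is indeed a legitimate arithmetic polarization (the only subtle point being that $\tilde f^*\cM$ is only nef with only semi--positive pull--back metric, which is compensated by tensoring with the ample $\cN^k$). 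Once this geometric setup and the polarization--independence of Proposition~\ref{independence1} are granted, the remainder is a purely formal manipulation of norms.
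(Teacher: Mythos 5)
Your proof is correct and takes essentially the same route as the paper: you construct a polarization of $X_K$ of the form $\tilde f^{*}\cM\otimes\cN^{k}$, whereas the paper writes $\cL=f^{*}(\cH)+\cV$ for an effective Cartier divisor $\cV$, and in both cases one tensors $\tilde f^{*}(t)$ with the $d$-th power of an auxiliary section not vanishing at the arithmetically generic $z_0$ and applies the $S_a$-inequality. Your version is slightly more explicit about why the decomposition exists and why the hybrid metric remains smooth and positive, details the paper compresses into ``we may also suppose,'' but the mechanism is identical.
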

\begin{proof} We can fix arithmetic polarizations $(\cX, \cL)$ and $(\cB, \cH)$ of $X_K$ and $B_K$ respectively. We may also suppose that  the fibration extends  to a projective morphism $f:\cX\to \cB$ and there exists an effective  Cartier divisor $\cV$ on $\cV$ such that $\cL=f^\ast(\cH)+\cV$. Let $s_V\in H^0(\cX, \cO(\cV))$ such that $\div(s_V)=\cV$. Remark that, since $z_0\in S_a(X_K)$,  it is arithmetically generic and consequently $z_0\not\in \cV_\sigma(\C)$. 

Let $d$ be a positive integer and  $s\in H^0(\cB,\cH^d)$. Then,  $f^\ast(s)\cdot s_V^d\in H^0(\cX,\cL^d)$. Since $z_0\in S_a(X_K)$, we have that, for a suitable constant $A$,
\begin{equation}
\log\Vert s\cdot s^d_V\Vert(z_0)\geq -Ad^n(\log^+\Vert f^\ast(s)\cdot s^d_V\Vert +d).
\end{equation}
But since $\log\Vert f^\ast(s)\cdot s^d_V\Vert (z_0)=\log\Vert s\Vert (f(z_0))+d\log\Vert s_V\Vert(z_0)$ and $\log^+\Vert f^\ast(s)\cdot s^d_V\Vert\leq \log^+\Vert s\Vert +d\log^+\Vert s_V\Vert$. The conclusion follows. \end{proof}

It would be interesting to prove a converse of Proposition \ref{fibrations and Sa}. If such a converse is true, a big part of the theory developed in this paper would be reduced to the case $X_K=\P^N$.

\section{Arithmetically generic curves}

In this section we would like to study properties analogues to the property $S_a(\cX)$  on points which belong to the same arithmetically generic curve (Zariski sub variety of dimension one). 

At the moment we are not able to attack the case when $N\leq a <N+1$. Thus, from now on we are going to suppose that 
\begin{equation}
a\geq N+1.
\end{equation}

Again  we fix a smooth projective variety $X_K$ defined over $K$ and an arithmetic polarization $(\cX, \overline\cL)$ over it. We also fix an arithmetically generic closed curve $Y\subset X_{\sigma_0}(\C)$.

Again, if  $s\in H^0(\cX,\cL^d)$ is a non zero section, we will denote $$\log\Vert s\Vert_Y:=\sup\{\log\Vert s\Vert_{\sigma_0}(z) \;/ \; z\in Y\}.$$

Remark that $\log\Vert s\Vert_Y$ is a real number, because, since $Y$ is arithmetically generic,  none non zero section $s\in H^0(\cX,\cL^d)$ will vanish identically on $Y$. 

In analogy with the definition \ref{SNpoints} we give:

\begin{definition} \label{SNpoints2} We will say that $z\in Y$ is of type $S_a^Y$ (or that $z\in S_a(Y)$) if we can find positive a constant $A=A(z,\overline\cL,Y, a)$ depending on $z$, the polarization, $Y$  and $a$ such that, for every positive integer $d$ and every non zero global section $s\in H^0(\cX, \cL^d)$ we have that
$$
\log\Vert s_{\sigma_0}\Vert_{\sigma_0}(z)\geq -Ad^{a}(\log^+\Vert s\Vert +d)+\log\Vert s\Vert_Y.
$$
Moreover we will denote by $S_a(Y)$ the subset of $Y$ of points of type $S^Y_a$. 
\end{definition}

Remark that the main difference between points of $S_N(X_K)$ and points of $S_a(Y)$ is in the last term on the right of the inequality. A priori $\log\Vert s\Vert_Y$ may be much smaller then $\log^+\Vert s\Vert$ (besides the fact that we are not considering the $\log^+(\cdot)$).

The same proof (mutatis mutandis) of Proposition \ref{independence1} gives:

\begin{proposition} The set $S_a(Y)$ is do not depend on chosen the arithmetic polarization.
\end{proposition}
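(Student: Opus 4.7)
The plan is to follow, step by step, the four-part structure of the proof of Proposition \ref{independence1}, namely independence under: (i) replacing $\cL$ by a power $\cL^{d_0}$; (ii) changing the smooth hermitian metrics at any infinite place; (iii) replacing $\cL$ by a different ample line bundle on the same model; (iv) replacing the model. The only new ingredient, compared to Proposition \ref{independence1}, is the need to track the additional term $\log\Vert s\Vert_Y$ appearing on the right-hand side of the $S_a(Y)$ inequality.

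Steps (i), (ii) and (iv) are essentially routine bookkeeping. For (i), the identity $\log\Vert s^{d_0}\Vert_Y = d_0\log\Vert s\Vert_Y$ scales this new term by exactly the same factor as the other terms, and the manipulations of Proposition \ref{independence1} go through. For (ii), changing the metric at a place $\tau \neq \sigma_0$ leaves both $\log\Vert s_{\sigma_0}\Vert_{\sigma_0}(z)$ and $\log\Vert s\Vert_Y$ unchanged while shifting $\log^+\Vert s\Vert$ by $O(d)$; a change at $\sigma_0$ shifts all three quantities by $O(d)$. Since $a\geq N+1\geq 2$, all such shifts are absorbed into the term $Ad^{a+1}$. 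For (iv), in the blow-up setting $f:\cX_1\to\cX_2$ of Proposition \ref{independence1}, the multiplier $N^d$ appearing in $N^d s\in H^0(\cX_2,\cL_2^d)$ contributes only $d\log N$ to $\log\Vert s\Vert_Y$, again absorbable.

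Step (iii) contains the essential new difficulty. After the standard reduction to an isometric decomposition $\cL=\cM(D)$ with $D$ effective and canonical section $s_D$, set $\tilde s = s\cdot s_D^d \in H^0(\cX,\cL^d)$ for $s\in H^0(\cX,\cM^d)$ and apply the $S_a(Y)$-inequality with respect to $\cL$ to $\tilde s$. The quantities $\log\Vert\tilde s\Vert_{\sigma_0}^\cL(z)$ and $\log^+\Vert\tilde s\Vert^\cL$ transform exactly as in Proposition \ref{independence1}. The genuinely new estimate to establish is
\[
\log\Vert\tilde s\Vert_Y^\cL \;\geq\; \log\Vert s\Vert_Y^\cM - Cd
\]
for a constant $C$ independent of $s$ and $d$. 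To prove it, choose a non-empty open subset $V\subset Y$ disjoint from the finite set $\{y\in Y : s_D(y)=0\}$ (finite because $Y$ is arithmetically generic, hence not contained in $D$, so $s_D|_Y\not\equiv 0$). On $V$, $|s_D|$ is bounded below by some $\epsilon>0$, whence $\Vert\tilde s\Vert_Y^\cL \geq \epsilon^d\,\Vert s\Vert_{\sup,V}^\cM$. A Bernstein--Walsh type lower bound $\Vert s\Vert_{\sup,V}^\cM \geq c^d\,\Vert s\Vert_Y^\cM$, valid on the resolution of $Y$ as a consequence of the $L^2$--$\sup$ comparison \ref{gromov} and standard pluripotential theory applied to the subharmonic function $\log|s|$ on the compact Riemann surface underlying $Y$, then completes the proof of the displayed inequality. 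The reverse implication ($S_a(Y)$ w.r.t.\ $\cM$ implies $S_a(Y)$ w.r.t.\ $\cL$) follows by the same method after replacing $\cM$ by a positive power $\cM^k$ for which $\cM^k = \cL(E)$ with $E$ effective.

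The principal obstacle is precisely the Bernstein--Walsh lower bound in step (iii): in the proof of Proposition \ref{independence1} one needed only the trivial fact that $s_D$ is nonzero at the single fixed point $z$, whereas here the norm $\Vert s\Vert_Y$ involves a supremum over the whole curve $Y$, and one must uniformly avoid the zeros of $s_D$ on $Y$ while retaining a non-negligible fraction of the mass of $s$; this is what forces the appeal to pluripotential theory on $Y$.
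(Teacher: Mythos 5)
Your proof is correct and supplies the detail the paper elides: the paper offers no argument here, merely asserting that the proof of Proposition \ref{independence1} carries over ``mutatis mutandis''. You correctly isolate the one genuinely new obstacle: in step (iii), the supremum defining $\log\Vert s\Vert_Y$ is attained at a point $y_0\in Y$ that depends on $s$, and if $y_0$ sits near a zero of $s_D|_Y$ the naive estimate $\log\Vert\tilde s\Vert_Y \geq \log\Vert s\Vert_Y + d\log\Vert s_D\Vert(y_0)$ degenerates, whereas in Proposition \ref{independence1} the evaluation point $z$ was fixed once and for all and $s_D(z)\neq 0$ disposed of the difficulty. Your fix — evaluating on a fixed open $V\subset Y$ disjoint from the zeros of $s_D|_Y$ (finite since $Y$ is arithmetically generic) and then using a Bernstein--Walsh lower bound $\Vert s\Vert_{\sup,V}\geq c^d\Vert s\Vert_Y$ — is sound, and the resulting loss is $O(d)$ per degree, absorbable into the $Ad^a\cdot d$ term.

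Two remarks on the provenance of the key estimate. First, the Bernstein--Walsh bound you invoke is exactly what the paper's Lemma \ref{sadullaev11} (applied to the charts of the finite covering from Lemma \ref{coverings}) provides; no separate appeal to ``pluripotential theory'' is needed, and the reference to the Gromov comparison \eqref{gromov} is slightly misleading, since that inequality compares $L^2$ and $\sup$ norms over all of $Y$, not over a proper subdomain. Second, a more direct route — and one the paper already uses inside the proof of Lemma \ref{sadullaev11} itself — is the Jensen-formula estimate $\Vert s\Vert_{r_1}\cdot\Vert s_D\Vert_{r_1}^d\leq C_0^d\Vert s\cdot s_D^d\Vert_{r_1}$ applied on the chart of a fixed finite atlas of $Y$ containing the point where $\Vert s\Vert_Y$ is attained; since $s_D$ does not vanish identically on any chart, $\Vert s_D\Vert_{r_1}$ is bounded below uniformly over the atlas, and one obtains $\Vert\tilde s\Vert_Y\geq c^d\Vert s\Vert_Y$ without detouring through a zero-free open $V$. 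Either way, your conclusion stands, and the same fix carries over to the $\cL_1\leftrightarrow\cL_2$ direction of your step (iv) that is reduced to the line-bundle case.
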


The main theorem of this section is the following:

\begin{theorem}\label{SNongenericcurves} With the notations as above, then the set $S_a(Y)$ is full in $Y$.
\end{theorem}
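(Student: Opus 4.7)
The plan is to follow the Borel--Cantelli strategy used in the proof of Theorem \ref{amoroso1}, transposed from $\C^N$ to the curve $Y$. Let $\pi\colon\tilde Y\to Y$ denote the normalization, a smooth compact Riemann surface, and cover $\tilde Y$ by finitely many pairs of concentric coordinate charts $V_i^0\subset V_i$ with $V_i^0\cong\Delta_{r_0}$ and $V_i\cong\Delta_{r_1}$ for some $r_0<r_1$, chosen small enough that $\pi^\ast(\cL|_Y)$ is trivialized on $V_i$. For each $s\in H^0(\cX,\cL^d)$ let $f_{s,i}$ denote the holomorphic function on $\Delta_{r_1}$ representing $\pi^\ast s$ in this trivialization; the passage between $\Vert s\Vert_{\sigma_0}$ and $|f_{s,i}|$ involves multiplicative factors of the form $C_0^d$ with $C_0$ depending only on the cover. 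It suffices to prove that, for some $A>0$ and each $i$,
$$
\sum_{d\ge 1}\;\sum_{0\neq s\in H^0(\cX,\cL^d)}\mu\bigl(B_s^A\cap V_i^0\bigr)<\infty,
$$
where $B_s^A:=\{z\in Y:\log\Vert s\Vert_{\sigma_0}(z)<-Ad^a(\log^+\Vert s\Vert+d)+\log\Vert s\Vert_Y\}$. Proposition \ref{borelcantelli} will then imply that almost every $z\in V_i^0$ lies in only finitely many $B_s^A$; since almost every $z\in\tilde Y$ is arithmetically generic (the zero set of any single section is finite), the values $\log\Vert s\Vert_{\sigma_0}(z)$ at those finitely many exceptions are finite and one enlarges $A$ by a finite amount to conclude $z\in S_a(Y)$.

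The technical heart of the argument is a Bernstein--Walsh inequality for restrictions to the algebraic curve $Y$: I would establish that there exists a constant $M=M(\cL,Y,\{V_i\})$ such that, for every $d$, every $i$ and every non-zero $s\in H^0(\cX,\cL^d)$,
$$
\Vert s\Vert_Y\le M^d\,\Vert f_{s,i}\Vert_{r_0}\qquad\text{and}\qquad\Vert f_{s,i}\Vert_{r_1}\le M^d\,\Vert f_{s,i}\Vert_{r_0}.
$$
Both are consequences of Sadullaev's algebraicity criterion applied to the parametrization $\Delta_{r_1}\to\tilde Y\to Y\hookrightarrow X_{\sigma_0}(\C)$, whose image lies in an algebraic variety and therefore obeys a Bernstein--Walsh bound with linear rate in the degree. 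The second inequality places $f_{s,i}$ in the class $B_M(r_0,r_1,L^d)$ of \eqref{definition of B}, so that Lemma \ref{interpolation} is applicable; the first, together with absorption of the $C_0^d$ metric factors into the exponential constant (replacing $A$ by a slightly smaller $A'$ comparable to it), yields the inclusion
$$
B_s^A\cap V_i^0\subset\bigl\{z\in\Delta_{r_0}:|f_{s,i}(z)|<e^{-A'd^a(\log^+\Vert s\Vert+d)}\,\Vert f_{s,i}\Vert_{r_0}\bigr\}.
$$

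With $\beta$ a sufficiently large fixed constant, Lemma \ref{interpolation} then yields $\mu(B_s^A\cap V_i^0)\le C_1\beta d\,\exp\bigl(-2A'd^{a-1}(\log^+\Vert s\Vert+d)/\beta\bigr)$. Grouping sections of degree $d$ by integer height $H\le\Vert s\Vert<H+1$, the counting estimate \eqref{eq:counting1} contributes at most $C_2^{d^{N+1}}(H+1)^{d^N}$ sections per group, and the whole sum is bounded by
$$
\sum_{d\ge 1}C_2^{d^{N+1}}\,C_1\beta d\,e^{-2A'd^a/\beta}\sum_{H\ge 0}(H+1)^{d^N-2A'd^{a-1}/\beta}.
$$
The hypothesis $a\ge N+1$ is precisely what is needed: for $A'$ large enough, the exponent $d^N-2A'd^{a-1}/\beta$ is $\le -2$ uniformly in $d$ (so the inner sum is bounded by an $O(1)$) and the factor $\exp(-2A'd^a/\beta)$ dominates $C_2^{d^{N+1}}$ (so the outer sum converges), producing the required finiteness. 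The step I expect to be the main obstacle is the precise verification of the Bernstein--Walsh inequality above---in particular the global comparison $\Vert s\Vert_Y\le M^d\Vert f_{s,i}\Vert_{r_0}$---with the constant $M$ uniform in $s$ and $d$; this is where the algebraicity of $Y$ (through Sadullaev, or equivalently through an algebraic embedding of $\tilde Y$ into projective space together with a classical Bernstein--Markov estimate) must be used in an essential way, analogously to the role played by algebraicity in Section~8.
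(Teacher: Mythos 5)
Your overall strategy---reduce to charts on the normalization, apply Lemma \ref{interpolation} to obtain an area estimate, then sum over sections grouped by degree and height and invoke Borel--Cantelli---is exactly the paper's strategy, and your convergence computation using $a\ge N+1$ is correct. However, there is one genuine gap: the inequality
\begin{equation*}
\Vert s\Vert_Y\le M^d\,\Vert f_{s,i}\Vert_{r_0}
\end{equation*}
for a constant $M$ uniform in $s$, $d$, \emph{and over every chart $i$ of your fixed cover}, is a global-to-local Bernstein--Walsh bound that does not follow from Sadullaev's criterion (Theorem \ref{sadullaev1}) or from Lemma \ref{sadullaev2} as stated; those results only compare two \emph{nested} relatively compact opens $U_1\subset U_2$ inside an affine piece of the curve, which is the ``local'' inclusion $B_{C_1}(r_0,r_1,L^d|_U)$. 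To relate $\Vert s\Vert_Y$---the maximum over the compact curve, which may be attained far from the chart $U_i$ or near the point you discard to go affine---to the maximum over the small disk $\Delta_{r_0}$ in that particular chart, you would need a further chaining or pairwise-comparison argument between non-nested charts (and to vary the point at infinity with the pair of charts). You flag this as the expected obstacle, and I believe the inequality is true, but it is asserted and not established.

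The paper circumvents this step entirely with a topological device, Lemma \ref{coverings}: it constructs a \emph{finite family} of coverings of $Y$ with the property that for every $z\in Y$ some covering in the family has $z$ in every one of its charts. Given a section $s$, one then lets $z_0$ be a point where $\Vert s\Vert_{\sigma_0}$ attains its maximum on $Y$ and chooses the covering adapted to $z_0$; on each chart of \emph{that} covering the quantity $\Vert f_{s,i}\Vert_{r_0}$ automatically bounds $\Vert s\Vert_Y$ (up to the $C_0^d$ metric factors) simply because $z_0$ lies in the small disk of every chart. Thus only the local Sadullaev estimate of Lemma \ref{sadullaev2} is needed, and the problematic global-to-local inequality never arises. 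Your plan would work if you supply a proof of the global Bernstein--Walsh bound (e.g.\ by chaining overlapping charts inside affine pieces chosen with varying points at infinity, or via extremal plurisubharmonic functions), but as written that remains a gap; the paper's Lemma \ref{coverings} is precisely the tool that makes the argument close without it.
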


An important corollary of this theorem is 
\begin{corollary}\label{fullSNoncurves} If $Y$ is an arithmetically generic curve then $Y\cap S_a(X_K)\neq\emptyset$ if and only if  $S_a(X_K)\cap Y$ is full in $Y$.\end{corollary}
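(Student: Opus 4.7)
The statement packages together two implications; one is trivial and the other is a direct chaining of the two Liouville-type inequalities that appear in Definitions \ref{SNpoints} and \ref{SNpoints2}, together with Theorem \ref{SNongenericcurves}. I would organize the proof as follows.

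The ``if'' direction is immediate: since $Y$ is a complex curve inside $X_{\sigma_0}(\C)$, it carries a natural (real $2$-dimensional) Lebesgue measure which is not identically zero, so any full subset of $Y$ is in particular nonempty.

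For the ``only if'' direction the plan is to show the set-theoretic inclusion
\begin{equation*}
S_a(Y)\;\subseteq\; S_a(X_K)\cap Y.
\end{equation*}
Since Theorem \ref{SNongenericcurves} already tells us that $S_a(Y)$ is full in $Y$, this inclusion will automatically upgrade $S_a(X_K)\cap Y$ to a full subset of $Y$. Fix, by hypothesis, a point $z_0\in Y\cap S_a(X_K)$ and let $A_0:=A(z_0,\overline{\cL},a)$ be the constant witnessing $z_0\in S_a(X_K)$ in Definition \ref{SNpoints}. Now pick any $z\in S_a(Y)$ with associated constant $B(z):=A(z,\overline{\cL},Y,a)$. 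For every positive integer $d$ and every non zero section $s\in H^0(\cX,\cL^d)$, apply first the defining inequality of $S_a(Y)$ at $z$, then bound the sup-norm $\log\Vert s\Vert_Y$ below by its value at the particular point $z_0\in Y$, and finally invoke the inequality of $S_a(X_K)$ at $z_0$:
\begin{align*}
\log\Vert s_{\sigma_0}\Vert_{\sigma_0}(z)
&\;\ge\; -B(z)\,d^{a}(\log^+\Vert s\Vert +d)+\log\Vert s\Vert_Y\\
&\;\ge\; -B(z)\,d^{a}(\log^+\Vert s\Vert +d)+\log\Vert s_{\sigma_0}\Vert_{\sigma_0}(z_0)\\
&\;\ge\; -\bigl(B(z)+A_0\bigr)\,d^{a}(\log^+\Vert s\Vert +d).
\end{align*}
This is exactly the condition defining $S_a(X_K)$ at $z$, with constant $A(z):=B(z)+A_0$, so $z\in S_a(X_K)\cap Y$, proving the inclusion.

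I should double-check that the two sup-norm manipulations used above make sense, i.e.\ that both $\log\Vert s\Vert_Y$ and $\log\Vert s\Vert_{\sigma_0}(z_0)$ are genuine real numbers and not $-\infty$. This is where the hypothesis that $Y$ is arithmetically generic enters: no non zero $s\in H^0(\cX,\cL^d)$ vanishes identically on $Y$, so $\log\Vert s\Vert_Y\in\bR$; and since $z_0\in S_a(X_K)$ it is in particular arithmetically generic (cf.\ the remarks following Definition \ref{SNpoints}), so $s(z_0)\neq 0$ and $\log\Vert s\Vert_{\sigma_0}(z_0)\in\bR$. There is no serious obstacle; the only content of the corollary is the observation that the extra term $\log\Vert s\Vert_Y$ appearing in Definition \ref{SNpoints2} can be traded, via a single point $z_0\in Y\cap S_a(X_K)$, for a constant of the right order $d^{a}(\log^+\Vert s\Vert+d)$, so that the weaker $S_a(Y)$-estimate is promoted to the stronger $S_a(X_K)$-estimate everywhere on the full set $S_a(Y)\subseteq Y$.
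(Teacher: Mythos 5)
Your proof is correct and is the natural argument: the ``if'' direction is trivial non-emptiness of a full set, and for ``only if'' you chain the $S_a(Y)$-inequality through the crutch point $z_0\in Y\cap S_a(X_K)$ to absorb the $\log\Vert s\Vert_Y$ term, showing $S_a(Y)\subseteq S_a(X_K)\cap Y$, after which Theorem \ref{SNongenericcurves} supplies fullness. The paper leaves the proof of the corollary implicit, but this is exactly the reasoning it has in mind, and your sanity checks (that $Y$ being arithmetically generic keeps $\log\Vert s\Vert_Y$ finite, and that $z_0\in S_a(X_K)$ is automatically arithmetically generic so $s(z_0)\neq0$) are the right points to verify.
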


Thus, we find the following important principle:

\
 
{\it Either an arithmetic generic curve is contained in the complementary of $S_a(X_K)$, or its intersection of it with $S_a(X_K)$ is almost all the curve itself. }

\

In order to prove Theorem \ref{SNongenericcurves}, we may suppose that $a=N+1$.

The proof of Theorem \ref{SNongenericcurves} requires three lemmas of different nature. The first needed Lemma is Lemma \ref{interpolation}.

The second Lemma is a consequence of the following Theorem due to Sadullaev (we present it here in the form we need):

\begin{theorem}\label{sadullaev1} Let $Z\subset \C^n$ be an analytic sub variety of dimension one. Let $U_1\subset U_2$ be two relatively compact open sets of $Z$. Then the followings are equivalent:

a) $Z$ is an open set of an affine curve (that means that $Z$ is algebraic);

b) There exists a constant $C$ (depending only on the $U_i$'s and $n$) with the following property: For ever polynomial $P(z)=P(z_1,\dots , z_n)\in \C[z_1;\dots, z_n]$ we have that
$$\sup_{z\in U_2}\{ |P(z)|\}\leq C^{\deg(P)}\sup_{z\in U_1}\{ |P(z)|\}.$$
\end{theorem}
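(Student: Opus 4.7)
The plan is to prove the two implications separately. The direction (a) $\Rightarrow$ (b) is a classical Bernstein--Walsh type estimate lifted to a Riemann surface; the direction (b) $\Rightarrow$ (a) is the deep part of Sadullaev's theorem and requires pluripotential theory.

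For (a) $\Rightarrow$ (b): I would take the Zariski closure $\bar Z \subset \mathbb{P}^n$ of $Z$ and its normalization $\pi : \widetilde Z \to \bar Z$, a compact Riemann surface (possibly disconnected). A polynomial $P \in \C[z_1,\dots,z_n]_{\leq d}$ extends to a section of $\mathcal{O}_{\mathbb{P}^n}(d)$ and pulls back to a section of the line bundle $\mathcal{L}_d := \pi^\ast\mathcal{O}_{\mathbb{P}^n}(d)|_{\bar Z}$ on $\widetilde Z$, of degree $d \cdot \deg(\bar Z)$. Lifting the opens to $\widetilde U_1 \subset \widetilde U_2 \subset \widetilde Z$, it is then enough to establish on the fixed compact Riemann surface $\widetilde Z$ a Bernstein--Walsh inequality of the form
\begin{equation*}
\sup_{\widetilde U_2} \|s\| \;\leq\; A^{e} \sup_{\widetilde U_1} \|s\|
\end{equation*}
for every line bundle $\mathcal M$ of degree $e$ and every $s \in H^0(\widetilde Z, \mathcal M)$, with $A$ depending only on $\widetilde U_1, \widetilde U_2$ and the choice of metric. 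Such an inequality reduces, via Poisson--Jensen applied to local trivializations, to a standard Green function estimate on $\widetilde Z$: the number of zeros of $s$ in $\widetilde Z$ is bounded by $e$, and each contributes at most a fixed multiplicative factor, growing to the power $e$.

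For (b) $\Rightarrow$ (a): I would introduce the Siciak extremal function
\begin{equation*}
V_{U_1}(z) \;:=\; \sup\Bigl\{ \tfrac{1}{\deg P}\log|P(z)| \;:\; P \in \C[z_1,\dots,z_n],\ \|P\|_{U_1}\leq 1\Bigr\}
\end{equation*}
and its upper semi-continuous regularization $V^\ast_{U_1}$. Condition (b) translates exactly into $V^\ast_{U_1} \leq \log C$ on $U_2$. Now $V^\ast_{U_1}$ lies in the Lelong class $\mathcal L(\C^n)$ of plurisubharmonic functions of logarithmic growth at infinity, on which the Bedford--Taylor Monge--Ampère operator $(dd^c\cdot)^n$ is well defined. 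The heart of Sadullaev's argument is that local boundedness of $V^\ast_{U_1}$ along a pure one-dimensional analytic set $Z$ forces $Z$ to have finite projective mass, which one expresses as a bound on the Hilbert function of the restriction map $\C[z_1,\dots,z_n]_{\leq d} \to \mathcal{O}(Z)$, linear in $d$. A linear Hilbert function characterizes algebraic curves by the standard dimension theory of graded algebras, so $Z$ must be contained in an algebraic curve.

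The main obstacle is the implication (b) $\Rightarrow$ (a): the statement gives information only along $Z$, yet we must conclude about the way $Z$ is placed inside the ambient affine space. This is precisely where one cannot avoid pluripotential theory (Lelong class, Bedford--Taylor product, extremal functions), and this is also the reason, noted in the introduction, why a straightforward $p$-adic analogue of the theorem is not available. For the applications in the present paper only the direction (a) $\Rightarrow$ (b) will be used explicitly (to control $B_C(r_0,r_1,L^d)$ through polynomial approximation), so one could alternatively invoke the harder direction as a black box from \cite{SADULLAEV}.
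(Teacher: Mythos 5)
The paper gives no proof of this theorem: the statement is followed immediately by the citation to \cite{SADULLAEV} Theorem 2.2, and what the paper actually uses from it is only the easy direction (a)$\Rightarrow$(b), repackaged as Lemma \ref{sadullaev11}. Your outline of (a)$\Rightarrow$(b) via Zariski closure in $\bP^n$, normalization, and a Bernstein--Walsh inequality on the resulting compact Riemann surface is a sound plan; the one thing to repair is that the inequality $\sup_{\widetilde U_2}\Vert s\Vert \leq A^{e}\sup_{\widetilde U_1}\Vert s\Vert$ should be asserted for the powers $\cL^d$ of one fixed metrized ample line bundle, not for ``every line bundle $\cM$ of degree $e$'': a degree-$e$ line bundle varies in the Jacobian and carries no uniform choice of metric, so a uniform constant $A$ is not meaningful at that level of generality. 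Restricted to $\cL^d=\pi^\ast\cO_{\bP^n}(d)|_{\overline Z}$ with Fubini--Study metric the argument goes through.

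Your outline of (b)$\Rightarrow$(a) has a genuine gap. In the case that matters ($n\geq 2$), the set $U_1$ lies on a proper analytic subvariety of $\C^n$ and is therefore pluripolar, so the upper semicontinuous regularization $V^\ast_{U_1}$ taken in $\C^n$ is identically $+\infty$; it does \emph{not} belong to the Lelong class $\cL(\C^n)$, and the ambient Bedford--Taylor Monge--Amp\`ere machinery you invoke is vacuous here. What condition (b) actually bounds is the unregularized function $V_{U_1}$ restricted to $Z$. The whole difficulty of Sadullaev's theorem --- the step you call only ``forces $Z$ to have finite projective mass'' --- is to develop an extremal-function theory intrinsic to the analytic set $Z$, show that local boundedness of $V_{U_1}|_Z$ propagates to a logarithmic growth estimate near infinity, deduce that the current of integration $[Z]$ has finite mass in $\bP^n$, and then close $Z$ up analytically (Bishop) and conclude algebraicity (Chow). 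None of this is in the proposal, and the concluding Hilbert-function step also needs justification: a linear bound on the dimension of the image of $\C[z_1,\dots,z_n]_{\leq d}\to\cO(Z)$ shows $Z$ is \emph{contained} in an algebraic curve, but passing from containment to $Z$ being an open subset of that curve requires an additional comparison of the analytic and algebraic structures. As you note yourself, only (a)$\Rightarrow$(b) is used downstream, and in the paper the correct move is simply to cite \cite{SADULLAEV}, which is exactly what the paper does.
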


For a proof, cf. \cite{SADULLAEV} Theorem 2.2.

From theorem \ref{sadullaev1} we deduce the following:

\begin{lemma}\label{sadullaev11} Let $Y$ be a compact Riemann surface equipped with an hermitian line bundle $L$. Let $U\subset Y$ be a (non compact) simply connected open set whose complementary has non empty interior. Fix a biholomorphic isomorphism $\varphi_U:\Delta_1\to U$. Fix two positive integers $0<r_0<r_1<1$. Then we can find a constant $C_1$ such that, for every positive integer $d$ and every $s\in H^(Y, L^d)$ we have that
\begin{equation}\varphi_U^\ast(s)\in B_{C_1}(r_0,r_1,L^d|_U).\end{equation}
\end{lemma}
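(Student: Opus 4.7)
The plan is to reduce the claim to Sadullaev's Theorem \ref{sadullaev1} applied to polynomials on an affine chart of a projective embedding of $Y$.

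Fix an integer $m$ large enough that $L^m$ is very ample and the induced embedding $\Psi\colon Y\hookrightarrow \bP^N$ (with $\Psi^*\cO(1)\cong L^m$) is projectively normal, so that the restriction map $H^0(\bP^N,\cO(k))\twoheadrightarrow H^0(Y,L^{mk})$ is surjective for every $k\ge 1$. Set $K:=\overline{\varphi_U(\Delta_{r_1})}\subset U$, a compact subset of $Y$. Since by hypothesis $Y\setminus U$ has non-empty interior, there is a non-empty open subset $V\subset Y\setminus K$. Enlarging $m$ if necessary, one can find a section $\sigma\in H^0(Y,L^m)$ with $\div(\sigma)\subset V$, so that $\sigma$ is nowhere vanishing on $K$. (For $\deg L^m$ large this is a standard consequence of Abel--Jacobi: the image of the symmetric power $V^{(\deg L^m)}$ in $\mathrm{Pic}^{\deg L^m}(Y)$ is an open subset of this compact connected complex torus that, for $\deg L^m$ sufficiently large, exhausts the whole torus.) The hyperplane $H_\infty\subset \bP^N$ defined by $\sigma$ then avoids $\Psi(K)$, so $\Psi(K)$ lies inside the affine chart $\bP^N\setminus H_\infty\cong \C^N$.

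In this chart, $\Psi(Y)\cap \C^N$ is an affine algebraic curve, and the nested subsets $\Psi(\varphi_U(\Delta_{r_0}))\subset \Psi(\varphi_U(\Delta_{r_1}))$ are relatively compact open subsets of it. Sadullaev's Theorem \ref{sadullaev1} yields a constant $C>0$ such that for every polynomial $P$ of degree $k$ on $\C^N$,
\[ \sup_{\Psi(\varphi_U(\Delta_{r_1}))}|P| \le C^k \sup_{\Psi(\varphi_U(\Delta_{r_0}))}|P|. \]
For $s\in H^0(Y,L^{mk})$, the surjectivity of the restriction map gives a homogeneous polynomial of degree $k$ on $\bP^N$ whose restriction to $\Psi(Y)$ is $s$; dehomogenizing by the linear form defining $H_\infty$ (equivalently, dividing by $\sigma^k$) produces a polynomial $P_s$ on $\C^N$ of degree at most $k$ with $|P_s(\Psi(q))|=\Vert s(q)\Vert/\Vert \sigma(q)\Vert^k$ for every $q\in Y\setminus\div(\sigma)$.

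Plugging $P_s$ into Sadullaev's estimate, and using that $\Vert \varphi_U^*\sigma\Vert$ is continuous and strictly positive on the compact set $\overline{\Delta_{r_1}}$ (hence bounded between two positive constants), together with the analogous boundedness of the hermitian metric on $\varphi_U^*L$ over $\overline{\Delta_{r_1}}$, one obtains a constant $C_1$ depending only on $Y$, $L$, $r_0$, $r_1$ and $\varphi_U$ such that $\Vert \varphi_U^*s\Vert_{r_1}\le C_1^{mk}\Vert \varphi_U^*s\Vert_{r_0}$ for every $k\ge 1$ and every $s\in H^0(Y,L^{mk})$. For arbitrary $d$, I apply this bound to $s^m\in H^0(Y,L^{md})$ for any $s\in H^0(Y,L^d)$ and extract an $m$-th root to obtain $\Vert \varphi_U^*s\Vert_{r_1}\le C_1^d\Vert \varphi_U^*s\Vert_{r_0}$, i.e., $\varphi_U^*s\in B_{C_1}(r_0,r_1,L^d|_U)$. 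The main technical hurdle is the construction of $\sigma$, which crucially uses the algebraicity of $Y$ together with the non-empty interior of $Y\setminus U$; without both ingredients, Sadullaev's polynomial-type estimate on $\C^N$ could not be brought to bear on arbitrary sections of $L^d$.
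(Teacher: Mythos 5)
Your proof is correct, but it takes a genuinely different route from the paper's. The paper first establishes a reduction lemma: if $M=L(D)$ for an effective divisor $D$ (with compatible metrics) and the Lemma holds for $M$, then it holds for $L$; the key to this reduction is a Jensen-formula estimate $\Vert s\Vert_{r_1}\cdot\Vert s_D\Vert^d_{r_1}\leq C_0^d\Vert s\cdot s_D^d\Vert_{r_1}$, which is subtler than it looks since $s_D$ may vanish inside $\Delta_{r_1}$ and a naive sup-norm comparison therefore fails. Armed with this reduction, the paper simply picks a point $P\notin U$ and takes $M=\cO(nP)$ for $n$ large (Riemann--Roch gives an effective $D$ with $\cO(nP)\cong L(D)$); then the hyperplane at infinity for the embedding by $|\cO(nP)|$ is cut out by the section with divisor $nP$, automatically avoids $U$, and Sadullaev's theorem applies directly. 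You instead keep $L$ fixed, embed by $|L^m|$ for $m$ large, construct via Abel--Jacobi surjectivity a section $\sigma\in H^0(Y,L^m)$ whose divisor avoids $K=\overline{\varphi_U(\Delta_{r_1})}$, use projective normality to lift sections of $L^{mk}$ to homogeneous polynomials, apply Sadullaev in the resulting affine chart, and finally pass from multiples of $m$ to arbitrary $d$ by the $m$-th power trick, using $\Vert\varphi_U^*(s^m)\Vert_{r_i}=\Vert\varphi_U^*s\Vert_{r_i}^m$. What you gain: the $m$-th power trick is elementary and avoids the Jensen-formula step entirely. What you pay: the Abel--Jacobi construction of $\sigma$ and the appeal to projective normality are heavier machinery than the paper's device of merely choosing $P\notin U$ and then freely choosing $D$. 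Two minor remarks: (i) the hypothesis that $Y\setminus U$ has non-empty interior is not actually needed in your argument, since $r_1<1$ already makes $U\setminus K$ a non-empty open set that can serve as $V$; (ii) the surjectivity of $V^{(n)}\to\mathrm{Pic}^n(Y)$ for $n\gg 0$ is correct but deserves a word of justification — e.g.\ the Zariski closure of the image of $V^k$ in $\mathrm{Jac}(Y)$ stabilizes to a subgroup, which must be all of $\mathrm{Jac}(Y)$ since $V$ generates it; then for $k$ large the image contains a dense open $W$, and $W+W=\mathrm{Jac}(Y)$ because $W\cap(x-W)\neq\emptyset$ for every $x$.
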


The definition of  $B_{C_1}(r_0,r_1,L^d_U)$ is given in \ref{definition of B}.

\begin{proof} We first claim that, if there exists  an hermitian line bundle $M$ and an effective divisor $D$ on $Y$ such that $M=L(D)$ (we fix an hermitian metric on $\cO(D)$ in such a way that the equality is an isometry) and the Lemma holds for $M$, then the Lemma holds for $L$. Indeed, let $s\in H^0(Y,L^d)$ then , denoting by  $s_D\in H^0(Y, \cO(D)$ a section such that $\div(s_D)=D$, we have that $s\cdot   s_D^d\in H^0(Y,M^d)$. Consequently, if we denote by $\Vert s\Vert_{r_i}$ the norm $\sup_{z\in\Delta_{r_i}}\{\Vert s\Vert(z)\}$,  since the Lemma holds for $M$, we have
\begin{equation}
\Vert s\cdot s_D^d\Vert_{r_1}\leq C^d\Vert s\cdot s_D^d\Vert_{r_0}\leq C^d\Vert s\Vert_{r_0}\cdot \Vert s_D\Vert^d_{r_0}.
\end{equation}
A standard application of Jensen formula gives  the existence of a constant $C_0$ such that 
$\Vert s\Vert_{r_1}\cdot\Vert s_D\Vert^d_{r_1}\leq C_0^d\cdot\Vert s\cdot s_D^d\Vert_{r_1}$. The claim follows.  

Thus we may suppose that $L=\cO(nP)$ for $P\not\in U$ and $n$ sufficiently big. Consequently $Y$ is embedded in the projective space $\P^N$, the  line bundle $L$ is the restriction to $Y$ of the tautological line bundle $\cO(1)$ and that there is an hyperplane $H$ which do not intersect $U$. Let $V:=\P^N\setminus H$ and $Z=Y\cap V$. We are in position to apply Theorem \ref{sadullaev1} and conclude by choosing the suitable constants needed to compare the involved metrics.\end{proof}

From this we deduce:

\begin{lemma}\label{sadullaev2} Let $Y\subset X_{\sigma_0}$ be an algebraic curve (not necessarily defined over $\overline{K}$). Let $U\subset Y$ be a (non compact) simply connected open set whose complementary has non empty interior. Fix a biholomorphic isomorphism $\varphi_U:\Delta_1\to U$. Fix two positive integers $0<r_0<r_1<1$. Then we can find a constant $C_1$ such that, for every positive integer $d$ and every $s\in H^0(X_{\sigma_0}, \cL_{\sigma_0}^d)$ we have that
$$\varphi_U^\ast(s)\in B_{C_1}(r_0,r_1,\cL^d|_U).$$
\end{lemma}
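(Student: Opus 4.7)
The plan is to reduce to Lemma \ref{sadullaev11} by passing to the normalization of $Y$. First, since $U$ is biholomorphic to a disk it is connected, hence contained in a single irreducible component of $Y$; I will replace $Y$ by that component and so assume $Y$ irreducible. Being algebraic and closed in the projective variety $X_{\sigma_0}$, $Y$ is then projective. I will take the normalization $\pi:\tilde Y\to Y$ (a finite morphism from a compact Riemann surface to $Y$) and pull back the hermitian line bundle: $L:=(\iota\circ\pi)^\ast\cL_{\sigma_0}$, where $\iota:Y\hookrightarrow X_{\sigma_0}$ is the inclusion.

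The key observation is that $U$ lies entirely in the smooth locus of $Y$: being biholomorphic to $\Delta_1$ it is a complex manifold, whereas the singular points of a reduced algebraic curve are never manifold points. Consequently $\pi$ restricts to a biholomorphism $\tilde U:=\pi^{-1}(U)\to U$, and setting $\tilde\varphi_U:=\pi^{-1}\circ\varphi_U:\Delta_1\to\tilde U$ gives a biholomorphism onto $\tilde U$. I then have to check that $\tilde U$ satisfies the hypotheses of Lemma \ref{sadullaev11}: simple connectivity and non-compactness are inherited from $U$, while the complement $\tilde Y\setminus\tilde U=\pi^{-1}(Y\setminus U)$ inherits non-empty interior from $Y\setminus U$, since $\pi$ is surjective and a local homeomorphism off finitely many points.

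Once these hypotheses are verified, Lemma \ref{sadullaev11} applied to $(\tilde Y,L,\tilde U,\tilde\varphi_U,r_0,r_1)$ will yield a constant $C_1$ such that $\tilde\varphi_U^\ast(t)\in B_{C_1}(r_0,r_1,L^d|_{\tilde U})$ for every positive integer $d$ and every $t\in H^0(\tilde Y,L^d)$. To conclude, given $s\in H^0(X_{\sigma_0},\cL_{\sigma_0}^d)$ I will set $t:=(\iota\circ\pi)^\ast s\in H^0(\tilde Y,L^d)$ and use the identity $\pi\circ\tilde\varphi_U=\varphi_U$ to obtain $\varphi_U^\ast s=\tilde\varphi_U^\ast t$. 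Together with the tautological isometric identification of $\cL_{\sigma_0}^d|_U$ with $L^d|_{\tilde U}$ provided by $\pi^\ast$, this gives the required conclusion $\varphi_U^\ast s\in B_{C_1}(r_0,r_1,\cL^d|_U)$.

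I do not foresee a serious obstacle: all the analytic content is concentrated in Lemma \ref{sadullaev11}, and the present statement is essentially a formal reduction through normalization. The only delicate point is to confirm that $U$ does not meet the singular locus of $Y$, so that pulling back and descending through $\pi$ cause no loss near $U$; once that is observed, the rest is bookkeeping.
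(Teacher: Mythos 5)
Your proof is correct, and it is essentially the deduction the paper intends (the paper states only ``From this we deduce:'' without writing the argument out). The route through the normalization is the natural formal reduction: you observe that $U$, being biholomorphic to $\Delta_1$, is contained in the smooth locus of (an irreducible component of) $Y$, pass to the smooth projective model $\tilde Y$, pull back $\cL_{\sigma_0}$ and $s$, check that $\tilde U=\pi^{-1}(U)$ inherits the hypotheses of Lemma~\ref{sadullaev11}, and transport the inequality back via the isometric identification of $L^d|_{\tilde U}$ with $\cL^d|_U$. All the steps hold.

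One phrasing point is worth correcting: you assert that ``singular points of a reduced algebraic curve are never manifold points.'' This is false for \emph{topological} manifolds --- a cuspidal point is a topological manifold point (the normalization map $t\mapsto(t^2,t^3)$ is a homeomorphism onto its image). What is true, and what your argument actually uses, is that singular points of a reduced curve are never \emph{smooth analytic} points, i.e.\ the curve is not locally biholomorphic to a domain in $\C$ there; equivalently, the local ring is not a DVR. Since $U\cong\Delta_1$ as analytic spaces, every point of $U$ is a smooth point of $Y$, which is exactly what you need to conclude $U\cap\mathrm{Sing}(Y)=\emptyset$ and to make $\pi|_{\tilde U}$ a biholomorphism. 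The substance of the argument is unaffected; only the wording should be tightened.
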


The last lemma is of topological nature. We would like to thank T. Delzant who provided the main idea of the proof of it.

\begin{lemma}\label{coverings} Let $Z$ be a compact Riemann surface. Then we can find a finite set of coverings $\cU_j=\{U_{ij}\}$, $j=1,\dots, r$ with the following properties:

a) Each set $\cU_j:=\{ U_{ij}\}$ is finite.

b) Each $U_{ij}$ is non compact and simply connected. 

c) Each time we fix  (for every $i$ and $j$) a holomorphic isomorphism  $\varphi_{ij}:\Delta_{1}\to U_{ij}$, we can find a real number $0<r_0<1$ for which the following holds: denote by $W_{ij}$ the open set $\varphi_{ij}^{-1}(\Delta_{r_0})$. Then, for each $j$, $Z=\cup_iW_{ij}$ (this means that, for $j$ fixed, the open sets $W_{ij}$ still cover $Z$).

d) For every $z\in Z$ there is a $j_z\in\{ 1\dots, n\}$ such that $z\in \cap_{i}W_{ij_z}$.
\end{lemma}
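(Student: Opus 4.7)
The plan is to prescribe each cover $\cU_j$ individually around a basepoint $z_j$ of $Z$, arranging that every member of $\cU_j$ contains a common small disk around $z_j$; since $Z$ is compact, finitely many basepoints suffice, and every $z\in Z$ will then lie in $\bigcap_i W_{ij_z}$ for the index $j_z$ of the basepoint whose disk contains $z$.

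Concretely, first fix a Riemannian metric on $Z$ and choose a finite cover $\{D_i\}_{i=1}^m$ of $Z$ by small proper simply connected geodesic disks, together with a relatively compact shrinking $\overline{D'_i}\subset D_i$ that still covers $Z$ (standard shrinking lemma for finite open covers of a normal space). Then choose finitely many basepoints $z_1,\dots,z_n\in Z$ and small nested disks $\overline{B_j}\subset B^+_j$ so that $\{B_j\}_{j=1}^n$ also covers $Z$. For each pair $(i,j)$, join $B^+_j$ and $D_i$ by a thin tubular neighborhood $T_{ij}$ of a simple arc $\gamma_{ij}$ connecting $\partial B^+_j$ to $\partial D_i$ inside $Z\setminus(B^+_j\cup D_i)$, and put
\[
U_{ij}:=B^+_j\cup T_{ij}\cup D_i.
\]
Being two open disks joined by a rectangular tube, $U_{ij}$ is homeomorphic to an open disk, hence simply connected; keeping the arcs and tubes small enough we also ensure that $Z\setminus U_{ij}$ contains a small open ball. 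By uniformization $U_{ij}$ is biholomorphic to $\Delta_1$ or to $\C$; the latter case would force $Z=\bP^1$ with $Z\setminus U_{ij}$ reduced to a single point, which we have excluded. Hence $U_{ij}\simeq\Delta_1$ and biholomorphisms $\varphi_{ij}\colon\Delta_1\to U_{ij}$ exist as demanded by (a) and (b). Analogously shrunk pieces give $V_{ij}:=D'_i\cup T'_{ij}\cup B_j$ with $\overline{V_{ij}}\subset U_{ij}$ and $\bigcup_i V_{ij}\supset\bigcup_i D'_i=Z$.

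Once arbitrary $\varphi_{ij}$'s are fixed, both compacts $\overline{V_{ij}}$ and $\overline{B_j}$ lie in $U_{ij}$, so their preimages under $\varphi_{ij}^{-1}$ are relatively compact in $\Delta_1$ and contained in some $\Delta_{r_{ij}}$ with $r_{ij}<1$; set $r_0:=\max_{i,j}r_{ij}<1$ (finite maximum). Then $W_{ij}=\varphi_{ij}(\Delta_{r_0})$ contains both $V_{ij}$ and $B_j$, which gives condition (c) via $\bigcup_i W_{ij}\supset\bigcup_i V_{ij}=Z$, and condition (d) via $z\in B_j\Longrightarrow z\in B_j\subset\bigcap_i W_{ij}$.

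I expect the main technical obstacle to be ensuring simultaneously that each $U_{ij}$ is simply connected, properly contained in $Z$, and hyperbolic: the topological picture of a ``disk--tube--disk'' is straightforward, but one has to verify that the arcs $\gamma_{ij}$ can be chosen embedded in $Z$, that the tubes $T_{ij}$ stay thin enough to preserve properness, and that $Z\setminus U_{ij}$ retains non-empty interior, the last point being exactly what rules out the parabolic case $U_{ij}\simeq\C$ that could a priori arise when $Z=\bP^1$. Once this local construction is in place, conditions (c) and (d) follow from a single compactness argument on the finite family $\{U_{ij}\}$.
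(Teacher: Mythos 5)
Your proof is correct and it reaches the same conclusion by a genuinely different route from the paper. The paper starts by asserting (without an explicit construction) a single ``master'' covering $\cU_0=\{U_i\}$ whose shrunken members $V_i$ still cover $Z$ \emph{and} have a common intersection point $z_0$; it then fixes, for each $z\in Z$, a diffeomorphism $f_z\colon Z\to Z$ with $f_z(z)=z_0$, and obtains the coverings $\cU_h$ by pulling $\cU_0$ back along finitely many of the $f_{z_h}$. This diffeomorphism trick (credited to Delzant) is the key device you do not use; it reduces the whole problem to building \emph{one} cover with a common core. Your approach instead builds each $\cU_j$ directly, anchoring it at a basepoint $z_j$ and joining every small geodesic disk $D_i$ to a fixed disk $B_j^+$ around $z_j$ by a thin tube $T_{ij}$. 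This makes explicit exactly the kind of ``disk--tube--disk'' construction that the paper's unproved existence of $\cU_0$ silently relies on, and it also addresses head-on the hyperbolicity of the pieces (ruling out $U_{ij}\simeq\C$ by keeping the complement open), which the paper passes over. The trade-off: your construction does more work ($n$ covers built from scratch rather than one transported by diffeomorphisms) and must verify embeddedness of the arcs and thinness of the tubes, and you should also say a word about the degenerate case where $B_j^+$ and $D_i$ already overlap (then no tube is needed and one takes a slightly larger common disk); but it is more self-contained and fills a genuine gap left by the paper's assertion that $\cap_i V_i\neq\emptyset$ can be arranged.
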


\begin{proof} We can find a finite convering $\cU_0=\{ U_i\}$ with the following property:

-- Each $U_i$ is simply connected and non compact. Each $U_i$ contains a open set $V_i$ such that $\overline V_i\subset U_i$,  and  $Z=\cup_i V_i$. 

-- The intersection $\cap_i V_i$ is non empty.




Denote by $V:=\cap_i V_i$. Fix a point $z_0\in V$. 

For each $z\in Z$ we fix a diffeomorphism $f_z:Z\to Z$ such that $f(z)=z_0$. Hence $Z=\cup_{z\in Z} f_z^{-1}(V)$.  Since $Z$ is compact, we can find a finite set $z_1,\dots, z_n$ such that $Z=\cup_{h=1}^n f_{z_h}^{-1}(V)$. For each $h=1,\dots, n$, denote by  $U_{ih}$ (resp. $V_{ih}$) the open set $f_{z_h}^{-1}(U_i)$  (resp.  $f_{z_h}^{-1}(V_i)$) and by  $\cU_h$ the covering $\{ f_{z_h}^{-1}(U_i)\}$.

Since each $f_h$ is a diffeomorphism, all the $U_{ih}$ are simply connected and non compact. Since  $Z=\cup_{z\in Z} f_z^{-1}(V)$, for every $z\in Z$, there exists $h_z$ such that $z\in \cap_i V_{ih_z}$. 

For every $i$ and $h$ fix a holomorphic isomorphism $\varphi_{ih}:U_{ih}\to \Delta_1$. Since $\overline V_{ih}\subset U_{ih}$, for every $i$ and $h$, there is a real $0<r_{ih}<1$ such that $V_{ih}\subset \varphi_{ih}^{-1}(\Delta_{r_{ih}})$.

Let $r_0=\max\{r_{ih}\}$ and put $W_{ih}=\varphi_{ih}^{-1}(\Delta_{r_0})$. 

It is easy to verify that properties (c) and (d) are verified by $r_0$ and the collection of coverings $\cU_h$,  $h=1,\dots, n$. \end{proof}

We can now prove Theorem \ref{SNongenericcurves}.

\begin{proof} ({\it of Theorem \ref{SNongenericcurves})} We fix  a finite set of coverings $\cU_j$ of $Y$ which verify Lemma \ref{coverings}. 

For every positive integer $d$, positive real constants $B_i$ and  non zero section $s\in H^0(\cX, \cL_{\sigma_0}^d)$ denote by $V_Y(s, B_1, B_2)$ the area of the set $\{ z\in Y\;/\; \Vert s\Vert_{\sigma_0}\leq {{\Vert s\Vert_Y}\over{\Vert s\Vert^{B_1d^{n+1}}_+B_2^{d^{n+2}}}}\}.$

We now claim the following: 

For every sufficiently positive constant $C_0$ we can find constants $C_i>1$ depending only on $Y$, the arithmetic polarization  and the constants $B_i$ for which  the following holds:

For every positive integer $d$ and non zero section $s\in H^0(\cX, \cL_{\sigma_0}^d)$, we have
\begin{equation}\label{eq:area}V_Y(s,B_1,B_2)\leq {{C_0}\over
{\Vert s\Vert_+^{C_1d^{n}}C_2^{d^{n+1}}}}.\end{equation}

\

{\it Proof of the claim: } Fix a finite family of coverings as in Lemma \ref{coverings}. Let $r_0$ be the involved real number (Property (c)). Fix a real number $r_1$ such that $0<r_0<r_1<1$.   Let $s\in H^0(Y, \cL_{\sigma_0}^d)$.  Let $z_0\in Y$ such that $\sup_{z\in Y}\{\Vert s\Vert_{\sigma_0}\}=\Vert s\Vert_{\sigma_0}(z_0)$.  We can find one of the coverings $\cU_j$ with the property that $z_0$ belongs to all the open sets of the covering. We apply Lemma \ref{sadullaev2} and Lemma \ref{interpolation} to each of the open sets of the involved covering with $\epsilon =1/(\Vert s\Vert^{B_1d^{n+1}}_+B_2^{d^{n+2}})$ and the claim follows. 

An important issue which follows from the proof of the claim (and that will be used in the following) is that $lim_{B_i\to +\infty}C_j=+\infty$. By this we mean that, up to increase the $B_i$ if necessary, we may suppose that the $C_j$ are as big as we want. 

We are now in position to conclude the proof of the Theorem. The conclusion will be a direct application of Borel--Cantelli Lemma \ref{borelcantelli}.

Fix constants $B_i$' sufficiently big (how big will be fixed at the end of the proof). By the claim above we have that (for a suitable constant $C_3$):
\begin{eqnarray*}
&&\sum_{d=1}^\infty\sum_{s\in H^0(\cX;\cL^d)}V(s, B_1,B_2) \\
&&\leq \sum_{d=0}^\infty\sum_{s\in H^0(\cX;\cL^d)} {{C_0}\over
{\Vert s\Vert_+^{C_1d^{n}}C_2^{d^{n+1}}}} \;\;\;\;\;\;\;\text {by  formula \ref{eq:area}}\\
&& =\sum_{d=1}^\infty\sum_{N=1}^\infty\sum_{  {s\in H^0(\cX;\cL^d)}\atop
{N\leq \Vert s\Vert_+<N+1}}  {{C_0}\over
{\Vert s\Vert_+^{C_1d^{n}}C_2^{d^{n+1}}}}\\
&&\leq \sum_{d=1}^\infty\sum_{N=1}^\infty{{C_0C_3^{d^{n+1}}N^{d^n}}\over{N^{C_1d^n}C_2^{d^{n+1}}}}\;\; \;\;\;\;\;\;\;\;\;\;\;\;\;\;\text {by formula \ref{eq:counting1}}
\end{eqnarray*}

and the last series converges as soon as $C_2$ is sufficiently big and $C_1>1$. By the Borel Cantelli lemma \ref{borelcantelli} the conclusion follows. \end{proof}

\section{$S_a $ points are full if $a\geq N+1$}.

We suppose that we are in the same hypotheses of the previous section. 

Given a compact sub variety $Y$ of $X_\sigma$, as Theorem \ref{liouville4} shows,  the term $\log\Vert s\Vert_Y$ may be,  in general, quite difficult to control. It is possible that it is much smaller thet the term $\log^+\Vert s\Vert$. Never the less we will now show that the set $S_a(X_K)$ is a full set in $X_{\sigma_0}(\C)$.

\begin{theorem}\label{SNFULL} If $a\geq N+1$, the set $S_a(X_K)$ is full in $X_{\sigma_0}(\C)$.
\end{theorem}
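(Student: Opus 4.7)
The strategy is the Fubini reduction to one-dimensional fibres combined with Borel--Cantelli, in parallel with the proof of Theorem \ref{SNongenericcurves}.

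Step 1 (fibration setup). Up to replacing $\cX$ by a blow-up, which by Proposition \ref{independence1} does not alter $S_a(X_K)$, I would fix a projective morphism $\pi:\cX\to\cB$ onto a smooth projective model $\cB$ of dimension $N-1$ with generic fibre a smooth curve, e.g.\ coming from a generic linear projection $X_K\hookrightarrow\bP^M\dra\bP^{N-1}$ or from a Lefschetz-type pencil. By Proposition \ref{fibration and generic}, the fibre $X_b$ over an arithmetically generic $b\in B_{\sigma_0}(\C)$ is a smooth arithmetically generic algebraic curve in $X_{\sigma_0}$, to which Lemmas \ref{sadullaev2} and \ref{interpolation} apply. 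Applying the topological argument of Lemma \ref{coverings} to $\cB$ and pulling back to $\cX$ yields a finite family of coverings $\{\cU_j\}$ of $X_{\sigma_0}$ together with shrunk subsets, such that every $z\in X_{\sigma_0}$ has its base projection $\pi(z)$ sitting in the intersection of all shrunk open sets of some covering.

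Step 2 (key volume estimate). Analogously to the claim within the proof of Theorem \ref{SNongenericcurves}, I would prove that for $a=N+1$ and all sufficiently large positive constants $B_1, B_2$ there exist $C_0$ and constants $C_1, C_2$ (as large as one wishes for $B_1, B_2$ correspondingly large) such that, for every $d\geq 1$ and every non-zero $s\in H^0(\cX,\cL^d)$,
\begin{equation}\label{planbound}
\mu\bigl(\{z\in X_{\sigma_0}\colon \Vert s\Vert_{\sigma_0}(z)\leq \Vert s\Vert_+^{-B_1 d^{N+1}}B_2^{-d^{N+2}}\}\bigr)\leq \frac{C_0}{\Vert s\Vert_+^{C_1 d^{N}}C_2^{d^{N+1}}}.
\end{equation}
Let $z_0\in X_{\sigma_0}$ maximise $\Vert s\Vert_{\sigma_0}$ and let $\cU_{j_0}$ be the covering of Step 1 whose shrunk charts all contain $\pi(z_0)$. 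Fubini decomposes the $N$-dimensional measure on each chart $\pi^{-1}(U_{ij_0})$ as an integral over $U_{ij_0}\subset B_{\sigma_0}$ of a fibrewise one-dimensional area. For each transversal parameter $z'$, Lemma \ref{sadullaev2} (with constants uniform in $z'$ by compactness of the family of algebraic fibres) followed by Lemma \ref{interpolation} bound the fibrewise area by $C\beta d\bigl(\epsilon/\Vert s\Vert_{X_{z'},r_0}\bigr)^{2/\beta d}$. Integrating over $z'$ then requires controlling the negative moments $\int\Vert s\Vert_{X_{z'},r_0}^{-2/\beta d}\,dz'$, which is achieved by a Skoda-type integrability bound exploiting that $z'\mapsto\log\Vert s\Vert_{X_{z'},r_0}$ is plurisubharmonic, bounded above by $\log\Vert s\Vert_+$, and attains a value $\geq\log\Vert s_{\sigma_0}\Vert_{\sigma_0}$ at the base projection of the maximising point $z_0$.

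Step 3 (Borel--Cantelli). Summing \eqref{planbound} against the counting estimate \eqref{eq:counting1} over $s\in H^0(\cX,\cL^d)\setminus\{0\}$ and then over $d\geq 1$ yields a convergent double series for $B_1, B_2$ sufficiently large. Lemma \ref{borelcantelli} then implies that almost every $z\in X_{\sigma_0}$ lies outside the bad set for all but finitely many pairs $(d,s)$; absorbing this finite exceptional set into a constant $A(z)$ yields inequality \eqref{liouville Sa points}, hence $z\in S_a(X_K)$.

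The main obstacle is Step 2: passing from the one-dimensional bound of Lemma \ref{interpolation} to the $N$-dimensional estimate \eqref{planbound} requires integrating the fibrewise area against the potentially very small quantity $\Vert s\Vert_{X_{z'},r_0}$. Bounding $\int\Vert s\Vert_{X_{z'},r_0}^{-\alpha}\,dz'$ uniformly in $s$ and $d$ through plurisubharmonic analysis is the technical heart of the argument, and is also what accounts for the hypothesis $a\geq N+1$ (rather than the sharper $a\geq N$): each of the $N-1$ transversal integrations inflates the degree bound by one power of $d$.
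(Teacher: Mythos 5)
Your proposal diverges from the paper's proof at the crucial step, and the divergence leaves a genuine gap. The paper does \emph{not} attempt a direct $N$-dimensional volume estimate of the bad set. Instead it runs an induction on $\dim X_K$: taking a smooth divisor $D_K=\div(s_0)$, it fixes an anchor point $z_0\in S_N(D_K)$ (full in $D_{\sigma_0}$ by induction), and proves Lemma \ref{reductionlemma}: for any arithmetically generic curve $Y$ through $z_0$, one has $\log\Vert s\Vert_Y\geq -Ad^N(\log^+\Vert s\Vert+d)$. The point of that lemma is precisely to control the quantity you call $\Vert s\Vert_{X_{z'},r_0}$ on the fibres $Y=Y_{z_0}=h^{-1}(h(z_0))$ of a fibration $h:X_K\to B$ that is generically finite on $D_K$; it does so by factoring $s=s_0^\alpha s_1$ with $s_1(z_0)\neq 0$, invoking the $S_N$ inequality for $s_1$ at $z_0$, and then applying Nevanlinna's first main theorem on a disk in $Y$ to convert a lower bound on a jet of $s|_Y$ at $z_0$ into a lower bound on $\sup_Y\Vert s\Vert$. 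Only then is Theorem \ref{SNongenericcurves} applied fibrewise, and Fubini concludes (crucially using Proposition \ref{fibrations and Sa} to know that $h_{D_K}(S_N(D_K))$ is full in $B$).

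Your plan replaces this mechanism with a Skoda-type negative-moment estimate $\int\Vert s\Vert_{X_{z'},r_0}^{-2/\beta d}\,dz'$, which is not established, and this is not a routine technical step. Two obstacles are concrete: (i) it is not clear that $z'\mapsto\log\Vert s\Vert_{X_{z'},r_0}$ is plurisubharmonic, since it is the supremum of $\log\Vert s\Vert$ over a compact piece of a moving fibre rather than a fibrewise integral or a proper direct image quantity; (ii) even granting plurisubharmonicity, the applicability of Skoda depends on uniform control of Lelong numbers over all $s$ and all $d$, which is exactly the quantity one does not have a priori and which the paper manufactures via the inductive anchor point and Nevanlinna. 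In addition, the observed power $d^{N+1}$ in the paper comes from the one extra application of the Liouville-type bound on the divisor (a single step of the dimension induction), not from ``$N-1$ transversal integrations each inflating the degree by one power of $d$'' as your heuristic suggests. To repair the plan you would either need to prove the integrability claim, or switch to the paper's two-step scheme: establish the lower bound for $\Vert s\Vert_Y$ on fibres through an $S_N(D_K)$ anchor (your Step 2 replaced by Lemma \ref{reductionlemma}), and only then quote Theorem \ref{SNongenericcurves} plus Fubini.
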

\begin{proof} The proof is by induction on the dimension of the variety. If $\dim(X_K)$ is one, Theorem \ref{SNongenericcurves} apply directly and the conclusion follows in this case. Thus, by induction,  we may suppose  that $N=\dim(X_K)\geq 2$, that $a=N+1$, and that,  for every generically smooth divisor $D$ of $X_K$, the set $S_{N}(D_\sigma)$ is full in $D_\sigma(\C)$.

Fix a section $s_0\in H^0(\cX, \cL)$ and denote by $\cD\subset\cX$ the divisor $\div(s_0)$. By Bertini theorem we may suppose that its generic fiber $D_K$ is smooth.  Fix a point $z_0\in S_{N}(D_K)$.  The set of these points is full in $D_{\sigma_0}(\C)$  by inductive hypothesis. 

Let $Y\subset X_{\sigma_0}(\C)$ be a closed arithmetically generic  curve passing through $z$.

The proof will be a consequence of this lemma, the proof of which will be postponed to the end of the proof. 

\begin{lemma}\label{reductionlemma} There is a constant $A$,  depending only on $Y$ and $z_0$ for which the following holds: For every positive integer $d$ and non vanishing section $s\in H^0(\cX,\cL^d)$ we have that
$$\log\Vert s\Vert_Y\geq -Ad^N(\log^+\Vert s\Vert +d).$$
\end{lemma}

Let's show how the lemma implies the theorem. Theorem \ref{SNongenericcurves} and Lemma \ref{reductionlemma} imply that, if $z_0\in Y$ and $Y$ is arithmetically generic, then $S_{N+1}(X_K)\cap Y$ is full in $Y$. Choose a fibration by curves $h:X_K\to B$ for which the induced morphism $h_{D_K}:D_K\to B$ is generically finite. The image $h_{D_K}(S_{N}(D_K))$ will be full in $B$ and by Proposition \ref{fibrations and Sa} contained in $S_{N}(B_K)$.  Consequently for every $z_0\in S_{N}(D_K)$, if we denote by $Y_{z_0}$ the fibre $h^{-1}(h(z_0))$, we have that $S_{N+1}(X_K)\cap Y_{z_0}$ is full in $Y_{z_0}$. Fubini Theorem implies the conclusion of the Theorem. \end{proof}

\begin{proof} {(\it of Lemma \ref{reductionlemma})} Let $s\in H^0(\cX,\cL^d)$ a non vanishing section. 
If $s(z_0)\neq 0$ then, since $z_0\in S_{N}(D_K)$ and the restriction of $s$ to $D_K$ is non zero, we have that 
\begin{equation}
\log\Vert s\Vert(z_0)\geq -Ad^{N}(\log^+\Vert s\Vert_{D_K} +d)\geq -Ad^{N}(\log^+\Vert s\Vert+d)
\end{equation}
for a suitable constant $A$ independent on $s$ and $d$. Thus, in this case, the conclusion of the Lemma holds for $s$ (because $z_0\in Y$).

Since $Y$ is arithmetically generic, the restriction of $D_{\sigma_0}$ to $Y$ is not identically zero. Thus, the restriction of $\div(s_0)$ to $Y$ is an effective divisor vanishing on $z_0$. Denote by $b$ the multiplicity  at $z_0$ of $\div(s_0)|_Y$ and by $J^b(s_0)$  the $b$--th jet at $z_0$ of the section $s_0|_Y$.

Suppose now that $s(z_0)=0$. This implies that we can find a constant $\alpha\leq d$ such that $\div(s)=\alpha D+D_1$ with $D_1$ not vanishing on $z_0$. Indeed, since $z_0$ is arithmetically generic on $D$, every non zero global section of $\cL^d|_D$ will not vanish on $z_0$. Consequently the order of vanishing at $z_0$ of the restriction of $s$ to $Y$ is $\alpha b$. Denote by $J^{\alpha b}(s)$ the $\alpha b$--th jet of $s|_Y$ in $z$. 

Denote by $s_1\in H^0(\cX,\cL^{d-\alpha})$ the global section $s/s_0$. Since $s_1$ do not vanish at $z$, as before, we have that

\begin{equation}\label{eq:estimatesfors} 
\log\Vert s_1\Vert_{\sigma_0}(z_0)\geq -Ad^{N}(\log^+\Vert s_1\Vert +d)
\end{equation}
for a suitable constant $A$ independent on $s_1$.
A local computation give the existence of a constant $C_1$ such that
\begin{equation}\label{eq:jetnorms} \log\Vert J^{\alpha b}(s)\Vert (z)\geq \log\Vert J^b( s_0)\Vert(z_0)+\log\Vert s_1\Vert_{\sigma_0}(z)+C_1d.
\end{equation}

Thus, from \ref{eq:estimatesfors} and \ref{eq:jetnorms} we conclude that, we can find a constant $A_2$ independent on the section $s$, such that
$$\log\Vert J^{\alpha b}(s)\Vert (z_0)\geq -A_2d^{N}(\log^+\Vert s_1\Vert +d).$$
which, together with \ref{eq:supestimate}, gives the existence of a constant $A_3$ (independent on $s$) such that
\begin{equation}\label{eq:jetestimate1} \log\Vert J^{\alpha b}(s)\Vert (z_0)\geq -A_3d^{n}(\log^+\Vert s\Vert +d).
\end{equation}
Let $\Delta_1\subset Y$ be a holomorphic disk centered in $z$ and with coordinate $\zeta$. By Nevanlinna first main theorem applied  to the inclusion $\Delta_1\hookrightarrow X_{\sigma_0}(\C)$ we obtain that there exists a constant $C$ such that
$$d\cdot C +\int_{|\zeta|=1}\log\Vert s\Vert_{\sigma_0} d\theta\geq \log\Vert J^{ab}(s)\Vert (z).$$
Thus there exists  a point $\zeta_0\in Y$ such that 
$$\log\Vert s\Vert (\zeta_0)\geq  -A_3d^{N}(\log^+\Vert s\Vert +d).$$ \end{proof}

\

We would like to propose the following conjecture which, if true, may imply further interesting speculations:

\begin{conjecture} Let $X$ be a smooth projective variety defined over a number field of dimension $n$ defined over a number field. Let $a\geq N$ be a real number. Then the set of points of type $S_a(X)$ is full in $X(\C)$. \end{conjecture}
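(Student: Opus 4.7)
The plan is to proceed by induction on $N = \dim(X_K)$ exactly as in Theorem \ref{SNFULL}, so that the real work reduces to the one-dimensional case: show that on an arithmetically generic smooth projective curve $Y$, the set of points $z \in Y$ satisfying the inequality
\[
\log\Vert s\Vert_{\sigma_0}(z) \geq -A d^{N}(\log^+\Vert s\Vert + d)
\]
for all nonzero $s \in H^0(\cX,\cL^d)$ and all $d$ is full in $Y$. Once the curve case with exponent $N$ is established, the induction step of the proof of Theorem \ref{SNFULL} (slicing by a generic smooth divisor, applying Proposition \ref{fibrations and Sa} and Lemma \ref{reductionlemma}, then Fubini) goes through verbatim, since all it really uses is the curve statement applied to fibers of a generically finite fibration.

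On the curve, I would run the same Borel--Cantelli machine as in Theorem \ref{SNongenericcurves}: combine Sadullaev's criterion (Lemma \ref{sadullaev2}) to pass from a simply-connected piece of $Y$ to a disk, then apply the interpolation bound of Lemma \ref{interpolation} to control the area of the set where $\Vert s\Vert$ is smaller than $\Vert s\Vert_Y \cdot \epsilon$. With $\epsilon = \exp(-Ad^{N}(\log^+\Vert s\Vert + d))$, the area per section is at most roughly $C_1 \beta d \cdot \epsilon^{2/(\beta d)} \sim \exp(-cd^{N-1}(\log^+\Vert s\Vert + d))$. The obstacle is that the total number of integer sections of sup norm $\leq T$ is, by \eqref{eq:counting1}, of order $C^{d^{N+1}} T^{d^N}$, and summing this product over $d$ and $T$ only converges when the area decay beats $C^{d^{N+1}}$, which forces $a \geq N+1$.

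To close the gap of a factor $d$ in the exponent, the plan is to avoid summing crudely over all integer sections and instead exploit the lattice structure of $H^0(\cX,\cL^d)$. Concretely, I would fix an $L^2$-orthonormal basis of $H^0(\cX_{\sigma_0},\cL^d_{\sigma_0})$, use Gromov's comparison \eqref{gromov} with the sup norm, and argue that the set of integer sections $s$ that are "$\epsilon$-small at $z$" is contained in a thin strip around a hyperplane in $H^0(\cX,\cL^d) \otimes \bR$; by Minkowski-type counting (or a successive minima argument), the number of such lattice points is much smaller than the naive $C^{d^{N+1}} T^{d^N}$, ideally by a full factor $C^{d^{N}}$. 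Combined with the interpolation area estimate, this should give a convergent Borel--Cantelli sum already at $a = N$.

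The main obstacle is precisely this refined lattice-point count. The exponent $d^{N+1}$ in the cardinality \eqref{eq:counting1} is arithmetic-volume in nature and reflects that $\cL$ is \emph{arithmetically} big, whereas the area estimate is of geometric origin and sees only $d^{N}$ — this mismatch is exactly the dichotomy the author identifies as the source of the conjectural gap. Making the Minkowski strip argument above unconditional would in effect require either an arithmetic equidistribution statement for integer sections of $\cL^d$ near a generic transcendental point, or a sharper interpolation estimate at the critical scale; one or the other is very likely unavoidable, which is presumably why the statement is left as a conjecture rather than a theorem.
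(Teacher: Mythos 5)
This statement is explicitly a \emph{conjecture} in the paper: the author proves fullness of $S_a(X)$ only for $a\geq N+1$ (Theorem \ref{SNFULL}) and leaves the range $N\leq a<N+1$ open. So there is no proof in the paper to compare against, and what you have written is, by your own admission in the last paragraph, not a proof either -- it is a correct diagnosis of why the paper's method stops at $a\geq N+1$, followed by a speculative proposal to push further. You have correctly understood what is being conjectured and correctly identified the induction structure of Theorem \ref{SNFULL}; that part is sound.

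Two substantive remarks on the diagnosis and the proposed fix. First, the obstruction at $a=N$ is slightly worse than ``a factor of $d$ in the exponent'' eaten by $C^{d^{N+1}}$. Writing the per-section area as roughly $T^{-cd^{a-1}}\exp(-cd^a)$ for sections of norm $\approx T$, and the count as $C^{d^{N+1}}T^{d^N}$, the $T$-sum $\sum_T T^{\,d^N - c\,d^{a-1}}$ already \emph{diverges} at $a=N$ for all $d>c$, independently of the $C^{d^{N+1}}$ factor: the exponent $d^N - c\,d^{N-1}$ becomes positive. So a refined count must win in both the $d$-exponent \emph{and} the $T$-exponent, not just kill $C^{d^{N+1}}$. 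Second, your Minkowski-strip idea is, after Fubini, the same computation reorganized: $\sum_s\mu(A_{s,d}) = \int_Y \#\{s : z\in A_{s,d}\}\,d\mu(z)$, so bounding the number of bad lattice sections at a fixed $z$ is exactly bounding the integrand. A pointwise lattice-point bound of the shape ``$\epsilon^2 T^{m-2}/\mathrm{covol} + (\text{error})$'' is plausible for a generic $z$, but the error terms for thin slabs scale with the surface area and the successive minima of the lattice, and controlling them uniformly in $z$ is precisely where new arithmetic input (equidistribution of small sections near transcendental points, or a sharpened interpolation estimate at the critical exponent) would be needed. You say as much yourself, and I agree: as written, the argument does not close the gap, and the statement remains a conjecture.
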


Even if not stated in this language, the conjecture above is proved, for instance in \cite{BUGEAUDBOOK} when $X=\P^1$.

\section{Some applications to rational points on analytic disks}

\

Even if this paper is more about transcendental points on algebraic varieties, we will see how this theory can be an interesting tool to study rational points. 

As before, we suppose that $X$ is a smooth projective variety of  dimension $N>1$ defined over a number field $K$. We fix an arithmetic polarization $(\cX,\cL)$ of $X$. 

Let $\sigma$ be an infinite place of $K$  and 
$f:\Delta_1\to X_\sigma$ be an analytic map ($\Delta_1$ being the unit disk in $\C$). We are interested on studying the set $f^{-1}(X(K))$. 

For every positive real numbers $T$, and $r<1$ we introduce the set
\begin{equation}
S_r(f,T):=\{ z\in \Delta\; /\; |z|<r\; ; \; f(z)\in X(K)\; {\rm and}\; h_\cL(f(z))\leq T\}
\end{equation}
and we denote by $C_r(f,T)$ its cardinality. 

In this section we are interested in estimating $C_r(f,T)$ when $T$ goes to infinity. 

In the classical paper \cite{BOMBIERIPILA}, authors show that, for every positive $\epsilon$ we have an estimate of the form $C_r,f,T)\ll \exp(\epsilon T)$ and examples by Pila \cite{PILA} and Surroca \cite{surroca}(independently) show that, in general, one cannot hope better then this. 

If some conditions are given on $f$ then the bound by Bombieri and Pila can be drastically improved. A huge literature on this topic is available, cf. for instance \cite{BOXALL}, \cite{COMTEYOMDIN}, \cite{GMW}, \cite{MASSER}.

In this section we will show how points of type $S_a$ may be used to control the growth of rational points in the image of the analytic map $f$. We think that this circle of ideas may be expanded and we hope that we will improve this in a future paper.

Problems similar to this have been studied by many authors: for instance Lang treat the case of maps analytic maps of $\C$ in his classical book \cite{LANG}. Two  papers which inspire this one are the already quoted \cite{MASSER} and \cite{BOXALL}.

The main theorem of this section is

\begin{theorem}\label{counting2} Suppose that there is $z_0\in\Delta_1$ such that $f(z_0)\in S_a(X)$.  Then, for every $\epsilon>0$ and $\gamma>{{1}\over{N}}$ there exists a constant $A=A(\cX,\cL, r,f,\epsilon, \alpha, \gamma)$ such that, if $T\geq A$, we have
\begin{equation}
C_f(r,T)\leq \epsilon T^{1+\gamma(a+1)}.
\end{equation}
\end{theorem}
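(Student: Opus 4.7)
The plan is to argue by contradiction in the style of a transcendence proof: assume $M := C_f(r,T) > \epsilon\, T^{1+\gamma(a+1)}$ for arbitrarily large $T$, and combine (i) a Siegel-type construction of an integral section of $\cL^d$ vanishing on the rational points, (ii) a Jensen upper bound on its pullback to $\Delta_1$, and (iii) the $S_a(X)$ Liouville lower bound at $f(z_0)$. After a M\"obius reparametrization of $\Delta_1$ I may assume $z_0 = 0$ (this only replaces $r$ by some $r''<1$ and modifies the implicit constants); let $p_1,\ldots,p_M \in X(K)$ be the distinct rational points in $f(S_r(f,T))$, each of height $h_\cL(p_i) \leq T$, and let $z_1,\ldots,z_M \in \Delta_r$ with $f(z_i) = p_i$.

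For step (i), set $d := \lceil T^\gamma \rceil$ and apply the auxiliary Siegel lemma~\ref{vanishingsmallsections}, obtained from Lemma~\ref{siegel} applied to the evaluation map $\ev\colon H^0(\cX,\cL^d) \to \bigoplus_{i=1}^{M} p_i^{*}(\cL^d)$. The source has rank $\asymp d^N$ with a basis of sup-norm at most $1$ (arithmetic ampleness of $\cL$ for $d$ large), while the target is a direct sum of hermitian line bundles of Arakelov degree $\leq dT$, so $\widehat{\mu}_{\max} \leq dT$. The lemma then produces a non-zero $s \in H^0(\cX,\cL^d)$ with $s(p_i) = 0$ for all $i$ and
$$\log\Vert s\Vert_+ \;=\; O(T^{\gamma+1}),$$
the Siegel defect factor $M/(d^N - M)$ being controllable precisely because $\gamma > 1/N$.

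For step (ii), trivialize $f^{*}\cL^d = \cO_{\Delta_1}\cdot e^d$ on the simply connected disk and write $f^{*}s = g\cdot e^d$ with $g$ holomorphic on $\Delta_1$; then $g(z_i) = 0$ for every $i$. Pick $r'$ with $r < r' < 1$. Jensen's formula on $\Delta_{r'}$ yields
\begin{equation*}
\log|g(0)| \;\leq\; \log\Vert g\Vert_{r'} - \sum_{i=1}^{M}\log(r'/|z_i|) \;\leq\; \log\Vert g\Vert_{r'} - M\log(r'/r),
\end{equation*}
and since $\log\|e^d\|$ contributes only an $O(d)$ correction, this transfers to the hermitian norm as
\begin{equation*}
\log\Vert s\Vert_{\sigma}(f(0)) \;\leq\; \log\Vert s\Vert_+ - M\log(r'/r) + O(d).
\end{equation*}
For step (iii), Definition~\ref{SNpoints} applied to $f(0) \in S_a(X)$ provides $A = A(f(0),\cL,a)$ with
\begin{equation*}
\log\Vert s\Vert_{\sigma}(f(0)) \;\geq\; -A\,d^a\bigl(\log^+\Vert s\Vert_+ + d\bigr).
\end{equation*}

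Chaining the two bounds and inserting $\log\Vert s\Vert_+ = O(T^{\gamma+1})$ together with $d \asymp T^\gamma$ gives
\begin{equation*}
M\log(r'/r) \;\leq\; (1 + A\,d^a)\log^+\Vert s\Vert_+ + A\,d^{a+1} + O(d) \;\lesssim\; T^{\gamma(a+1)+1},
\end{equation*}
so $M \leq \epsilon\, T^{1+\gamma(a+1)}$ for $T$ larger than a threshold depending on $(\cX,\cL,r,f,\epsilon,\gamma,a)$, contradicting our standing assumption. The main obstacle is concentrated in step~(i): producing a non-zero section of $\cL^d$ vanishing on all of $f(S_r(f,T))$ with $\log\Vert s\Vert_+$ of the optimal order $O(T^{\gamma+1})$. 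This requires a careful choice of $d \asymp T^\gamma$ that plays the Siegel defect $M/(d^N-M)$ against the $d^a$ amplification in the Liouville inequality, using crucially $\gamma > 1/N$ (rather than $\gamma = 1/N$), and is precisely the content of the auxiliary result~\ref{vanishingsmallsections}.
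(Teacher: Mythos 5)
Your proof of Theorem~\ref{counting2} follows the same structure as the paper: construct a small section $s\in H^0(\cX,\cL^d)$ vanishing on $S_r(f,T)$ via Lemma~\ref{vanishingsmallsections}, pull back and apply Jensen (equivalently, Nevanlinna's first main theorem) on a disk $\Delta_{r'}$ to count zeros, and plug in the $S_a$ lower bound at $f(z_0)$ to control $\log\Vert s\Vert(f(0))$. The exponent bookkeeping $M\log(r'/r)\lesssim d^a\log^+\Vert s\Vert_+ + d^{a+1}\asymp T^{(a+1)\gamma+1}$ and the final absorption of constants into $\epsilon$ (by shrinking the $\epsilon_0$ in the lemma) agree with the paper. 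Used as a black box, Lemma~\ref{vanishingsmallsections} makes your argument go through.

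However, your parenthetical description of how Lemma~\ref{vanishingsmallsections} is ``obtained'' contains a genuine gap. You propose applying Siegel directly to the evaluation map $H^0(\cX,\cL^d)\to\bigoplus_{i=1}^{M}p_i^*(\cL^d)$ onto \emph{all} $M$ rational points with $d\asymp T^\gamma$, arguing that the defect $M/(d^N-M)$ is controlled because $\gamma>1/N$. This cannot work: one does not know $M$ in advance, and in the regime being ruled out ($M\gtrsim T^{(a+1)\gamma+1}$) the kernel is trivial, since $a\geq N+1$ forces $\gamma N<1+\gamma(a+1)$, so $d^N<M$. The actual mechanism of the lemma is a two-step bootstrap: Siegel is applied only to the evaluation at a subset $H(T)\subset S_r(f,T)$ of cardinality $\asymp T^\gamma$, chosen so $d^N\asymp|H(T)|$ keeps the defect $m/n$ bounded by an absolute constant; one then shows $s$ automatically vanishes on \emph{every} remaining $p\in S_r(f,T)$ by pitting the classical Liouville inequality (Corollary~\ref{Liouville and heights}, giving $\log\Vert s\Vert(p)\gtrsim -T^{1+\gamma/N}$) against Jensen on $\Delta_{r_1}$ (which yields $\lesssim -T^\gamma$ from the zeros at $H(T)$); the contradiction requires $\gamma>N/(N-1)$, which is the real source of the constraint on $\gamma$ (the $\gamma>1/N$ in the theorem statement appears to be a slip). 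So your proof of the theorem is sound given the lemma, but the sketch you give for re-deriving the lemma would not close.
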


Observe that the hypothesis of the theorem imply that the image of $f$ is Zariski dense (over the algebraic closure of $K$). 

The main tool of the proof is the following Lemma, of independent interest, the proof of which is inspired by the proof of Proposition 2 of \cite{MASSER}.

\begin{lemma}\label{vanishingsmallsections} Let $f:\Delta_1\to X_\sigma$ be an analytic map whose image is Zariski dense. Fix $1>\epsilon_0>0$ and $\gamma>{{N}\over{N-1}}$. With the notations as above, there is a constant $A_0=A_0(\cX.\cL,f,r,\epsilon_0, \gamma)$ for which the following holds: for every $T\geq A_0$ there exists a non zero global section $s\in H^0(\cX, \cL^d)$ such that:

-- $d\leq \epsilon_0 T^{{{\gamma}\over{N}}}$;

-- $\log\Vert s\Vert \leq\epsilon_0 T^{1+{{\gamma}\over{N}}}$;

-- For every $z\in S_r(f,T)$ we have $s(f(z))=0$.
\end{lemma}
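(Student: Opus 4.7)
The strategy is to combine Siegel's Lemma (\ref{siegel}) applied to an evaluation map at rational points of bounded height, a Schwarz--Blaschke type estimate on the holomorphic disk, and the algebraic Liouville inequality (Theorem \ref{Liouvillethm}), in the spirit of \cite{MASSER}, Proposition~2.

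First I would set $d:=\lfloor \epsilon_0 T^{\gamma/N}\rfloor$. By the Hilbert polynomial and the arithmetic ampleness of $\cL$, for $T$ sufficiently large $H^0(\cX,\cL^d)$ is an $O_K$-module of rank $m\sim c_0 d^N\sim c_0\epsilon_0^N T^\gamma$ generated by sections of supremum norm $\leq 1$ (cf.\ Section \ref{notations etc}). List the rational points in the image of $S_r(f,T)$ as $P_1,\ldots,P_C$, each of $\cL$-height $\leq T$.

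I would then apply Siegel's Lemma \ref{siegel} to the $O_K$-linear evaluation map at a subset of $k\leq m/2$ of these points,
\[\mathrm{ev}\colon H^0(\cX,\cL^d)\longrightarrow \bigoplus_{i=1}^{k}P_i^*(\cL^d).\]
Both hypotheses (a) and (b) of Lemma \ref{siegel} hold with constant $1$ under the above normalization, and the bound $h_\cL(P_i)\leq T$ gives $\hat\mu_{\max}(E_2)\leq d[K:\Q]T$. Lemma \ref{siegel} then produces a non-zero $s\in\ker(\mathrm{ev})$ with
\[\log\|s\|\leq \tfrac{k}{m-k}\,d\,[K:\Q]\,T+O(\log m),\]
which is at most $\epsilon_0 T^{1+\gamma/N}$ provided $k\leq m/2$ and $\epsilon_0$ is chosen small enough to absorb $[K:\Q]$ and the constants. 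This yields the first two bullets together with vanishing of $s$ at the chosen $k$ rational points.

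To upgrade the vanishing to \emph{every} $z\in S_r(f,T)$ I would couple an analytic Schwarz--Blaschke bound with Liouville's inequality. Fixing an auxiliary $r<r'<1$ for which a uniform Blaschke bound $q=q(r,r')<1$ is available (possibly after a harmless initial shrinking of $r$ which is then absorbed into $A_0$, or via iteration on sub-disks), the vanishing of $s\circ f$ at the $k$ chosen points of $\Delta_r$ implies
\[|s(f(z))|\leq q^k\|s\circ f\|_{r'}\leq q^k B^d\|s\|\qquad (z\in\Delta_r),\]
with $B=B(f,r',\cL)$ coming from the compactness of $f(\Delta_{r'})$ and the smooth metric on $\cL$. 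Meanwhile, for any $z\in S_r(f,T)$, the point $f(z)\in X(K)$ has $\cL$-height at most $T$, so Theorem \ref{Liouvillethm} together with Corollary \ref{Liouville and heights} gives
\[\log\|s(f(z))\|\geq -[K:\Q]\,T\,(\log^+\|s\|+d)\quad\text{or}\quad s(f(z))=0.\]
Forcing the analytic upper bound strictly below the Liouville lower bound then forces $s(f(z))=0$; this translates to the sufficient condition $k|\log q|>C_1 T(\log^+\|s\|+d)$ for an explicit constant $C_1=C_1(f,r,r',\cL,[K:\Q])$.

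The crux of the proof is the delicate calibration of the parameters. Substituting $\log^+\|s\|\leq \epsilon_0 T^{1+\gamma/N}$ and $d\leq \epsilon_0 T^{\gamma/N}$ into the Liouville threshold gives a lower bound for $k$ of the form $T^{2+\gamma/N}$ (up to $\epsilon_0$-dependent factors), while the Siegel constraint imposes $k\leq m/2\sim \epsilon_0^N T^\gamma/2$. The hypothesis $\gamma>N/(N-1)$ is introduced precisely so that these two conditions on $k$ become compatible for all sufficiently large $T$, after all constants (including $[K:\Q]$, $C_1$, $c_0$) have been absorbed into $\epsilon_0$ and into the threshold $A_0=A_0(\cX,\cL,f,r,\epsilon_0,\gamma)$. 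Arranging the Blaschke estimate so that $q<1$ uniformly on $\Delta_r$ (via auxiliary shrinking or iteration) is the other place where genuine technical care is required; it is the part of the argument whose details I expect to be the main obstacle.
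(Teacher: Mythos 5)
Your overall skeleton (Siegel lemma at a batch of bounded-height rational points, then force vanishing at the remaining ones by pitting a Schwarz-type decay against a Liouville lower bound) is exactly the paper's strategy; the paper uses Jensen's formula where you use a Blaschke product, and those two steps are interchangeable here. The Blaschke estimate you worry about is in fact unproblematic: after conjugating by the Möbius automorphism of $\Delta_{r'}$ that moves $z$ to $0$, the zeros land in a compact subset of $\Delta_{r'}$ uniformly over $z\in\overline{\Delta_r}$, so a uniform $q<1$ exists for \emph{any} $r<r'<1$; no shrinking or iteration is needed.

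The genuine gap is in the Liouville step, and it breaks the stated calibration. You invoke the packaged form of Corollary \ref{Liouville and heights}, namely
$$\log\Vert s\Vert(p)\ \geq\ -[K:\Q]\,h_\cL(p)\,\bigl(\log\Vert s\Vert + d\bigr)\ \geq\ -[K:\Q]\,T\,\bigl(\log\Vert s\Vert + d\bigr).$$
With $\log\Vert s\Vert\sim T^{1+\gamma/N}$ and $d\sim T^{\gamma/N}$ this lower bound is of order $-T^{2+\gamma/N}$: the factor $T$ multiplies \emph{both} $\log\Vert s\Vert$ and $d$. Matching $k|\log q|\gtrsim T^{2+\gamma/N}$ against Siegel's cap $k\lesssim m/2\sim T^{\gamma}$ forces $\gamma\geq 2+\gamma/N$, i.e.\ $\gamma\geq \tfrac{2N}{N-1}$ — strictly stronger than the hypothesis $\gamma>\tfrac{N}{N-1}$. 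The paper avoids this by using the unpackaged inequality from the proof of Theorem \ref{Liouvillethm}:
$$\log\Vert s\Vert(p)\ \geq\ -d\,[K:\Q]\,h_\cL(p)\ -\ \bigl([K:\Q]-1\bigr)\log\Vert s\Vert\ \geq\ -d[K:\Q]T - ([K:\Q]-1)\log\Vert s\Vert,$$
in which the height $T$ multiplies only the degree $d$, not $\log\Vert s\Vert$. This is of order $-T^{1+\gamma/N}$, one power of $T$ better, and gives precisely the balance $T^{\gamma}\gg T^{1+\gamma/N}\iff\gamma>\tfrac{N}{N-1}$. You should therefore replace your Liouville input with the sharper proof-level bound; the rest of your argument then closes. (Separately, $d=\lfloor\epsilon_0 T^{\gamma/N}\rfloor$ is too large for the $\log\Vert s\Vert\leq\epsilon_0T^{1+\gamma/N}$ conclusion once $[K:\Q]>1$; choose $d$ to be a smaller constant multiple of $T^{\gamma/N}$ — you already gesture at this, but the constant must go into the definition of $d$, not into $\epsilon_0$, which is prescribed.)
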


\begin{proof} Before we start the proof, we recall the following standard fact: if we denote by $h^0(X,L^d)$ the dimension of $H^0(\cX, \cL^d)$, then,  can find a positive constant $B_1$ such that, for every $\epsilon_1>0$ and $d$ sufficiently big, we have 
\begin{equation}
B_1(1-\epsilon_1)d^n\leq h^0(X,L^d)\leq B_1(1+\epsilon_1) d^n.
\end{equation}
We  fix such a $\epsilon_1<{{1}\over{9}}$.  We also fix $\epsilon_2<(\epsilon_0/10)^N$.

We also recall that, by Zhang's theorem \cite{Zhang}, we may suppose that $H^0(\cX,\cL^d)$ is generated by sections of norm less or equal then one. 

We may suppose that, $A_0$ is so big that  for  $T\geq A_0$ we have that $\epsilon_2(1-\epsilon_1)B_1T^\gamma>1$ and $\epsilon_2^{1/N}(4^{1/N}-3^{1/N})T^{\gamma/N}>1$.

Let $A(T)$ be a positive integer such that $\epsilon_2(1-\epsilon_1)B_1T^\gamma\leq A(T)\leq 2\epsilon_2(1-\epsilon_1)B_1T^\gamma$. Choose a subset $H(T)$  of $S_r(f,T)$ of cardinality $A(T)$. 

For every positive integer $d$, denote by $V(T,d)$ the hermitian $O_K$ module $\oplus_{z\in H(T)}\cL^d|_{f(z)}$. The rank of $V(T,d)$ is $A(T,\epsilon)$ and $\widehat{\mu}_{\max}(V(T,d))\leq dT$ (the definition of $\widehat{\mu}_{\max}(\cdot)$ is given in section \ref{notations etc}).

We have a natural restriction map
\begin{equation}
\delta_T: H^0(\cX,\cL^d)\longrightarrow V(T,d).
\end{equation}

By Gromov theorem \ref{gromov}, if we put on $H^0(\cX, \cL^d)$ the $L_2$ hermitian structure and on $V(T,d)$ the direct sum hermitian structure, the logarithm of the norm of $\delta$ is bounded by $C_0d$ for a suitable constant $C_0$. 

We choose $d$ such that $3\epsilon_2 T^{\gamma}\leq d^N\leq 4\epsilon_2T^{\gamma}$. 

Denoty by $K(T)$ the kernel of $\delta_T$ and by $k(T)$ its rank. With our choices we have that
\begin{equation}
{{h^0(\cX;\cL^d)}\over{k(T)}}\leq {{B_1(1+\epsilon_1)4\epsilon_2 T^{\gamma}}\over{B_1(1-\epsilon_1)3\epsilon_2 T^{\gamma}-2\epsilon_2(1-\epsilon_1)B_1T^\gamma}}={{4(1+\epsilon_1)}\over{(1-\epsilon_1)}}<5.
\end{equation}
We may apply Siegel Lemma \ref{siegel} and we find that, for $T$ sufficiently big, there exists a non vanishing section $s\in H^0(\cX,\cL^d)$ with $d\leq (4\epsilon_2)^{1/N}T^{\gamma/n}$ such that:

-- $\log\Vert s\Vert \leq 10 \epsilon_2^{1/N}T^{1+{{\gamma}\over{N}}}$;

-- for every $z\in H(T)$ we have $s(f(z))=0$.

We will now show that, under the hypotheses above, the section $s$ we just constructed vanishes on every element of $S_r(f,T)$.

Let $z_0$ be an element of $S_r(f,T)$ which is not in $H(T)$. 

Chose $r_1>r$ such that, for every $z\in\Delta_r$ there is an automorphism $\varphi_z$ of $\Delta$ which sends $z$ in $0$ and  such that $\varphi(\Delta_r)\subset \Delta_{r_1}$. Thus  we may suppose that $z_0=0$. The reader will check, that by the compactness of $\Delta_r$, the constants which will appear in the estimates below, may be chosen independently on $z_0$ and depending only on $r$. 

Suppose that $s(f(0))\neq 0$.

By the standard Liouville inequality, Corollary \ref{Liouville and heights}, we have that
\begin{equation}
\log\Vert s\Vert (f(0))\geq -dT-([K:\Q]-1)\log\Vert s\Vert\geq -A_2T^{1+{{\gamma}\over{N}}}
\end{equation}
for a suitable constant $A_2$ independent on $z_0$. 

We  apply Nevanlinna first main theorem and we obtain:
\begin{equation}
dT_f(r_1)+\int_{|z|=r_1}\log \Vert s\Vert (r_1e^{i\theta})d\theta\geq \sum_{z\in H(T)}\log{{r_1}\over{|z|}}+\log\Vert s\Vert (f(0)).
\end{equation}
This, together with the Liouville inequality above and the properties of $s$ gives the existence of constants $A_4$ and $A_5$ depending on $r_0$ but independent on $z_0$ and on $T$, such that
\begin{equation}
A_4 T^{1+{{\gamma}\over{N}}}\geq A_5T^\gamma
\end{equation}
But this cannot hold because of our choice of $\gamma$, as soon as $T$ is sufficiently big. Consequently $s$ should vanish on $f(z_0)$ and the conclusion of the Lemma follows. \end{proof}

We can now prove Theorem \ref{counting2}: 

\begin{proof} As in the proof of \ref{vanishingsmallsections}, we may suppose that $z_0=0$.

Consider the section $s\in H^0(\cX, \cL^d)$ we constructed in Lemma \ref{vanishingsmallsections} with the involved $\epsilon_0$ sufficiently small.  

Since $f(0)\in S_a(X_K)$, we can choose $\epsilon_0$ in such a way that 
\begin{equation}
\log\Vert s\Vert (f(0))\geq -Ad^{a}(\log\Vert s\Vert+d)\geq -\epsilon_0 T^{a\gamma+1+\gamma}.
\end{equation}

By Nevanlinna First Main Theorem applied to $s$, we thus can find constants  $C_i$ independent on $T$ such that
\begin{equation}
dC_0+\int_{|z|=r_1}\log\Vert s\Vert(r_1e^{i\theta})d\theta\geq C_r(f,T)C_1-\epsilon_0 T^{\gamma(a+1)+1}
\end{equation}
The conclusion follows from our choice of $d$, the bound on $\log\Vert s\Vert$ and a suitable choice of $\epsilon_0$.\end{proof}

Theorem  \ref{counting2} tells us that, as soon as the image of an analytic map from a disk to $X$ contains an arithmetically generic point with some good transcendental properties, then it contains "few" rational points. 

We now show that a minor modification of the proof tells us that a similar result is obtained if the image intersect an effective ample divisor in a "good" point:

\begin{theorem}\label{counting3} Let $f:\Delta_1\to X(\C)$ as before. Let $a\geq N-1$ be a real number. Let $s_0\in H^0(\cL, \cL)$ be an irreducible smooth divisor. Suppose that  there is $p\in f(\Delta_r)\cap div(s_0)$ which is of type $S_a(\div(s_0))$. Then, for every $\epsilon>0$ and $\gamma\geq {{1}\over{N}}$ there exists a constant $A=A(\cX,\cL, r,f,\epsilon, \a)$ such that, if $T\geq A$ then we have
\begin{equation}
C_f(r,T)\leq \epsilon T^{\gamma(a+1)+1}.
\end{equation}
\end{theorem}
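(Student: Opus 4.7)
The strategy is to run the proof of Theorem~\ref{counting2} with the $S_a(X_K)$ inequality at an interior point replaced by the $S_a(\div(s_0))$ inequality at the boundary point $p$. After precomposing $f$ with a M\"obius automorphism of $\Delta_1$ we may assume $p=f(0)$, and by the compactness argument in the proof of Theorem~\ref{counting2} the constants below are independent of this reparametrization. For $\epsilon_0>0$ to be adjusted later, apply Lemma~\ref{vanishingsmallsections} to produce a non-zero section $s\in H^0(\cX,\cL^d)$ with $d$ and $\log^+\Vert s\Vert$ controlled by $\epsilon_0$ and $T$, and such that $s(f(z))=0$ for every $z\in S_r(f,T)$.

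Since $s_0$ is irreducible, write $s=s_0^{k}\cdot s_1$ with $0\le k\le d$ maximal and $s_1\in H^0(\cX,\cL^{d-k})$ not vanishing identically on $\div(s_0)$; inequality~\eqref{eq:supestimate} applied to this factorization gives $\log^+\Vert s_1\Vert\le \log^+\Vert s\Vert+O(d)$. A point $p\in S_a(\div(s_0))$ is, in particular, arithmetically generic on $\div(s_0)$, hence $s_1(p)\ne 0$, and applying the $S_a(\div(s_0))$ inequality to $s_1|_{\div(s_0)}\in H^0(\div(s_0),\cL^{d-k}|_{\div(s_0)})$ yields
\[
\log\Vert s_1\Vert(p)\ \ge\ \log\Vert s_1|_{\div(s_0)}\Vert(p)\ \ge\ -A\,d^{a}\bigl(\log^+\Vert s\Vert+d\bigr).
\]
Let $b$ denote the order of vanishing of $f^{\ast}s_0$ at $0$; this is a positive integer depending only on $f$ and $s_0$. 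Since $s_1(p)\ne 0$, the holomorphic function $f^{\ast}s$ vanishes at $0$ exactly to order $kb$, and a direct Taylor expansion (as in the proof of Lemma~\ref{reductionlemma}) gives
\[
\log\Vert J^{kb}(f^{\ast}s)\Vert(0)\ \ge\ \log\Vert s_1\Vert(p)+k\log\Vert J^{b}(f^{\ast}s_0)\Vert(0)-C_1 d\ \ge\ -A'\,d^{a}\bigl(\log^+\Vert s\Vert+d\bigr).
\]

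Finally, Jensen's formula applied to $f^{\ast}s$ on a disk $\Delta_{r_1}$ with $r<r_1<1$, together with the vanishing of $f^{\ast}s$ at every $z\in S_r(f,T)$, produces an estimate
\[
C\cdot C_r(f,T)\ \le\ O(d)+\int_{|z|=r_1}\log\Vert s\Vert(f(r_1e^{i\theta}))\,d\theta-\log\Vert J^{kb}(f^{\ast}s)\Vert(0),
\]
with $C=C(r,r_1)>0$, exactly as in the proof of Theorem~\ref{counting2}. Substituting the bounds from Lemma~\ref{vanishingsmallsections} and from the previous displays, and shrinking $\epsilon_0$ in terms of $\epsilon$ and $\gamma$, gives the desired inequality $C_r(f,T)\le \epsilon T^{\gamma(a+1)+1}$. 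The only new obstacle compared to Theorem~\ref{counting2} is the possible divisibility of $s$ by a high power of $s_0$, so that the $S_a(\div(s_0))$ bound cannot be applied to $s$ itself; this is resolved by the factorization $s=s_0^{k}s_1$ together with the jet computation, which transfers the lost point evaluation at $p$ to the jet $J^{kb}(f^{\ast}s)(0)$ and therefore absorbs the extra vanishing of $f^{\ast}s$ at the origin.
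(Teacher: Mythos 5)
Your proof is correct and follows essentially the same route as the paper: factor $s=s_0^{k}s_1$ with $s_1$ not vanishing identically on $\div(s_0)$, use \eqref{eq:supestimate} to pass the sup-norm bound from $s$ to $s_1$, apply the $S_a(\div(s_0))$ inequality to $s_1$ at $p$ (using that $p$ is arithmetically generic in $\div(s_0)$), and conclude by a Jensen/Nevanlinna zero count. The one cosmetic difference is that you keep $f^{\ast}s$ and compensate its vanishing at the origin through the jet $J^{kb}(f^{\ast}s)(0)$ (as in Lemma~\ref{reductionlemma}), whereas the paper's "conclusion follows as in Theorem~\ref{counting2}" amounts to applying Nevanlinna directly to $f^{\ast}s_1$; since $f^{\ast}s=(f^{\ast}s_0)^{k}f^{\ast}s_1$ and the extra factor contributes only $O(d)$ to both sides, the two are equivalent.
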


\begin{proof} Since the intersection of $f(\Delta_r)$ and $\div(s_0)$ is finite, we can consider just points which are not in $\div(s_0)$. Let $s\in H^0(\cX,\cL^d)$ be the section constructed via Lemma \ref{vanishingsmallsections}. 

Write $div(s)=\alpha div (s_0)+div(s_1)$.  By construction, the restriction of $s_1$ to $div(s_0)$ do not vanish identically. By inequality \ref{eq:supestimate} we  can find positive constant $C_1$ , $\epsilon_0$ and $\epsilon_1$ such that:
\begin{equation}
\epsilon_0 T^{{{N}\over{N-1}}+\epsilon_1}\geq \log\Vert s\Vert\geq \alpha \log\Vert s_0\Vert +(d-\alpha)C_1+\log\Vert s_1\Vert .
\end{equation}
Thus, by our choice of $d$, increasing $T$ and modifying $\epsilon_0$ if necessary, we have
\begin{equation}
 \epsilon_0 T^{{{N}\over{N-1}}+\epsilon_1}\geq\log\Vert s_1\Vert.\
\end{equation}
Since $s_1$ do not vanish on $div(s_0)$ it do not vanish on $p$ (because, in particular $p$ is arithmetically generic in $div(s_0)$). Consequently. since $p$ is of type $S^a(div(s_0))$, 
\begin{equation}
\log\Vert s_1\Vert(p)\geq -Ad^a(\log\Vert^+ s_1\Vert +d).
\end{equation}
The conclusion follows as in the proof of Theorem \ref{counting2}. \end{proof}

\end{document}